\providecommand{\U}[1]{\protect\rule{.1in}{.1in}}
\newtheorem {theorem}{Theorem}[section]
\newtheorem {corollary}[theorem]{Corollary}
\newtheorem {definition}[theorem]{Definition}
\newtheorem {example}[theorem]{Example}
\newtheorem {lemma}[theorem]{Lemma}
\newtheorem {problem}[theorem]{Problem}
\newtheorem {proposition}[theorem]{Proposition}
\newenvironment {proof}[1][Proof]{\noindent \textbf {#1.} }{\ \rule {0.5em}{0.5em}}
\begin{document}
\date{}
\title{Multiplication and composition operators on the derivative Hardy space
$S^{2}({\mathbb{D}})$ }
\author{Caixing Gu and Shuaibing Luo}
\maketitle

\begin{abstract}
In this paper we propose a different (and equivalent) norm on $S^{2}%
({\mathbb{D}})$ which consists of functions whose derivatives are in the Hardy
space of unit disk. The reproducing kernel of $S^{2}({\mathbb{D}})$ in this
norm admits an explicit form, and it is a complete Nevanlinna-Pick kernel.
Furthermore, there is a surprising connection of this norm with $3$%
-isometries. We then study composition and multiplication operators on this
space. Specifically, we obtain an
upper bound for the norm of $C_{\varphi}$ for a class of composition
operators. We completely characterize multiplication operators which are
$m$-isometries. As an application of the 3-isometry, we describe the reducing
subspaces of $M_{\varphi}$ on $S^{2}({\mathbb{D}})$ when $\varphi$ is a finite Blaschke product of order 2.

\end{abstract}

\section{Introduction}

Let ${\mathbb{D}}$ be the open unit disk of the complex plane ${\mathbb{C}}$.
Let $H({\mathbb{D)}}$ be the space of all analytic functions on ${\mathbb{D}}%
$. If $\varphi$ is an analytic self-map of ${\mathbb{D}}$, then the
composition operator $C_{\varphi}$ with symbol $\varphi$ is defined on
$H({\mathbb{D)}}$ by $C_{\varphi}f=f\circ\varphi$ for $f\in H({\mathbb{D)}}$.
The multiplication operator $M_{\psi}$ with symbol $\psi\in H({\mathbb{D)}} $
is defined by $M_{\psi}f=\psi f$ for $f\in H({\mathbb{D)}}$. The power series
of $f\in H({\mathbb{D)}}$ is (always) denoted by
\[
f(z)={\sum_{n\geq0}}f_{n}z^{n},z\in{\mathbb{D}}%
\]
Multiplication operators and composition operators on subspaces of
$H({\mathbb{D)}}$ such as the Hardy space, the Bergman space and the Dirichlet
space have been studied extensively in literature \cite{AM} \cite{CM}
\cite{EM} \cite{GH}.

The definitions of the Hardy space $H^{2}({\mathbb{D}})$ and the Bergman space
$A^{2}({\mathbb{D}})$ are standard. There are two definitions of the Dirichlet
space \cite{EM}, in this paper we take the one connected with $2$-isometries.
\begin{equation}
D^{2}({\mathbb{D)}}{\mathbb{=}}\left\{  f\in H({\mathbb{D)}}:\left\Vert
f\right\Vert _{D^{2}}^{2}=\left\Vert f\right\Vert _{H^{2}}^{2}+\left\Vert
f^{\prime}\right\Vert _{A^{2}}^{2}={\sum_{n\geq0}}\left(  n+1\right)
\left\vert f_{n}\right\vert ^{2}<\infty\right\}  .\label{defdiri}%
\end{equation}
The reproducing kernels of $H^{2}({\mathbb{D)}}$, $A^{2}({\mathbb{D)}}$, and
$D^{2}({\mathbb{D)}}$ are, respectively,
\begin{equation}
K_{w}(z)=\frac{1}{1-\overline{w}z},K_{w}(z)=\frac{1}{\left(  1-\overline
{w}z\right)  ^{2}},K_{w}(z)=\frac{1}{\overline{w}z}\ln\frac{1}{1-\overline
{w}z}.\label{kernel}%
\end{equation}

In this paper we will study composition operators and multiplication operators
on the space of analytic functions whose derivatives are in the Hardy space.
It turns out there are three (norm equivalent) definitions of this space in literature.

The first definition, denoted by $S^{2}({\mathbb{D)}}$, is defined by
\[
S^{2}({\mathbb{D)}}{\mathbb{=}}\left\{  f\in H({\mathbb{D)}}:\left\Vert
f\right\Vert _{S^{2}}^{2}=\left\vert f(0)\right\vert ^{2}+\left\Vert
f^{\prime}\right\Vert _{H^{2}}^{2}=\left\vert f(0)\right\vert ^{2}%
+{\sum_{n\geq1}}n^{2}\left\vert f_{n}\right\vert ^{2}<\infty\right\}  ,
\]
see \cite{AHP}, \cite{Heller}, \cite{Heller2}, \cite{Barbara}. Composition
operators and linear isometries on Banach space $S^{p}({\mathbb{D)}}$ were
studied in 1978 \cite{Roan78} and 1985 \cite{Novinger85}.

The second definition, which will be denoted by $S_{2}^{2}({\mathbb{D)}}$, is
defined by
\[
S_{2}^{2}({\mathbb{D)}}{\mathbb{=}}\left\{  f\in H({\mathbb{D)}}:\left\Vert
f\right\Vert _{S_{2}^{2}}^{2}=\left\Vert f\right\Vert _{H^{2}}^{2}+\left\Vert
f^{\prime}\right\Vert _{H^{2}}^{2}={\sum_{n\geq0}}(1+n^{2})\left\vert
f_{n}\right\vert ^{2}<\infty\right\}  .
\]
Korenblum in 1972 \cite{Ko72} characterized the invariant subspaces of $M_{z}
$ on $S_{2}^{2}({\mathbb{D)}}$. A recent paper \cite{CuBh} describes the
lattice of invariant subspaces of the shift plus complex Volterra operator on
the Hardy space by using Korenblum's result.

The third definition, which will be denoted by $S_{3}^{2}({\mathbb{D)}}$,
\[
S_{3}^{2}({\mathbb{D)}}=\left\{  f\in H({\mathbb{D)}}:f={\sum_{n\geq0}}%
f_{n}z^{n},{\text{ }}\left\Vert f\right\Vert _{S_{3}^{2}}^{2}={\sum_{n\geq0}%
}(1+n)^{2}\left\vert f_{n}\right\vert ^{2}<\infty\right\}  .
\]
The space $S_{3}^{2}({\mathbb{D)}}$ is just one of the Dirichlet type spaces
$D_{\alpha}$ space with $\alpha=2,$ where
\[
D_{\alpha}=\left\{  f\in H({\mathbb{D)}}:f={\sum_{n\geq0}}f_{n}z^{n},{\text{
}}\left\Vert f\right\Vert _{\alpha}^{2}={\sum_{n\geq0}}(1+n)^{\alpha
}\left\vert f_{n}\right\vert ^{2}<\infty\right\}  ,
\]
see \cite{BS84} \cite{Sh}. For $\alpha\geq0,$ the space $D_{\alpha}$ has the
complete Pick property, see Corollary 7.41 in \cite{AM} where $D_{\alpha}$ is
denoted by $H_{s}$ with $s=-\alpha.$

It is the purpose of this paper to define an equivalent norm on $S^{2}%
({\mathbb{D)}}$, and denote this new space by $S_{1}^{2}({\mathbb{D)}}$.
\begin{align}
S_{1}^{2}({\mathbb{D)}} &  {\mathbb{=}}\left\{  f\in H({\mathbb{D)}%
}:\left\Vert f\right\Vert _{S_{1}^{2}}^{2}=\left\Vert f\right\Vert _{H^{2}%
}^{2}+\frac{3}{2}\left\Vert f^{\prime}\right\Vert _{A^{2}}^{2}+\frac{1}%
{2}\left\Vert f^{\prime}\right\Vert _{H^{2}}^{2}<\infty\right\} \label{defs1}%
\\
&  =\left\{  f\in H({\mathbb{D)}}:\left\Vert f\right\Vert _{S_{1}^{2}}%
^{2}={\sum_{n\geq0}}\frac{\left(  n+1\right)  (n+2)}{2}\left\vert
f_{n}\right\vert ^{2}<\infty\right\}  .\nonumber
\end{align}
It seems appropriate to call $S_{1}^{2}({\mathbb{D)}}$ the derivative Hardy
space. There are three motivations. First, the reproducing kernel of
$S_{1}^{2}({\mathbb{D)}}$ has an "explicit" form, see Lemma \ref{rkosotd}.
Second, the reproducing kernel of
$S_{1}^{2}({\mathbb{D)}}$ is a complete Nevalinna-Pick Kernel. Third, a
surprising connection with $3$-isometries was made. Namely we will prove that
$M_{z}$ is a $3$-isometry on $S_{1}^{2}({\mathbb{D)}}$. In comparison,
$S^{2}({\mathbb{D)}}$ does not have the complete Pick property, and $M_{z}$ on
$S^{2}({\mathbb{D)}}$ is not a $3$-isometry; $S_{2}^{2}({\mathbb{D)}}$ does
not have the complete Pick property, but $M_{z}$ on $S_{2}^{2}({\mathbb{D)}}$
is a $3$-isometry; the reproducing kernel of $S_{3}^{2}({\mathbb{D)}}$ is a
complete Nevalinna-Pick Kernel, and $M_{z}$ on $S_{3}^{2}({\mathbb{D)}}$ is
also a $3$-isometry, but the reproducing kernel of $S_{3}^{2}({\mathbb{D)}}$
does not have an "explicit" form. In addition, one can define an equivalent
norm on the $D_{\alpha}$ space $(\alpha\in{\mathbb{N}}$) analogously so that
$M_{z}$ is an $m$-isometry ($m=\alpha+1$) on it. As an application of the
3-isometry $M_{z}$ on $S_{1}^{2}({\mathbb{D}})$, we describe the reducibility
of $M_{\psi}$ on $S^{2}({\mathbb{D}})$, where $\psi$ is a finite Blaschke
product of order 2.

Questions such as when multiplication operators and composition operators are
bounded or compact have the same answers with respect to equivalent norms.
Questions such as when multiplication operators and composition operators are
isometries or normal operators have different answers with respect to
equivalent norms. In Section 2, we make a few remarks about multiplication operators
on $S_{1}^{2}({\mathbb{D)}}$. In particular we present some sharp inequalities
for multiplication operators. As a consequence, we remark that $S_{1}%
^{2}({\mathbb{D)}}$ is an algebra which is known in literature. In Section 3,
we completely characterize when the multiplication operator $M_{\psi}$ on
$S_{1}^{2}({\mathbb{D)}}$ is an $m$-isometry for some $m\geq1.$ In Section 4,
by exploiting the properties of the 3-siometry $M_{\psi}$ on $S_{1}%
^{2}({\mathbb{D}})$ and the norm relation between $S_{1}^{2}({\mathbb{D}})$
and $S^{2}({\mathbb{D}})$, we obtain the reducibility of $M_{\psi}$ on
$S^{2}({\mathbb{D}})$ when $\psi$ is a finite Blaschke product of order 2. In
Section 5, we extend the norm estimates for composition operators on the
Hardy space and the Bergman space by Jury \cite{Jury} to $S_{1}^{2}%
({\mathbb{D)}}$. In the last section, we briefly remark that it is possible to
extend the results in this paper to the space of analytic functions whose
$n$-th derivatives are in the Hardy space or the Bergman space, those spaces
are equivalent to $D_{\alpha}$ with $\alpha$ being a positive integer.

\begin{lemma}
\label{rkosotd} The reproducing kernel of $S_{1}^{2}({\mathbb{D)}}$ is
\[
K_{w}(z)=\frac{2}{(\overline{w}z)^{2}}\left[ \overline{w}z+\left( \overline
{w}z-1\right) \ln\frac{1}{1-\overline{w}z}\right] .
\]

\end{lemma}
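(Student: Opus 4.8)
The plan is to compute the reproducing kernel directly from the orthonormal basis of $S_1^2(\mathbb{D})$ given by the explicit norm formula. From the expansion
\[
\|f\|_{S_1^2}^2 = \sum_{n\geq 0}\frac{(n+1)(n+2)}{2}|f_n|^2,
\]
the monomials $e_n(z) = \sqrt{2/((n+1)(n+2))}\, z^n$ form an orthonormal basis, so the reproducing kernel is
\[
K_w(z) = \sum_{n\geq 0} e_n(z)\overline{e_n(w)} = \sum_{n\geq 0} \frac{2}{(n+1)(n+2)}(\overline{w}z)^n.
\]
So the whole lemma reduces to summing this power series in closed form.

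The key computation is the identity $\sum_{n\geq 0}\frac{2}{(n+1)(n+2)}x^n = \frac{2}{x^2}\bigl[x + (x-1)\ln\frac{1}{1-x}\bigr]$ for $|x|<1$, with $x = \overline{w}z$. I would obtain this via partial fractions: $\frac{2}{(n+1)(n+2)} = \frac{2}{n+1} - \frac{2}{n+2}$. Then
\[
\sum_{n\geq 0}\frac{x^n}{n+1} = \frac{1}{x}\sum_{n\geq 0}\frac{x^{n+1}}{n+1} = -\frac{1}{x}\ln(1-x) = \frac{1}{x}\ln\frac{1}{1-x},
\]
and
\[
\sum_{n\geq 0}\frac{x^n}{n+2} = \frac{1}{x^2}\sum_{n\geq 0}\frac{x^{n+2}}{n+2} = \frac{1}{x^2}\Bigl(\ln\frac{1}{1-x} - x\Bigr).
\]
Combining, $\sum_{n\geq 0}\frac{2x^n}{(n+1)(n+2)} = \frac{2}{x}\ln\frac{1}{1-x} - \frac{2}{x^2}\ln\frac{1}{1-x} + \frac{2}{x} = \frac{2}{x^2}\bigl[x + (x-1)\ln\frac{1}{1-x}\bigr]$, which is exactly the claimed formula after substituting $x=\overline{w}z$.

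**The only real subtlety** — and it is minor — is the apparent singularity at $x=0$ (i.e. $w=0$ or $z=0$): the bracketed expression $x + (x-1)\ln\frac{1}{1-x}$ vanishes to second order as $x\to 0$, since $\ln\frac{1}{1-x} = x + x^2/2 + \cdots$ gives $x + (x-1)(x + x^2/2 + \cdots) = x - x + x^2/2 - x^2 + O(x^3) = -x^2/2 + O(x^3)$, so the ratio is analytic with value $1 = K_0(z)$, consistent with $e_0 \equiv 1$. I would note that the series representation is the primary object and the closed form is understood by analytic continuation (or simply as the limit) at $x=0$. Everything else is the routine partial-fraction and geometric-series manipulation above, so I would present it compactly.
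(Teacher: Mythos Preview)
Your approach is correct and essentially identical to the paper's: both start from the series $\sum_{n\geq 0}\frac{2}{(n+1)(n+2)}(\overline{w}z)^n$, apply the partial-fraction decomposition $\frac{2}{(n+1)(n+2)}=\frac{2}{n+1}-\frac{2}{n+2}$, and sum each piece via the logarithm series. One small arithmetic slip: in your Taylor expansion near $x=0$ the bracketed term is $+x^2/2+O(x^3)$, not $-x^2/2$ (the $x^2$ contribution from $(x-1)\ln\frac{1}{1-x}$ is $x\cdot x - 1\cdot x^2/2 = x^2/2$), so the limit is indeed $1$, matching the value $K_0(z)=1$ you state.
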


\begin{proof}
Note that for $\overline{w}z\neq0,$
\begin{align*}
K_{w}(z) &  ={\sum_{n\geq0}}\frac{2}{\left(  n+1\right)  (n+2)}(\overline
{w}z)^{n}={\sum_{n\geq0}}\left[  \frac{2}{\left(  n+1\right)  }-\frac
{2}{\left(  n+2\right)  }\right]  (\overline{w}z)^{n}\\
&  ={\sum_{n\geq0}}\frac{2}{\left(  n+1\right)  }(\overline{w}z)^{n}%
-{\sum_{n\geq0}}\frac{2}{\left(  n+2\right)  }(\overline{w}z)^{n}\\
&  =\frac{2}{\overline{w}z}\ln\frac{1}{1-\overline{w}z}-\frac{2}{\left(
\overline{w}z\right)  ^{2}}\left[  \ln\frac{1}{1-\overline{w}z}-\overline
{w}z\right] \\
&  =\frac{2}{(\overline{w}z)^{2}}\left[  \overline{w}z+\left(  \overline
{w}z-1\right)  \ln\frac{1}{1-\overline{w}z}\right]  ,
\end{align*}
and $K_{w}(z)=1$ when $\overline{w}z=0.$
\end{proof}

We remark here that by using Lemma \ref{rkosotd}, one can derive explicit formulas for the
adjoints of linear fractional composition operators $C_{\varphi}.$ Since the formulas for $C_{\varphi}^{\ast}$ are complicated and we haven't found the applications of them, we don't discuss them here.

\section{Some remarks on multiplication operators}

The fact that $D_{\alpha}$ is an algebra for $\alpha>1$ was proved
\cite{Kopp69} in 1969. The fact $S_{2}^{2}({\mathbb{D)}}$ and the analogous
$D_{n} $ for an integer $n\geq1$ are algebras were also mentioned in
\cite{Ko72} by referring to literature in Russian. The recent paper \cite{AHP}
proved that $S^{2}({\mathbb{D)}}$ is an algebra, and another recent paper
\cite{CuBh} also proved $S_{2}^{2}({\mathbb{D)}}$ is an algebra. As a
consequence, as mentioned both in \cite{Sh} and \cite{Ko72}, the maximal ideal
space of $D_{\alpha}$ for $\alpha>1$ (more precisely, the norm-equivalent
Banach algebra) is the closed unit disk. Therefore $M_{\psi} $ is bounded on
$D_{\alpha}$ if and only if $\psi\in D_{\alpha}$ and the spectrum of $M_{\psi
}$ is
\begin{equation}
\sigma(M_{\psi})=\psi(\overline{{\mathbb{D}}}).\label{spectrum}%
\end{equation}
This result in the special case $S^{2}({\mathbb{D)}}$ is also proved as
Theorem 4.3 in \cite{AHP}.

Here we will give another proof that $S_{1}^{2}({\mathbb{D)}}$ is an algebra,
since the proof is short, the constant in our inequality is sharp, and we will
use this constant in section 5.

We first note that $S_{1}^{2}({\mathbb{D}})$ contains functions in
$H({\mathbb{D}})$ whose power series have convergence radius strictly bigger
than $1.$ Since if ${\sum_{n\geq0}}\left\vert f_{n}\right\vert r^{n}<\infty$
for some $r>1,$ then $\left\vert f_{n}\right\vert \leq C(1/r^{n})$ and
\[
\left\Vert f\right\Vert _{S_{1}^{2}}^{2}={\sum_{n\geq0}}\frac{(n+1)(n+2)}%
{2}\left\vert f_{n}\right\vert ^{2}\leq C^{2}{\sum_{n\geq0}}\frac
{(n+1)(n+2)}{2r^{2n}}<\infty.
\]
Therefore $S_{1}^{2}({\mathbb{D}})$ contains functions which are analytic in a
neighborhood of the closed disk $\overline{{\mathbb{D}}},$ in particular all
rational functions whose poles are outside $\overline{{\mathbb{D}}}.$

The next proposition is similar to Proposition 2.2 in \cite{AHP}, but here our
estimates are tight which leads to a sharp constant.

\begin{proposition}
\label{sharp}Let $f\in S_{1}^{2}({\mathbb{D}}).$ Then $\left\Vert f\right\Vert
_{\infty}\leq\sqrt{2}\left\Vert f\right\Vert _{S_{1}^{2}}. $ Furthermore the
constant $\sqrt{2}$ is sharp.
\end{proposition}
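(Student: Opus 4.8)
The plan is to use the reproducing kernel description already available from Lemma \ref{rkosotd}. For a point $w\in{\mathbb{D}}$, evaluation $f\mapsto f(w)$ has norm $\|K_w\|_{S_1^2}=\sqrt{K_w(w)}$, so $|f(w)|\le \sqrt{K_w(w)}\,\|f\|_{S_1^2}$, and therefore $\|f\|_\infty\le \sup_{w\in{\mathbb{D}}}\sqrt{K_w(w)}\,\|f\|_{S_1^2}$. Thus the whole proposition reduces to the scalar estimate $\sup_{0\le t<1}K(t)\le 2$, where, writing $t=|w|^2=\overline w w\in[0,1)$,
\[
K(t)=\frac{2}{t^2}\Bigl[t+(t-1)\ln\frac{1}{1-t}\Bigr]=\frac{2}{t^2}\Bigl[t-(t-1)\ln(1-t)\Bigr].
\]
The cleanest way to see the bound is to go back to the series: $K_w(w)=\sum_{n\ge0}\frac{2}{(n+1)(n+2)}t^n$, which for $t\in[0,1)$ is a sum of positive terms strictly increasing in $t$, hence bounded above by its value at $t=1$, namely $\sum_{n\ge0}\frac{2}{(n+1)(n+2)}=2\sum_{n\ge0}\bigl(\frac1{n+1}-\frac1{n+2}\bigr)=2$. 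This gives $\|f\|_\infty\le\sqrt2\,\|f\|_{S_1^2}$ immediately, and the monotonicity argument is essentially a one-line observation, so I do not expect any real obstacle here; the only mild care needed is the passage from "$|f(w)|\le\sqrt2\,\|f\|$ for all $w\in{\mathbb{D}}$" to the supremum over $\overline{{\mathbb{D}}}$, which is automatic since $\|f\|_\infty$ on ${\mathbb{D}}$ already equals the supremum.

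For sharpness I would exhibit a sequence (or a single extremal-type function) for which the ratio $\|f\|_\infty/\|f\|_{S_1^2}$ tends to $\sqrt2$. The natural candidate is to test against normalized kernel functions: take $f=K_w/\|K_w\|_{S_1^2}$, so that $\|f\|_{S_1^2}=1$ and $f(w)=K_w(w)^{1/2}=\sqrt{K(t)}$ with $t=|w|^2$. Since $K(t)\to2$ as $t\to1^-$ (the series converges to $2$ at $t=1$, e.g. by Abel's theorem or by dominated/monotone convergence on the positive series), letting $w\to\partial{\mathbb{D}}$ along a radius gives $\|f\|_\infty\ge|f(w)|=\sqrt{K(t)}\to\sqrt2$. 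Hence the constant $\sqrt2$ cannot be improved. One should double-check that these $K_w$ genuinely lie in $S_1^2({\mathbb{D}})$ — they do, since $K_w(z)$ is a polynomial-times-logarithm expression analytic on a neighborhood of $\overline{{\mathbb{D}}}$ for fixed $w\in{\mathbb{D}}$ (indeed it is the reproducing kernel, so it is in the space by construction).

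The main point to get right, rather than a genuine difficulty, is simply the identification $\sup_{w}K_w(w)=\lim_{t\to1}K(t)=2$: one must note that the supremum over the open disk is not attained but is approached, which is exactly what also delivers sharpness, so the upper bound and the sharpness claim are two sides of the same computation. I would therefore organize the write-up as: (1) recall $|f(w)|\le\sqrt{K_w(w)}\|f\|_{S_1^2}$; (2) compute $K_w(w)=\sum 2t^n/((n+1)(n+2))$ with $t=|w|^2$ and observe it is increasing in $t$ with limit $2$; (3) conclude the inequality; (4) use normalized kernels to show $\sqrt2$ is sharp.
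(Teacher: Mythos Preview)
Your argument is correct and your upper-bound proof is essentially the paper's own: the paper first gives a Cauchy--Schwarz argument on the power series and then explicitly remarks that the reproducing-kernel inequality $|f(w)|=|\langle f,K_w\rangle|\le\|f\|_{S_1^2}\sqrt{K_w(w)}\le\sqrt2\,\|f\|_{S_1^2}$ is a shorter alternative, which is exactly what you do.

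The one small difference worth noting is in the sharpness step. You use the \emph{sequence} of normalized kernels $K_w/\|K_w\|$ with $|w|\to1$ to show the ratio tends to $\sqrt2$. The paper instead observes that the limiting object itself,
\[
f(z)=\sum_{n\ge0}\frac{2}{(n+1)(n+2)}z^n,
\]
actually lies in $S_1^2(\mathbb{D})$ (with $\|f\|_{S_1^2}=\sqrt2$) and, since its coefficients are nonnegative, attains $\|f\|_\infty=f(1)=2$. So the paper exhibits a single extremal function for which equality holds, a slightly stronger conclusion than ``the constant cannot be improved''. Your sequence converges (in $S_1^2$) to precisely this function, so you are one line away from the same statement; it would be worth adding that observation.
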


\begin{proof}
Write $f(z)={\sum_{n\geq0}}f_{n}z^{n}.$ Then by Cauchy-Schwarz inequality,
\begin{align}
\left\vert f(z)\right\vert  &  \leq{\sum_{n\geq0}}\left\vert f_{n}\right\vert
\left\vert z\right\vert ^{n}={\sum_{n\geq0}}\sqrt{\frac{(n+1)(n+2)}{2}%
}\left\vert f_{n}\right\vert \sqrt{\frac{2}{(n+1)(n+2}}\left\vert z\right\vert
^{n}\nonumber\\
&  \leq\left(  {\sum_{n\geq0}}\frac{(n+1)(n+2)}{2}\left\vert f_{n}\right\vert
^{2}\right)  ^{1/2}\left(  {\sum_{n\geq0}}\frac{2}{(n+1)(n+2)}\left\vert
z\right\vert ^{2n}\right)  ^{1/2}\label{equal}\\
&  =\left\Vert f\right\Vert _{S_{1}^{2}}K_{z}(z)^{1/2}\leq\sqrt{2}\left\Vert
f\right\Vert _{S_{1}^{2}}.\nonumber
\end{align}
The inequality in (\ref{equal}) becomes an equality (at $z=1$) if $\left\vert
f_{n}\right\vert =\frac{2}{(n+1)(n+2)}.$ Indeed, if
\begin{equation}
f(z)={\sum_{n\geq0}}\frac{2}{(n+1)(n+2)}z^{n},\label{extremal}%
\end{equation}
then $\left\Vert f\right\Vert _{\infty}=f(1)=2$ and $\left\Vert f\right\Vert
_{S_{1}^{2}}=\sqrt{2},$ which shows the constant $\sqrt{2}$ is sharp.
\end{proof}

Another short proof will be
\[
\left\vert f(w)\right\vert =\left\vert \left\langle f,K_{w}\right\rangle
\right\vert \leq\left\Vert f\right\Vert _{S_{1}^{2}}\left\Vert K_{w}%
\right\Vert _{S_{1}^{2}}\leq\left\Vert f\right\Vert _{S_{1}^{2}}K_{w}%
(w)^{1/2}\leq\sqrt{2}\left\Vert f\right\Vert _{S_{1}^{2}}.
\]
But the argument in the lemma also shows that for $\left\vert z\right\vert
\leq1,$
\begin{align*}
\left\vert f(z)-{\sum_{n=0}^{N}}f_{n}z^{n}\right\vert  &  \leq\left(
{\sum_{n\geq N+1}}\frac{(n+1)(n+2)}{2}\left\vert f_{n}\right\vert ^{2}\right)
^{1/2}\left(  {\sum_{n\geq N+1}}\frac{2}{(n+1)(n+2)}\left\vert z\right\vert
^{2n}\right)  ^{1/2}\\
&  \leq\sqrt{\frac{2}{N+2}}\left\Vert f\right\Vert _{S_{1}^{2}}.
\end{align*}
Thus $f(z)$ is the uniform limit of its partial sum, so $f(z)$ extends
continuous to the closed disk $\overline{{\mathbb{D}}}$ and $S_{1}%
^{2}({\mathbb{D}})$ is contained in the disk algebra $A({\mathbb{D}}).$
Moreover, let $A^{1}({\mathbb{D}})=\left\{  f\in A({\mathbb{D}}):f^{\prime
}(z)\in A({\mathbb{D}})\right\}  .$ Then
\[
A^{1}({\mathbb{D}})\subseteq S_{1}^{2}({\mathbb{D}})\subseteq A({\mathbb{D}}).
\]
It turns out $S_{1}^{2}({\mathbb{D}})$ is an algebra.

\begin{theorem}
\label{upperbound}The space $S_{1}^{2}({\mathbb{D}})$ is an algebra.
Furthermore if $f,g\in S_{1}^{2}({\mathbb{D}}),$ then
\[
\left\Vert fg\right\Vert _{S_{1}^{2}}\leq2\sqrt{2}\left\Vert f\right\Vert
_{S_{1}^{2}}\left\Vert g\right\Vert _{S_{1}^{2}},
\]
and the equality holds only when $f=0$ or $g=0.$
\end{theorem}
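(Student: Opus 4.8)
The plan is to argue entirely at the level of Taylor coefficients, reducing the statement to a single scalar inequality about a weighted self-convolution of the sequence $\frac{2}{(n+1)(n+2)}$. Write $h=fg$, so that $h(z)=\sum_{n\ge 0}h_nz^n$ with $h_n=\sum_{j=0}^n f_jg_{n-j}$. First I would bound each coefficient by the triangle inequality followed by a weighted Cauchy--Schwarz inequality, pairing $\sqrt{\frac{(j+1)(j+2)}{2}}\,|f_j|\cdot\sqrt{\frac{(n-j+1)(n-j+2)}{2}}\,|g_{n-j}|$ against $\sqrt{\frac{2}{(j+1)(j+2)}\cdot\frac{2}{(n-j+1)(n-j+2)}}$. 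Writing $D_n:=\sum_{j=0}^n\frac{4}{(j+1)(j+2)(n-j+1)(n-j+2)}$, this gives
\[
|h_n|^2\le D_n\sum_{j=0}^n\frac{(j+1)(j+2)}{2}\,|f_j|^2\cdot\frac{(n-j+1)(n-j+2)}{2}\,|g_{n-j}|^2 .
\]
Multiplying by $\frac{(n+1)(n+2)}{2}$, summing over $n$, and interchanging the order of summation (legitimate since all terms are nonnegative and $\sum_n\sum_{j=0}^n a_jb_{n-j}=\bigl(\sum_j a_j\bigr)\bigl(\sum_k b_k\bigr)$), I obtain
\[
\|fg\|_{S_1^2}^2\le\Bigl(\sup_{n\ge 0}C_n\Bigr)\,\|f\|_{S_1^2}^2\,\|g\|_{S_1^2}^2,\qquad C_n:=\frac{(n+1)(n+2)}{2}\,D_n .
\]
So the theorem reduces to showing $C_n\le 8$ for all $n$, with the strict inequality $C_n<8$ needed for the equality clause.

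To control $C_n$ I would evaluate $D_n$ in closed form. Using $\frac{1}{(j+1)(j+2)}=\frac1{j+1}-\frac1{j+2}$ and the analogous identity for the other factor, one expands the summand into four pieces; each of the four sums collapses under the symmetrization $\frac1a+\frac1b=\frac{a+b}{ab}$ into telescoping and harmonic sums, giving
\[
\sum_{j=0}^n\frac{1}{(j+1)(j+2)(n-j+1)(n-j+2)}=\frac{4H_{n+1}+2(n+1)}{(n+2)(n+3)(n+4)},\qquad H_m:=\sum_{k=1}^m\frac1k ,
\]
hence $C_n=\dfrac{4(n+1)\bigl(2H_{n+1}+n+1\bigr)}{(n+3)(n+4)}$. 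Thus $C_n\le 8$ is equivalent to $2(n+1)H_{n+1}\le n^2+12n+23$, and using $H_{n+1}\le 1+\ln(n+1)$ together with $2\ln(n+1)\le n+1$ (the latter since $x-2\ln x$ attains its minimum $2-2\ln 2>0$ at $x=2$), one gets
\[
2(n+1)H_{n+1}\le 2(n+1)+(n+1)\cdot 2\ln(n+1)\le 2(n+1)+(n+1)^2=n^2+4n+3<n^2+12n+23 ,
\]
so $C_n<8$ for every $n\ge 0$.

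It follows that $\|fg\|_{S_1^2}^2\le 8\,\|f\|_{S_1^2}^2\,\|g\|_{S_1^2}^2<\infty$, whence $fg\in S_1^2({\mathbb D})$, so $S_1^2({\mathbb D})$ is an algebra and $\|fg\|_{S_1^2}\le 2\sqrt2\,\|f\|_{S_1^2}\,\|g\|_{S_1^2}$. For the equality clause, put $A_n:=\sum_{j=0}^n\frac{(j+1)(j+2)}{2}|f_j|^2\cdot\frac{(n-j+1)(n-j+2)}{2}|g_{n-j}|^2\ge 0$, so that the chain above reads $\|fg\|_{S_1^2}^2\le\sum_n C_nA_n\le 8\sum_n A_n=8\|f\|_{S_1^2}^2\|g\|_{S_1^2}^2$. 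If equality holds in the theorem, then $\sum_n(8-C_n)A_n=0$; since $8-C_n>0$ and $A_n\ge 0$, every $A_n$ vanishes, hence $\|f\|_{S_1^2}^2\|g\|_{S_1^2}^2=\sum_n A_n=0$, i.e.\ $f\equiv 0$ or $g\equiv 0$.

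The step I expect to be the main obstacle is the closed-form evaluation of $D_n$ and the ensuing estimate $C_n<8$: the partial-fraction computation, while elementary, requires careful bookkeeping of the harmonic sums, and one must verify the bound on $2(n+1)H_{n+1}$ holds uniformly in $n\ge 0$ (including the degenerate case $n=0$, where $H_1=1$). Everything else — the Cauchy--Schwarz step, the interchange of summation, and the equality analysis — is then routine. One may also note that the same computation gives $C_n\to 4$, so the method in fact produces a constant strictly smaller than $2\sqrt2$; we record $2\sqrt2$ since that is the value we invoke here.
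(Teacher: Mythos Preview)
Your argument is correct and complete: the Cauchy--Schwarz step, the closed-form evaluation
\[
\sum_{j=0}^n\frac{1}{(j+1)(j+2)(n-j+1)(n-j+2)}=\frac{4H_{n+1}+2(n+1)}{(n+2)(n+3)(n+4)},
\]
the estimate $2(n+1)H_{n+1}\le n^2+4n+3<n^2+12n+23$, and the equality analysis via $\sum_n(8-C_n)A_n=0$ all check out.

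This is, however, a genuinely different route from the paper's. The paper never touches the Taylor coefficients of $fg$; instead it uses the integral form of the norm,
\[
\|fg\|_{S_1^2}^2=\|fg\|_{H^2}^2+\tfrac32\|f'g+fg'\|_{A^2}^2+\tfrac12\|f'g+fg'\|_{H^2}^2,
\]
applies the product rule, the crude bound $\|a+b\|^2\le 2\|a\|^2+2\|b\|^2$, and then the sup-norm estimate $\|f\|_\infty\le\sqrt2\,\|f\|_{S_1^2}$ (their Proposition~2.1) to pull out $\|f\|_\infty$ or $\|g\|_\infty$ from each term. This gives $\|fg\|^2\le 2\|f\|^2\|g\|_\infty^2+2\|f\|_\infty^2\|g\|^2\le 8\|f\|^2\|g\|^2$ in three lines. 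Your approach is more laborious but entirely self-contained (it does not invoke the $\|\cdot\|_\infty$ bound) and, as you observe, actually produces a strictly better constant $\sqrt{\sup_n C_n}<2\sqrt2$, whereas the paper's argument visibly loses a factor coming from $(a+b)^2\le 2a^2+2b^2$. The paper's proof, on the other hand, is short, makes clear why the constant $\sqrt2$ from Proposition~2.1 enters, and generalises immediately to the higher-order spaces $K_m^2(\mathbb D)$ discussed later.
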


\begin{proof}
Let $f,g\in S_{1}^{2}({\mathbb{D}}).$ Then
\begin{align*}
\left\Vert fg\right\Vert _{S_{1}^{2}}^{2} & =\left\Vert fg\right\Vert _{H^{2}%
}^{2}+\frac{3}{2}\left\Vert f^{\prime}g+fg^{\prime}\right\Vert _{A^{2}}%
^{2}+\frac{1}{2}\left\Vert f^{\prime}g+fg^{\prime}\right\Vert _{H^{2}}^{2}\\
& \leq\left\Vert f\right\Vert _{\infty}^{2}\left\Vert g\right\Vert _{H^{2}%
}^{2}+\|f\|_{H^{2}}^{2}\|g\|_{\infty}^{2}+\frac{3}{2}\left( 2\|f\|_{A^{2}}%
^{2}\|g\|_{\infty}^{2}+2\|f\|_{\infty}^{2}\|g\|_{A^{2}}^{2}\right) \\
& \hspace{0.5cm}+\frac{1}{2}\left( 2\|f\|_{H^{2}}^{2}\|g\|_{\infty}%
^{2}+2\|f\|_{\infty}^{2}\|g\|_{H^{2}}^{2}\right) \\
& \leq2\|f\|_{S_{1}^{2}}^{2}\|g\|_{\infty}^{2}+2\|f\|_{\infty}^{2}%
\|g\|_{S_{1}^{2}}^{2}\\
& \leq8\|f\|_{S_{1}^{2}}^{2}\|g\|_{S_{1}^{2}}^{2},
\end{align*}
where we used Proposition \ref{sharp} in the last inequality. Moreover, the
equality holds only when $f=0$ or $g=0$.
\end{proof}

It is well-known that the reproducing kernel $K_{w}(z)$ is an eigenvector of
$M_{f}^{\ast}$ since $M_{f}^{\ast}K_{w}(z)=\overline{f(w)}K_{w}(z).$ Thus
$\left\Vert M_{f}\right\Vert =\left\Vert M_{f}^{\ast}\right\Vert
\geq\left\Vert f\right\Vert _{\infty}.$ We have the following result which is
similar to Theorem 3.3 in \cite{AHP}.

\begin{theorem}
\label{norm}Let $f\in H({\mathbb{D}}).$ Then $M_{f}$ is a bounded linear
operator on $S_{1}^{2}({\mathbb{D}})$ if and only if $f\in S_{1}%
^{2}({\mathbb{D}}).$ Furthermore,
\begin{equation}
\max\left\{ \left\Vert f\right\Vert _{\infty},\left\Vert f\right\Vert
_{S_{1}^{2}}\right\} \leq\left\Vert M_{f}\right\Vert \leq2\sqrt{2}\left\Vert
f\right\Vert _{S_{1}^{2}}.\label{banacgalgebra}%
\end{equation}

\end{theorem}

\begin{proof}
The upper bound of $\left\Vert M_{f}\right\Vert $ is by Theorem
\ref{upperbound}. The lower bound of $\left\Vert M_{f}\right\Vert $ follows by
noting that $\left\Vert M_{f}\right\Vert \geq\left\Vert M_{f}1\right\Vert
_{S_{1}^{2}}=\left\Vert f\right\Vert _{S_{1}^{2}}.$
\end{proof}

Recall that the multiplier (space) of a holomorphic function space $H$ is the set
of $f$ in $H$ such that $M_{f}$ is a bounded operator on $H.$ The multiplier
of the Hardy space $H^{2}({\mathbb{D)}}$ or the Bergman space $A^{2}%
({\mathbb{D)}}$ is $H^{\infty}({\mathbb{D)}}$. The multiplier of Dirichlet
space $D^{2}({\mathbb{D)}}$ is strictly contained in $H^{\infty}({\mathbb{D)}%
}$ \cite{EM}. It is interesting to note that the multiplier of $S_{1}%
^{2}({\mathbb{D}})$ is $S_{1}^{2}({\mathbb{D}})$ itself as a set. But they
carry different norms. In particular, $S_{1}^{2}({\mathbb{D}})$ is an algebra,
but not a Banach algebra, see Example \ref{notequal} below.

It is interesting to know when the equalities holds in (\ref{banacgalgebra}).
Here we give some examples, and we will see in the last section that if $f $
is not a constant, then $\|f\|_{\infty}<\|M_{f}\|$.

\begin{example}
For each natural number $k,$ $\left\Vert M_{z^{k}}\right\Vert =\left\Vert
z^{k}\right\Vert _{S_{1}^{2}}=\sqrt{\frac{(k+1)(k+2)}{2}}.$ Note that
\[
\frac{(n+k+1)(n+k+2)}{2}\leq\frac{(k+1)(k+2)}{2}\frac{(n+1)(n+2)}{2},n\geq0.
\]
Therefore
\[
\left\Vert z^{k}f\right\Vert _{S_{1}^{2}}^{2}={\sum_{n\geq0}}\frac
{(n+k+1)(n+k+2)}{2}\left\vert f_{n}\right\vert ^{2}\leq\left\Vert
z^{k}\right\Vert _{S_{1}^{2}}^{2}\left\Vert f\right\Vert _{S_{1}^{2}}^{2}.
\]
This completes the proof.
\end{example}

\begin{example}
\label{notequal}$\left\Vert M_{1+z}\right\Vert >\sqrt{4.5}>\left\Vert
1+z\right\Vert _{S_{1}^{2}}=2.$ Let $f(z)$ be as in (\ref{extremal}),
\[
f(z)={\sum_{n\geq0}}\frac{2}{(n+1)(n+2)}z^{n},
\]
Then $\left\Vert f\right\Vert _{S_{1}^{2}}=\sqrt{2}.$ A direct computation
shows that
\begin{align*}
(1+z)f(z) &  =1+{\sum_{n\geq1}}\frac{4}{n(n+2)}z^{n}\\
\left\Vert (1+z)f\right\Vert _{S_{1}^{2}}^{2} &  =1+{\sum_{n\geq1}}%
\frac{(n+1)(n+2)}{2}\left[  \frac{4}{n(n+2)}\right]  ^{2}\\
&  =1+8{\sum_{n\geq1}}\frac{(n+1)}{n^{2}(n+2)}>1+8{\sum_{n\geq1}}\frac
{1}{n(n+1)}=9.
\end{align*}
Therefore $\left\Vert M_{1+z}\right\Vert >\sqrt{4.5}.$ Since
\[
\left\Vert (1+z)f\right\Vert _{S_{1}^{2}}>\sqrt{9}>\sqrt{8}=\left\Vert
1+z\right\Vert _{S_{1}^{2}}\left\Vert f\right\Vert _{S_{1}^{2}},
\]
this example also demonstrates that $S_{1}^{2}({\mathbb{D}})$ is not a Banach algebra.
\end{example}

\section{Finite Blaschke products and $m$-isometries}

It is known that the only inner functions belong to Dirichlet space
$D^{2}({\mathbb{D}})$ are finite Blaschke products, so this is also true for
the space $S_{1}^{2}({\mathbb{D}}).$ It has been observed in \cite{AHP} that
the only isometric multiplication operators on $S^{2}({\mathbb{D}})$ are the
constant multiples of the identity operator. Here we prove a stronger result
that $M_{f}$ on $S_{1}^{2}({\mathbb{D}})$ is a $2$-isometry if and only if it
is a constant multiple of the identity operator. But we first show that
$M_{z}$ on $S_{1}^{2}({\mathbb{D}})$ is a $3$-isometry, which explains why
there are no nontrivial isometric or $2$-isometric multiplication operators on
$S^{2}({\mathbb{D}}).$

A systematic study of $m$-isometries was initiated by Agler and Stankus in a
series of three papers, the first one is \cite{AS}. The theory for
$m$-isometries on Hilbert spaces has rich connections to Toeplitz operators,
classical function theory, ordinary differential equations and other areas of
mathematics. The work of Richter \cite{Richter} and \cite{R} on analytic
$2$-isometries has a connection with the invariant subspaces of the shift
operator on the Dirichlet space. Recently complete characterizations of
$m$-isometric weighted shifts were obtained \cite{BMN} \cite{Guieot}, from
which it follows easily that $M_{z}$ on $S_{1}^{2}({\mathbb{D}})$ is a
$3$-isometry. However, finding the connection between a natural function space
such as $S_{1}^{2}({\mathbb{D}})$ and $3$-isometries is exciting, since\ it
may lead to better understanding of both general $3$-isometries and the space
$S_{1}^{2}({\mathbb{D}})$ as in the case of $2$-isometries and Dirichlet
space. There are also study of $(m,p)$-isometries and related operators on
Banach spaces recently, see \cite{Gujmaa} \cite{Guszged} and references therein.

The operator $T$ on $H$ is an $m$-isometry for some positive integer $m$ as in
\cite{AS}, if
\begin{align*}
\beta_{m}(T) & :=(\overline{z}z-1)^{m}(T)=\sum\limits_{k=0}^{m}(-1)^{m-k}%
\binom{m}{k}\overline{z}^{k}z^{k}|_{\overline{z}=T^{\ast},z=T}\\
& =\sum\limits_{k=0}^{m}(-1)^{m-k}\binom{m}{k}T^{\ast k}T^{k}=0,
\end{align*}
where $T^{\ast}$ is always on the left of $T$. Equivalently
\[
\left\langle \beta_{m}(T)h,h\right\rangle =\sum\limits_{k=0}^{m}%
(-1)^{m-k}\binom{m}{k}\left\Vert T^{k}h\right\Vert ^{2}=0{\text{ for all }%
}h\in H.
\]
We say $T$ is a strict $m$-isometry if $T$ is an $m$-isometry but not an
$(m-1)$-isometry.

Let $\left\{ e_{n},n\geq0\right\} $ be the standard bases of $l_{2}.$ The
operator $T$ is a unilateral weighted shift with weight sequence $\left\{
w_{n}\right\} $ if $Te_{n}=w_{n}e_{n+1},n\geq0.$ The following result is from
Corollary 4.6 \cite{Guieot}, where more general results for both unilateral
weighted shifts and bilateral weighted shifts on $l_{p}$ space are obtained.

\begin{lemma}
\label{polycoro}Let $T$ be a unilateral shift with weights $w_{n}$ on $l_{2}.
$ Then $T$ is a strict $m$-isometry if and only if there exists a polynomial
$P(x)$ of degree $m-1$ such that $P(n)>0$ for $n\geq0$ and
\begin{equation}
\left\vert w_{n}\right\vert ^{2}=\frac{P(n+1)}{P(n)}{\text{ for }}%
n\geq0.\label{wnp}%
\end{equation}

\end{lemma}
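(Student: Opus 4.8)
The plan is to exploit the fact that for a unilateral weighted shift the operators $T^{\ast k}T^k$ are all diagonal in the standard basis, so that the $m$-isometry defect $\beta_m(T)$ is diagonal as well and the operator equation $\beta_m(T)=0$ collapses to a single scalar recurrence; this recurrence one then recognises as the statement that a certain positive sequence agrees with a polynomial of degree at most $m-1$.

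First I would note that any $m$-isometry is injective, since $Th=0$ forces $T^kh=0$ for $k\ge1$ and hence $0=\langle\beta_m(T)h,h\rangle=(-1)^m\|h\|^2$; so a weighted-shift $m$-isometry has all weights nonzero. Recording the action of powers, if $Te_n=w_ne_{n+1}$ then $T^{\ast k}T^ke_n=\big(\prod_{i=n}^{n+k-1}|w_i|^2\big)e_n$, so each $T^{\ast k}T^k$, and therefore $\beta_m(T)=\sum_{k=0}^m(-1)^{m-k}\binom mk T^{\ast k}T^k$, is diagonal. Consequently $\beta_m(T)=0$ if and only if $\langle\beta_m(T)e_n,e_n\rangle=0$ for every $n\ge0$. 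Putting $b_0:=1$ and $b_n:=\prod_{i=0}^{n-1}|w_i|^2>0$, one has $\|T^ke_n\|^2=b_{n+k}/b_n$, and after clearing the denominator the condition becomes
\[
\sum_{k=0}^m(-1)^{m-k}\binom mk\,b_{n+k}=0\qquad\text{for all }n\ge0 ,
\]
that is, $\Delta^m b\equiv0$, where $(\Delta f)_n=f_{n+1}-f_n$.

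Next I would invoke the elementary finite-difference fact that a sequence satisfies $\Delta^m b\equiv0$ if and only if $b_n=P(n)$ for all $n\ge0$ for some polynomial $P$ with $\deg P\le m-1$ — one direction is Newton's forward-difference expansion $b_n=\sum_{j=0}^{m-1}\binom nj(\Delta^j b)_0$, and the other is that $\Delta$ strictly lowers the degree of a polynomial. Since $b_n>0$, this yields a polynomial $P$ with $P(n)>0$ for $n\ge0$ and $|w_n|^2=b_{n+1}/b_n=P(n+1)/P(n)$. Conversely, given such a $P$ with $\deg P=m-1$, define $|w_n|^2=P(n+1)/P(n)$; the telescoping product gives $b_n=\prod_{i=0}^{n-1}P(i+1)/P(i)=P(n)/P(0)$, a polynomial of degree $m-1$ in $n$, so $\Delta^m b\equiv0$ and $T$ is an $m$-isometry, and since $P(n+1)/P(n)\to1$ the weights remain bounded, so $T$ is a bounded operator on $\ell_2$. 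Finally, $T$ is an $(m-1)$-isometry precisely when $\Delta^{m-1}b\equiv0$, i.e.\ when $\deg P\le m-2$; hence $T$ is a strict $m$-isometry exactly when $\deg P=m-1$, which is the assertion.

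The only genuinely substantive ingredient is the finite-difference characterisation relating $\Delta^m b\equiv0$ to polynomiality of degree $\le m-1$; everything else is bookkeeping with the diagonal structure of weighted shifts. The points that want a little care are matching the degree of $P$ to strictness (degree exactly $m-1$, corresponding to an $m$-isometry that is not an $(m-1)$-isometry) and noticing that the positivity hypothesis $P(n)>0$ for $n\ge0$ in the statement is precisely what the product representation $b_n=\prod|w_i|^2>0$ forces.
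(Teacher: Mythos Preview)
The paper does not actually prove this lemma; it is quoted as Corollary~4.6 of \cite{Guieot} and used as a black box. Your argument is correct and supplies a self-contained proof: the reduction to the diagonal entries of $\beta_m(T)$ is exactly the right move for weighted shifts, the identification of $\langle\beta_m(T)e_n,e_n\rangle=0$ with the finite-difference relation $\Delta^m b\equiv 0$ on $b_n=\prod_{i<n}|w_i|^2$ is correct, and the standard fact that $\Delta^m b\equiv 0$ characterises polynomial sequences of degree at most $m-1$ then does the work. The matching of strictness with $\deg P=m-1$ and the positivity $P(n)=b_n>0$ are handled cleanly. There is nothing to correct.
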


\begin{lemma}
The multiplication operator $M_{z}$ on $S_{1}^{2}({\mathbb{D}})$ is a strict
$3$-isometry.
\end{lemma}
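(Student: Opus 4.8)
The plan is to pass to the monomial basis. By the power-series form of the norm in (\ref{defs1}), the monomials $z^n$ are mutually orthogonal in $S_1^2(\mathbb{D})$ with $\|z^n\|_{S_1^2}^2=(n+1)(n+2)/2$, so the normalized family $e_n:=z^n/\|z^n\|_{S_1^2}$ is an orthonormal basis, and $M_z$ becomes a unilateral weighted shift $M_z e_n=w_n e_{n+1}$ with
\[
|w_n|^2=\frac{\|z^{n+1}\|_{S_1^2}^2}{\|z^n\|_{S_1^2}^2}=\frac{(n+2)(n+3)/2}{(n+1)(n+2)/2}=\frac{n+3}{n+1},\qquad n\geq 0 .
\]
In particular $|w_n|^2\leq 3$, so $M_z$ is bounded (this also follows from Theorem \ref{norm}), and Lemma \ref{polycoro} applies.

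Next I would produce the polynomial required by Lemma \ref{polycoro}. Take $P(x)=(x+1)(x+2)$; then $P$ has degree $2$, $P(n)=(n+1)(n+2)>0$ for all $n\geq 0$, and
\[
\frac{P(n+1)}{P(n)}=\frac{(n+2)(n+3)}{(n+1)(n+2)}=\frac{n+3}{n+1}=|w_n|^2 .
\]
Thus (\ref{wnp}) holds with a polynomial of degree exactly $m-1=2$, and Lemma \ref{polycoro} yields that $M_z$ is a strict $3$-isometry; since $\deg P=2>1$, no polynomial of lower degree can satisfy (\ref{wnp}), so $M_z$ is not a $2$-isometry.

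There is no real obstacle here once Lemma \ref{polycoro} is available; the only points needing care are using the exact value $\|z^n\|_{S_1^2}^2=(n+1)(n+2)/2$ so that the weights telescope against $P(x)=(x+1)(x+2)$, and checking that $\deg P$ is exactly $2$ in order to get strictness rather than merely the $3$-isometry property. As an alternative to quoting Lemma \ref{polycoro}, one may verify directly that for every $n$,
\[
\sum_{k=0}^{3}(-1)^{3-k}\binom{3}{k}\,\|z^{n+k}\|_{S_1^2}^2=0,\qquad \sum_{k=0}^{2}(-1)^{2-k}\binom{2}{k}\,\|z^{n+k}\|_{S_1^2}^2=1\neq 0,
\]
i.e. the third forward difference of the quadratic $j\mapsto(j+1)(j+2)/2$ vanishes while the second does not; dividing by $\|z^n\|_{S_1^2}^2$ gives $\langle\beta_3(M_z)e_n,e_n\rangle=0$ and $\langle\beta_2(M_z)e_n,e_n\rangle>0$, which is exactly the same computation unpackaged.
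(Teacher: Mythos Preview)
Your proof is correct and follows essentially the same approach as the paper: identify $M_z$ as a unilateral weighted shift on the normalized monomial basis, compute $|w_n|^2=(n+3)/(n+1)$, and apply Lemma~\ref{polycoro} with the degree-$2$ polynomial $P(x)=(x+1)(x+2)$. Your added remarks on boundedness and the direct forward-difference verification are fine but not needed beyond what the paper does.
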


\begin{proof}
Note that $M_{z}z^{n}=z^{n+1}.$ Thus
\[
M_{z}\frac{z^{n}}{\left\Vert z^{n}\right\Vert }=\frac{\left\Vert
z^{n+1}\right\Vert }{\left\Vert z^{n}\right\Vert }\frac{z^{n+1}}{\left\Vert
z^{n+1}\right\Vert }=\sqrt{\frac{(n+3)}{(n+1)}}\frac{z^{n+1}}{\left\Vert
z^{n+1}\right\Vert }.
\]
Hence $M_{z}$ is a weighted shift with weights $w_{n}=\sqrt{(n+3)/(n+1)}.$
Note that
\[
w_{n}^{2}=\frac{P(n+1)}{P(n)}{\text{ for }}P(n)=(n+1)(n+2),n\geq0.
\]
By previous lemma, $M_{z}$ is a strict $3$-isometry.
\end{proof}

Since $M_{z}$ on $S^{2}({\mathbb{D}})$ is also a weighted shift, a similar
argument shows that $M_{z}$ on $S^{2}({\mathbb{D}})$ is not an $m$-isometry
for any $m\geq1.$ Since the spectrum of weighted shifts has been studied
extensively in the past \cite{Sh}, we have that $\sigma(M_{z})=\overline
{{\mathbb{D}}},$ $\sigma_{e}(M_{z})={\mathbb{T}}$. Furthermore, for each
$\lambda\in{\mathbb{D}}$, $M_{z}-\lambda$ is left invertible, and $\ker
(M_{z}^{\ast}-\overline{\lambda}I)$ is one dimensional, in fact, $\ker
(M_{z}^{\ast}-\overline{\lambda}I)$ is spanned by $K_{\lambda}.$ Therefore, if
$\theta$ is a finite Blaschke product, then $\theta(M_{z})$ is well-defined.
It is easy to see that $\theta(M_{z})$ is the multiplication operator
$M_{\theta}.$ The following result is Proposition 2.7 \cite{Guszged}, see
Theorem 2.10 \cite{Guszged} for a more general result.

\begin{lemma}
\label{finite}If $T$ is a strict $m$-isometry on $H$ and $\theta$ is a finite
Blaschke product, then $\theta(T)$ is a strict $m$-isometry on $H.$
\end{lemma}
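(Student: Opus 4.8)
The plan is to reduce everything to one algebraic identity, obtained via the hereditary functional calculus, that expresses $\beta_l(\theta(T))$ in terms of $\beta_l(T)$ for every $l\ge 1$. First note $\theta(T)$ is well defined: since $T$ is an $m$-isometry, $\|T^n\|=O(n^{(m-1)/2})$, so $\sigma(T)\subseteq\overline{\mathbb{D}}$, while the poles of $\theta$ lie off $\overline{\mathbb{D}}$. Write $\theta=P/Q$ with $P(z)=c\prod_{i=1}^{d}(z-a_i)$ and $Q(z)=\prod_{i=1}^{d}(1-\overline{a_i}z)$, $|c|=1$, $a_i\in\mathbb{D}$, $d\ge 1$; then $Q(T)$ is invertible and commutes with $P(T)$, so $\theta(T)^k=P(T)^kQ(T)^{-k}$, and using $P(T)^\ast=P^\sharp(T^\ast)$, $Q(T)^\ast=Q^\sharp(T^\ast)$ (where $f^\sharp(z)=\overline{f(\overline{z})}$) one gets $\theta(T)^{\ast k}\theta(T)^k=P^\sharp(T^\ast)^kQ^\sharp(T^\ast)^{-k}P(T)^kQ(T)^{-k}$.

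Next I would pull $Q^\sharp(T^\ast)^{-l}$ to the far left and $Q(T)^{-l}$ to the far right of $\beta_l(\theta(T))=\sum_{k=0}^{l}(-1)^{l-k}\binom{l}{k}\theta(T)^{\ast k}\theta(T)^k$; the remaining sum is already in "all $T^\ast$ on the left" order, hence equals the hereditary evaluation ($\overline{z}\mapsto T^\ast$ left, $z\mapsto T$ right) of $\sum_{k}(-1)^{l-k}\binom{l}{k}\bigl(Q^\sharp(\overline{z})Q(z)\bigr)^{l-k}\bigl(P^\sharp(\overline{z})P(z)\bigr)^{k}=E(\overline{z},z)^l$, where $E(\overline{z},z):=P^\sharp(\overline{z})P(z)-Q^\sharp(\overline{z})Q(z)$. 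Since $|\theta|=1$ on $\mathbb{T}$, the two-variable polynomial $E$ vanishes on $\{\overline{z}z=1\}$, so $E(\overline{z},z)=(\overline{z}z-1)R(\overline{z},z)$ for a two-variable polynomial $R$; and the hereditary evaluation of $(\overline{z}z-1)^l\overline{z}^{\,i}z^{\,j}$ is $T^{\ast i}\beta_l(T)T^{j}$. Substituting $E^l=(\overline{z}z-1)^lR^l$ yields the key formula
\[
\beta_l(\theta(T))=Q^\sharp(T^\ast)^{-l}\Bigl(\sum_{i,j}r^{[l]}_{ij}T^{\ast i}\beta_l(T)T^{j}\Bigr)Q(T)^{-l},\qquad R(\overline{z},z)^{l}=\sum_{i,j}r^{[l]}_{ij}\overline{z}^{\,i}z^{\,j}.
\]
In particular $\beta_m(T)=0$ forces $\beta_m(\theta(T))=0$, so $\theta(T)$ is an $m$-isometry.

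For strictness I would exploit positivity. The crucial point is that $R(\overline{w},z)=\dfrac{\overline{Q(w)}Q(z)-\overline{P(w)}P(z)}{1-\overline{w}z}=\overline{Q(w)}Q(z)\cdot\dfrac{1-\overline{\theta(w)}\theta(z)}{1-\overline{w}z}$ is a positive semidefinite kernel (the last factor is the reproducing kernel of the model space $H^2\ominus\theta H^2$, and $Q$ is zero-free on $\overline{\mathbb{D}}$), so by the Schur product theorem $R(\overline{w},z)^{m-1}$ is a PSD polynomial kernel, say $R(\overline{w},z)^{m-1}=\sum_s\overline{q_s(w)}q_s(z)$ for polynomials $q_s$. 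Using the standard fact that $\beta_{m-1}(T)\ge 0$ for an $m$-isometry, the middle operator above equals $\sum_s q_s(T)^\ast\beta_{m-1}(T)q_s(T)\ge 0$; if it vanished, then $\beta_{m-1}(T)^{1/2}q_s(T)=0$ for every $s$. But the $q_s$ have no common zero in $\overline{\mathbb{D}}$: a common zero $z_0$ would force $R(\overline{z_0},z_0)^{m-1}=\sum_s|q_s(z_0)|^2=0$, whereas $R(\overline{w},w)=\dfrac{|Q(w)|^2-|P(w)|^2}{1-|w|^2}>0$ on $\mathbb{D}$ and, by Julia--Carath\'eodory, $R(\overline{\zeta},\zeta)=|Q(\zeta)|^2|\theta'(\zeta)|>0$ on $\mathbb{T}$ (a finite Blaschke product has no critical points on $\mathbb{T}$). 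Hence $\gcd\{q_s\}$ is zero-free on $\overline{\mathbb{D}}\supseteq\sigma(T)$, so it evaluates to an invertible operator at $T$, and Bezout's identity upgrades $\{\beta_{m-1}(T)^{1/2}q_s(T)=0\}_s$ to $\beta_{m-1}(T)^{1/2}=0$, contradicting that $T$ is not an $(m-1)$-isometry. Thus $\beta_{m-1}(\theta(T))\ne 0$ and $\theta(T)$ is a strict $m$-isometry. (The same formula with the roles reversed shows conversely that $\theta(T)$ strict $m$-isometry $\Rightarrow$ $T$ strict $m$-isometry, but this is not needed here.)

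The bulk of the work, and the most error-prone part, is the second paragraph: the hereditary-calculus bookkeeping — moving the powers of $Q(T)^{-1}$ and $Q^\sharp(T^\ast)^{-1}$ out past everything else, and correctly identifying the polynomial factorization $E=(\overline{z}z-1)R$ — must be done carefully with respect to the left/right ordering of $T^\ast$ and $T$. By contrast the genuinely substantive input is concentrated in the strictness step: the positivity of the model-space kernel (so the middle operator is PSD and admits a square-root factorization) and the boundary positivity of $R$ on $\mathbb{T}$, i.e. $\theta'\ne 0$ there, which is exactly what excludes common zeros of the $q_s$ inside $\sigma(T)$.
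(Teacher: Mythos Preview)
Your argument is correct. The paper itself does not prove this lemma at all: it simply quotes it as Proposition~2.7 of \cite{Guszged} (with the remark that Theorem~2.10 there is more general). So there is nothing to compare at the level of the present paper; you have supplied a complete self-contained proof where the authors defer to an external reference.

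Your route --- the hereditary functional calculus identity
\[
\beta_l(\theta(T))=Q^\sharp(T^\ast)^{-l}\Bigl(\text{hereditary eval of }(\overline{z}z-1)^l R(\overline{z},z)^l\Bigr)Q(T)^{-l},
\]
together with the recognition of $R(\overline w,z)=\overline{Q(w)}Q(z)\cdot\dfrac{1-\overline{\theta(w)}\theta(z)}{1-\overline w z}$ as a positive polynomial kernel --- is exactly the circle of ideas developed in the cited paper \cite{Guszged}, so you are essentially reconstructing that argument. The one point worth flagging (you handle it, but it deserves emphasis) is the strictness step: writing $R^{m-1}=\sum_s\overline{q_s(w)}q_s(z)$ with \emph{finitely many} polynomials $q_s$ uses that $R$ is a polynomial kernel (the model space $H^2\ominus\theta H^2$ is $d$-dimensional for a degree-$d$ Blaschke product), so that Bezout applies; and the B\'ezout conclusion $\beta_{m-1}(T)^{1/2}g(T)=0$ needs the $p_s(T)$ to be placed on the right, which is legitimate because all $q_s(T),p_s(T)$ commute with one another. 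Both points are fine as you wrote them. The boundary positivity $R(\overline\zeta,\zeta)=|Q(\zeta)|^2|\theta'(\zeta)|>0$ on $\mathbb{T}$ can in fact be read off directly from $|\theta'(\zeta)|=\sum_i P_{a_i}(\zeta)>0$, so the appeal to Julia--Carath\'eodory, while correct, is more than you need.
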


The following theorem follows directly from the above two lemmas.

\begin{theorem}
\label{polyg}Let $\theta$ be a finite Blaschke product. Then $M_{\theta}$ on
$S_{1}^{2}({\mathbb{D)}}$ is a $3$-isometry. That is, for any $f\in S_{1}%
^{2}({\mathbb{D)}}$,
\[
\left\Vert \theta^{3}f\right\Vert ^{2}-3\left\Vert \theta^{2}f\right\Vert
^{2}+3\left\Vert \theta f\right\Vert ^{2}-\left\Vert f\right\Vert ^{2}=0,
\]
where the norm is given by (\ref{defs1}).
\end{theorem}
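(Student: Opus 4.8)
The plan is to deduce Theorem \ref{polyg} immediately from the machinery already assembled in this section, so the proof is essentially a two-line citation. First I would invoke the preceding lemma, which establishes that $M_z$ on $S_1^2({\mathbb{D}})$ is a strict $3$-isometry (its weights $w_n=\sqrt{(n+3)/(n+1)}$ satisfy $|w_n|^2=P(n+1)/P(n)$ with $P(x)=(x+1)(x+2)$ of degree $2$, so Lemma \ref{polycoro} applies). Next I would recall the discussion showing that for a finite Blaschke product $\theta$, the operator $\theta(M_z)$ is well-defined (since $\sigma(M_z)=\overline{{\mathbb{D}}}$ and $M_z-\lambda$ is left-invertible with one-dimensional cokernel spanned by $K_\lambda$ for each $\lambda\in{\mathbb{D}}$) and coincides with $M_\theta$. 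Then Lemma \ref{finite}, applied with $T=M_z$ and $m=3$, says precisely that $\theta(M_z)=M_\theta$ is again a strict $3$-isometry on $S_1^2({\mathbb{D}})$.

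Having $M_\theta$ be a $3$-isometry means, by the definition of $\beta_m(T)$ recalled above with $T=M_\theta$ and $m=3$, that $\beta_3(M_\theta)=M_\theta^{\ast 3}M_\theta^3-3M_\theta^{\ast 2}M_\theta^2+3M_\theta^\ast M_\theta-I=0$. Testing this identity against an arbitrary $f\in S_1^2({\mathbb{D}})$ and using $M_\theta^k f=\theta^k f$ gives
\[
\left\Vert \theta^{3}f\right\Vert ^{2}-3\left\Vert \theta^{2}f\right\Vert ^{2}+3\left\Vert \theta f\right\Vert ^{2}-\left\Vert f\right\Vert ^{2}=\left\langle \beta_3(M_\theta)f,f\right\rangle=0,
\]
which is exactly the displayed conclusion, with the norm being the $S_1^2$-norm of (\ref{defs1}) as claimed.

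There is no real obstacle here: the theorem is explicitly flagged in the excerpt as following ``directly from the above two lemmas,'' so the only content is the bookkeeping of translating the operator identity $\beta_3(M_\theta)=0$ into the scalar quadratic-form statement via $M_\theta^k f = \theta^k f$. If anything needs a word of care, it is the preliminary remark that $M_\theta = \theta(M_z)$ genuinely falls under the hypotheses of Lemma \ref{finite} — i.e. that the functional calculus $\theta(M_z)$ is legitimate and agrees with multiplication by $\theta$ — but this has already been argued in the paragraph following the strict-$3$-isometry lemma, so I would simply point to it rather than re-prove it.
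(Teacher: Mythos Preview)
Your proposal is correct and mirrors the paper's own argument exactly: the paper simply states that the theorem ``follows directly from the above two lemmas,'' meaning the lemma that $M_z$ is a strict $3$-isometry on $S_1^2(\mathbb{D})$ together with Lemma~\ref{finite}. Your additional line unpacking $\beta_3(M_\theta)=0$ into the displayed norm identity is the routine bookkeeping the paper leaves implicit.
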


It is natural to ask if there are other $M_{\theta}$ which are $m$-isometries.
It turns out those identified in the above theorem are the only ones.

\begin{theorem}
\label{lessone}Assume $\psi\in S_{1}^{2}({\mathbb{D)}}$ is not a constant. If
$M_{\psi}$ is an $m$-isometry on $S_{1}^{2}({\mathbb{D}})$ for some $m\geq1$,
then $\psi$ is a finite Blaschke product. Consequently, $M_{\psi}$ is a strict
3-isometry on $S_{1}^{2}({\mathbb{D}})$.
\end{theorem}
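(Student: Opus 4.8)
The plan is to show that if $M_\psi$ is an $m$-isometry with $\psi$ nonconstant, then $\psi$ must be inner, and since the only inner functions in $S_1^2(\mathbb{D})$ are finite Blaschke products, the result follows (the ``consequently'' part then being immediate from Theorem \ref{polyg} and Lemma \ref{finite}, once we know $\psi$ is a \emph{nonconstant} finite Blaschke product, hence a strict $3$-isometry by applying Lemma \ref{finite} to the strict $3$-isometry $M_z$). So the heart of the matter is: an $m$-isometric multiplication operator has an inner symbol.

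First I would write out the $m$-isometry condition against the constant function $1$. Since $M_\psi^k 1 = \psi^k$, the defining identity $\sum_{k=0}^m (-1)^{m-k}\binom{m}{k}\|M_\psi^k h\|^2 = 0$ for all $h$ specializes, with $h=1$, to
\[
\sum_{k=0}^m (-1)^{m-k}\binom{m}{k}\,\|\psi^k\|_{S_1^2}^2 = 0.
\]
More usefully, I would use the reproducing kernel. For each $w\in\mathbb{D}$, $M_\psi^\ast K_w = \overline{\psi(w)}\,K_w$, so applying the $m$-isometry identity to $h=K_w$ and using $\langle M_\psi^{\ast k} M_\psi^k K_w, K_w\rangle = |\psi(w)|^{2k} K_w(w)$ gives $K_w(w)\sum_{k=0}^m(-1)^{m-k}\binom{m}{k}|\psi(w)|^{2k} = K_w(w)(|\psi(w)|^2-1)^m = 0$, which only says $|\psi(w)|<1$ on $\mathbb{D}$ — not enough. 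The real leverage must come from testing against reproducing kernels at \emph{boundary} points, or equivalently from a radial-limit argument: since $S_1^2(\mathbb{D})\subseteq A(\mathbb{D})$ (shown in the excerpt), $\psi$ extends continuously to $\overline{\mathbb{D}}$, so I can hope to pass the $m$-isometry identity to the boundary.

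The cleanest route I would pursue: expand the $m$-isometry identity applied to a general $h = K_w$ but track the full norm relation, or better, use the standard fact (Agler–Stankus) that for an $m$-isometry $T$, the limit $\lim_{k\to\infty}\|T^k h\|^2/k^{m-1}$ exists and defines a seminorm; combined with the three norm-equivalence ingredients $\|g\|_{S_1^2}^2 = \|g\|_{H^2}^2 + \tfrac32\|g'\|_{A^2}^2 + \tfrac12\|g'\|_{H^2}^2$, I would analyze the growth of $\|\psi^k\|_{S_1^2}^2$. Writing $\|\psi^k\|_{S_1^2}^2$ out, the dominant term as $k\to\infty$ comes from $\|(\psi^k)'\|^2 = k^2\|\psi^{k-1}\psi'\|^2$; the $H^2$ and $A^2$ norms of $\psi^{k-1}\psi'$ are governed by $\int_{\mathbb{T}}|\psi|^{2(k-1)}|\psi'|^2$ and its area analogue. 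Since $M_\psi$ being an $m$-isometry forces $\|M_\psi^k h\|^2$ to be a polynomial in $k$ of degree $\le m-1$ for every $h$ (this is exactly the content of $\beta_m(M_\psi)=0$: iterating gives $\sum_{k}(-1)^{\cdot}\binom{m}{k}\|M_\psi^{k+j}h\|^2=0$ for all $j$, a linear recurrence whose solutions are polynomials of degree $< m$), in particular $\|\psi^k\|_{S_1^2}^2$ is a polynomial in $k$. A polynomial-growth bound on $\int_{\mathbb{T}}|\psi|^{2k}|\psi'|^2\,dm$ and on $\int_{\mathbb{D}}|\psi|^{2k}|\psi'|^2\,dA$ forces, via a measure-theoretic argument (if $|\psi|<1-\varepsilon$ on a boundary set of positive measure, the corresponding contribution decays exponentially and cannot produce polynomial growth unless... — actually the reverse: polynomial \emph{growth} means these integrals do \emph{not} decay, which forces $|\psi|=1$ a.e. on $\mathbb{T}$, i.e. $\psi$ inner). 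Making this dichotomy precise is the main obstacle: I must rule out the possibility that $\|\psi^k\|$ is bounded (the $m=1$ isometric case, already handled in \cite{AHP} and giving $\psi$ constant, contradiction) and show that polynomial-but-unbounded growth is incompatible with $|\psi|<1$ on a set of positive measure.

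I would therefore organize it as: (1) record that $\beta_m(M_\psi)=0$ implies $k\mapsto\|M_\psi^k h\|_{S_1^2}^2$ is a polynomial of degree $\le m-1$ for each fixed $h\in S_1^2(\mathbb{D})$, in particular for $h=1$; (2) compute $\|\psi^k\|_{S_1^2}^2$ in terms of $\int_{\mathbb{T}}|\psi|^{2k}\,dm$, $k^2\int_{\mathbb{T}}|\psi|^{2(k-1)}|\psi'|^2\,dm$, and $k^2\int_{\mathbb{D}}|\psi|^{2(k-1)}|\psi'|^2\,dA$; (3) since $\|\psi\|_\infty\le 1$, monotone convergence gives $\int_{\mathbb{T}}|\psi|^{2k}\,dm \to m(\{|\psi|=1\})$ and the other integrals are also controlled; if $m(\{|\psi|=1\})<1$ then a more careful estimate (splitting $\mathbb{T}$ into $\{|\psi|\ge 1-\delta\}$ and its complement) shows $\|\psi^k\|_{S_1^2}^2$ cannot match a polynomial unless it is bounded; (4) bounded means $M_\psi$ is power-bounded with $\|\psi^k\|$ convergent, and combined with the $m$-isometry identity forces $M_\psi$ to be an isometry (the leading polynomial coefficients vanish), whence by the $S^2$/$S_1^2$ argument of \cite{AHP} $\psi$ is constant — contradiction; (5) therefore $m(\{|\psi|=1\})=1$, $\psi$ is inner, hence a finite Blaschke product; (6) apply Lemma \ref{finite} to conclude $M_\psi$ is a strict $3$-isometry. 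Step (3)–(4), the quantitative incompatibility of polynomial growth with non-inner symbol, is where the real work lies; an alternative cleaner argument for that step is to observe that for $\psi$ with $\|\psi\|_\infty\le1$, the operator $M_\psi$ on $H^2$ is a contraction, and an $m$-isometric contraction is an isometry (since $\|T^kh\|$ is both nonincreasing and polynomial, hence constant), so $M_\psi$ is an isometry \emph{on $H^2$}; combined with it being an $m$-isometry on $S_1^2$ and chasing the norm relation, one pins down $\psi$ inner directly — I would try this contraction route first as it sidesteps the delicate boundary measure estimates.
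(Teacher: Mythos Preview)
Your growth-rate strategy is genuinely different from the paper's, but as written it has real gaps.

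First, the reproducing-kernel computation is wrong: $\langle M_\psi^{\ast k}M_\psi^{k}K_w,K_w\rangle=\|\psi^{k}K_w\|_{S_1^2}^{2}$, not $|\psi(w)|^{2k}K_w(w)$. The latter would require $M_\psi^{\ast k}M_\psi^{k}K_w=|\psi(w)|^{2k}K_w$, which fails because $K_w$ is an eigenvector of $M_\psi^\ast$, not of $M_\psi^{\ast k}M_\psi^{k}$. So you do \emph{not} obtain $(|\psi(w)|^{2}-1)^{m}=0$, and you have not established $\|\psi\|_\infty\le 1$---yet you use it freely in step~(3).

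Second, your ``cleaner'' alternative at the end conflates two spaces. You know $M_\psi$ is an $m$-isometry on $S_1^2(\mathbb D)$ and (if $\|\psi\|_\infty\le1$) a contraction on $H^2(\mathbb D)$. The ``$m$-isometric contraction is an isometry'' argument needs both properties on the \emph{same} space; there is no reason $M_\psi$ is an $m$-isometry on $H^2$, nor a contraction on $S_1^2$.

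Third, you correctly flag steps (3)--(4) as ``where the real work lies'' and do not carry them out. Even granting $\|\psi\|_\infty\le1$, showing that polynomial growth of $k\mapsto\|\psi^{k}\|_{S_1^2}^{2}$ forces $|\psi|=1$ a.e.\ on $\mathbb T$ is delicate when $\{|\psi|=1\}$ could have positive but not full measure; the sketch you give does not close this.

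The paper bypasses all of this with spectral theory. By Agler--Stankus, the spectrum of any $m$-isometry is either $\overline{\mathbb D}$ or lies in $\mathbb T$; since $\sigma(M_\psi)=\psi(\overline{\mathbb D})$ by (\ref{spectrum}) and $\psi$ is nonconstant, $\psi(\overline{\mathbb D})=\overline{\mathbb D}$. A Fredholm-index fact from Agler--Stankus then forces the essential spectrum to satisfy $\sigma_e(M_\psi)\subseteq\mathbb T$ or $\sigma_e(M_\psi)\supseteq\mathbb D$; combined with $\sigma_e(M_\psi)=\psi(\mathbb T)$ and the observation that $\psi(\mathbb T)\supseteq\mathbb D$ is impossible for $\psi'\in H^2$, one gets $\psi(\mathbb T)\subseteq\mathbb T$, hence $\psi$ is a finite Blaschke product. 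Note in particular that the inclusion $\sigma(M_\psi)\subseteq\overline{\mathbb D}$ is exactly what yields $\|\psi\|_\infty\le1$---so even your route needs the Agler--Stankus spectral input at the outset, and once you have it, the paper's essential-spectrum argument is far more direct than a growth analysis.
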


\begin{proof}
By Lemma 1.21 \cite{AS}, the spectrum of an $m$-isometry is either equal to
$\overline{{\mathbb{D}}}$ or is contained in ${\mathbb{T}}$. If $M_{\psi}$ is
an $m$-isometry, by (\ref{spectrum}), $\sigma(M_{\psi})=\psi(\overline
{{\mathbb{D}}})=\overline{{\mathbb{D}}}.$ Also by page 385 in \cite{AS}, $\forall \lambda \in {\mathbb{D}}$, $M_\psi - \lambda$ is left semi-Fredholm, and $\text{ind} (M_\psi - \lambda) = \text{ind} M_\psi$. Since
\begin{align*}
\text{ind} (M_\psi - \lambda) &= \dim \ker (M_\psi - \lambda) - \dim \ker (M_\psi - \lambda)^* \\
&= 0 - \dim [S_{1}^{2}({\mathbb{D)}} \ominus (M_\psi - \lambda)S_{1}^{2}({\mathbb{D)}}],
\end{align*}
$\sigma_e(M_\psi) \supseteq {\mathbb{D}}$ or  $\sigma_e(M_\psi) \subseteq {\mathbb{T}}$. Note that $\sigma_e(M_\psi) = \psi({\mathbb{T}})$. Thus if $\sigma_e(M_\psi) \supseteq {\mathbb{D}}$, then $\psi({\mathbb{T}}) = \overline{{\mathbb{D}}}$, which contradicts the fact that $\psi' \in H^2({\mathbb{D}})$. So $\sigma_e(M_\psi) = \psi({\mathbb{T}}) \subseteq {\mathbb{T}}$. By maximum modulus principle, we also have $\psi({\mathbb{T}}) \supseteq {\mathbb{T}}$, therefore $\psi({\mathbb{T}}) = {\mathbb{T}}$. So $\psi$ is a finite Blaschke product. It then follows from Theorem \ref{polyg} that $M_{\psi}$ is a strict
3-isometry on $S_{1}^{2}({\mathbb{D}})$.
\end{proof}

Now we study the $m$-isometry on $S^{2}({\mathbb{D}})$. Recall
\[
S^{2}({\mathbb{D)}}{\mathbb{=}}\left\{  f\in H({\mathbb{D)}}:\left\Vert
f\right\Vert _{S^{2}}^{2}=\left\vert f(0)\right\vert ^{2}+\left\Vert
f^{\prime}\right\Vert _{H^{2}}^{2}=\left\vert f(0)\right\vert ^{2}%
+{\sum_{n\geq1}}n^{2}\left\vert f_{n}\right\vert ^{2}<\infty\right\}  .
\]
For notational simplicity, in the rest of this section and the next section we
write $\left\Vert f\right\Vert =\left\Vert f\right\Vert _{S_{1}^{2}%
},\left\Vert f\right\Vert _{1}=\left\Vert f\right\Vert _{S^{2}}$, $D(f)=\Vert
f^{\prime}\Vert_{A^{2}}^{2}$. Then
\begin{equation}
2\left\Vert f\right\Vert ^{2}=2\Vert f\Vert_{H^{2}}^{2}+3\Vert f^{\prime}%
\Vert_{A^{2}}^{2}+\Vert f^{\prime}\Vert_{H^{2}}^{2}=\Vert f\Vert_{1}%
^{2}+2\Vert f\Vert_{H^{2}}^{2}+3D(f)-|f(0)|^{2}\label{twonorms}%
\end{equation}
Before we study the $m$-isometry on $S^{2}({\mathbb{D}})$, We need some
lemmas. We first recall a formula, which is essentially (1.3) in \cite{AS};
see also Theorem 2.7 \cite{Guieot} for several more general formulas on Banach spaces.

\begin{lemma}
\label{polygrowth}Assume $T$ is an $m$-isometry. Then
\[
\left\Vert T^{n}h\right\Vert ^{2}=\sum\limits_{k=0}^{m-1}\binom{n}%
{k}\left\langle \beta_{k}(T)h,h\right\rangle ,{\text{ }}h\in H.
\]

\end{lemma}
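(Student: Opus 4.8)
The plan is to regard $P(j):=T^{\ast j}T^{j}$ as an operator-valued sequence indexed by $j\ge 0$ and to recognise $\beta_{k}(T)$ as its $k$-th forward difference evaluated at $0$. Writing $\Delta$ for the forward difference operator $(\Delta P)(j)=P(j+1)-P(j)$, a one-line induction gives $(\Delta^{k}P)(j)=\sum_{i=0}^{k}(-1)^{k-i}\binom{k}{i}P(j+i)$; in particular $(\Delta^{k}P)(0)=\beta_{k}(T)$, and, more usefully, factoring out $T^{\ast j}$ on the left and $T^{j}$ on the right yields $(\Delta^{m}P)(j)=T^{\ast j}\beta_{m}(T)T^{j}$.

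First I would prove the Newton forward-difference reconstruction formula $P(n)=\sum_{k=0}^{n}\binom{n}{k}(\Delta^{k}P)(0)$, valid for any sequence with values in an abelian group (here, bounded operators under addition). This is purely combinatorial: substituting $(\Delta^{k}P)(0)=\sum_{i=0}^{k}(-1)^{k-i}\binom{k}{i}P(i)$ and interchanging the order of summation reduces it to the identity $\sum_{k=j}^{n}(-1)^{k-j}\binom{n}{k}\binom{k}{j}=\delta_{n,j}$, which follows from $\binom{n}{k}\binom{k}{j}=\binom{n}{j}\binom{n-j}{k-j}$ together with the vanishing of the alternating row sum $\sum_{i=0}^{n-j}(-1)^{i}\binom{n-j}{i}$ when $n>j$. (Alternatively one can run an induction on $n$ using Pascal's rule.)

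Next I would invoke the $m$-isometry hypothesis. Since $\beta_{m}(T)=0$, the identity $(\Delta^{m}P)(j)=T^{\ast j}\beta_{m}(T)T^{j}$ shows $(\Delta^{m}P)(j)=0$ for every $j\ge 0$; applying $\Delta$ repeatedly then gives $(\Delta^{k}P)(0)=\beta_{k}(T)=0$ for all $k\ge m$. Feeding this into the reconstruction formula collapses the sum: when $n\ge m$ the terms with $k=m,\dots,n$ vanish, and when $n<m$ the terms with $k>n$ vanish because $\binom{n}{k}=0$. In either case $T^{\ast n}T^{n}=\sum_{k=0}^{m-1}\binom{n}{k}\beta_{k}(T)$, and pairing both sides with $h$ and using $\langle T^{\ast n}T^{n}h,h\rangle=\Vert T^{n}h\Vert^{2}$ gives the stated formula.

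The argument is essentially bookkeeping, so I do not expect a serious obstacle; the one point that needs care is that $\beta_{k}(T)=0$ for $k>m$ is not literally the hypothesis $\beta_{m}(T)=0$, but relies on the factorisation $(\Delta^{m}P)(j)=T^{\ast j}\beta_{m}(T)T^{j}$ to force all higher differences to vanish at every base point, not just at $0$. Thus the main (minor) difficulty is stating the finite-difference inversion cleanly and being careful with that truncation.
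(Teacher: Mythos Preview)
Your argument is correct. The forward-difference bookkeeping is sound: $(\Delta^{k}P)(0)=\beta_{k}(T)$ by definition, the factorisation $(\Delta^{m}P)(j)=T^{\ast j}\beta_{m}(T)T^{j}$ kills all higher differences once $\beta_{m}(T)=0$, and Newton's reconstruction formula then truncates to the claimed identity.

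As for comparison with the paper: there is nothing to compare, because the paper does not prove this lemma. It is stated as a recalled fact, attributed to formula~(1.3) of Agler--Stankus \cite{AS} (with a pointer to more general Banach-space versions in \cite{Guieot}). Your finite-difference proof is in fact the standard route to this identity in that literature, so you have simply supplied what the paper chose to cite rather than reproduce.
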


We also need the following lemma. Recall that
\[
D^{2}({\mathbb{D}})=\left\{ f\in H({\mathbb{D)}}:\left\Vert f\right\Vert
_{D^{2}}^{2}=\left\Vert f\right\Vert _{H^{2}}^{2}+\left\Vert f^{\prime
}\right\Vert _{A^{2}}^{2}=\|f\|_{H^{2}}^{2}+D(f)\right\}
\]
is the Dirichlet space.

\begin{lemma}
\label{ffldirs} Let $\psi$ be a finite Blaschke product. Then for $f\in
D^{2}({\mathbb{D}}),n\geq0$,
\[
D(\psi^{n}f)=D(f)+n[D(\psi f)-D(f)].
\]

\end{lemma}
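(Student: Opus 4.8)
The plan is to reduce the identity $D(\psi^n f) = D(f) + n[D(\psi f) - D(f)]$ to a statement that $n \mapsto D(\psi^n f)$ is an affine function of $n$, and to prove that affinity by exhibiting $D(\psi^n f)$ as a quadratic form built from $M_\psi$ acting on the Dirichlet space. The key observation is that $\psi$ is inner, so $M_\psi$ is an isometry on $H^2({\mathbb D})$, whence $\|\psi^n f\|_{H^2}^2 = \|f\|_{H^2}^2$ for all $n$; therefore $D(\psi^n f) = \|\psi^n f\|_{D^2}^2 - \|f\|_{H^2}^2$, and it suffices to show $n \mapsto \|\psi^n f\|_{D^2}^2$ is affine in $n$. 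This is exactly the assertion that $M_\psi$ is a $2$-isometry on $D^2({\mathbb D})$: indeed, by Lemma~\ref{polygrowth} applied with $T = M_\psi$ and $m = 2$, an operator $T$ is a $2$-isometry precisely when $\|T^n h\|^2 = \|h\|^2 + n(\|Th\|^2 - \|h\|^2)$, which is the claimed formula with $h = f$.

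So the heart of the matter is: \emph{if $\psi$ is a finite Blaschke product, then $M_\psi$ is a $2$-isometry on $D^2({\mathbb D})$.} I would prove this in two steps. First, $M_z$ is a $2$-isometry on $D^2({\mathbb D})$: since $\|z^n\|_{D^2}^2 = n+1$ (from \eqref{defdiri}), $M_z$ is the weighted shift with weights $w_n^2 = (n+2)/(n+1) = P(n+1)/P(n)$ for $P(x) = x+1$, a degree-$1$ positive polynomial, so Lemma~\ref{polycoro} gives that $M_z$ is a strict $2$-isometry on $D^2({\mathbb D})$. Second, since $\sigma(M_z) = \overline{{\mathbb D}}$ and, for $\lambda \in {\mathbb D}$, $M_z - \lambda$ is left invertible with one-dimensional cokernel spanned by the reproducing kernel (exactly as recorded in the excerpt for $S^2$ and $S_1^2$, and equally valid for the Dirichlet space), the function $\theta(M_z)$ is well defined for any finite Blaschke product $\theta$ and equals $M_\theta$. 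Lemma~\ref{finite} then applies: the functional calculus image $\psi(M_z) = M_\psi$ of the strict $2$-isometry $M_z$ under the finite Blaschke product $\psi$ is again a (strict) $2$-isometry on $D^2({\mathbb D})$.

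Having established that $M_\psi$ is a $2$-isometry on $D^2({\mathbb D})$, I finish by assembling the pieces: Lemma~\ref{polygrowth} with $m=2$, $T=M_\psi$, $h=f$ yields
\[
\|\psi^n f\|_{D^2}^2 = \|f\|_{D^2}^2 + n\bigl(\|\psi f\|_{D^2}^2 - \|f\|_{D^2}^2\bigr),
\]
and subtracting $\|\psi^n f\|_{H^2}^2 = \|f\|_{H^2}^2 = \|\psi f\|_{H^2}^2$ from both sides (using that $M_\psi$ is an $H^2$-isometry) converts every $\|\cdot\|_{D^2}^2$ into the corresponding $D(\cdot)$, giving $D(\psi^n f) = D(f) + n[D(\psi f) - D(f)]$.

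The main obstacle I anticipate is the justification that the analytic/Blaschke functional calculus applies to $M_z$ on $D^2({\mathbb D})$ and that it identifies $\psi(M_z)$ with $M_\psi$ — that is, verifying the hypotheses of Lemma~\ref{finite} in the Dirichlet-space setting (left semi-Fredholmness of $M_z - \lambda$ for $\lambda \in {\mathbb D}$, and the evaluation $\psi(M_z) = M_\psi$). This is routine but needs the spectral picture of the Dirichlet shift; alternatively, one can bypass the functional calculus entirely and give a direct computation that $D(\psi^n f)$ is affine in $n$ using the reproducing-kernel formula for $D^2({\mathbb D})$, or cite the known fact that multiplication by an inner function on a $2$-isometric shift space is again a $2$-isometry.
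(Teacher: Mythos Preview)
Your proposal is correct and follows essentially the same approach as the paper: both establish that $M_\psi$ is a $2$-isometry on $D^2(\mathbb{D})$, apply Lemma~\ref{polygrowth} to obtain the affine growth of $\|\psi^n f\|_{D^2}^2$ in $n$, and then subtract the constant $H^2$-norms (using that $\psi$ is inner) to pass from $\|\cdot\|_{D^2}^2$ to $D(\cdot)$. The only difference is in how the $2$-isometry property is obtained---the paper cites \cite[Theorem~4.2]{RS91} directly, while you derive it internally from Lemma~\ref{polycoro} (showing $M_z$ is a strict $2$-isometry on $D^2(\mathbb{D})$) together with Lemma~\ref{finite}, exactly mirroring the paper's own argument for $S_1^2(\mathbb{D})$ in Theorem~\ref{polyg}; both routes are valid, and your self-contained version avoids the external reference at the cost of the routine functional-calculus check you already identified.
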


\begin{proof}
By \cite[Theorem 4.2]{RS91}, we have $M_{\psi}$ is a 2-isometry on
$D^{2}({\mathbb{D}})$, i.e.
\[
\|\psi^{2}f\|_{D_{2}}^{2}=2\|\psi f\|_{D_{2}}^{2}-\|f\|_{D_{2}}^{2},f\in
D_{2}({\mathbb{D}}).
\]
Since $M_{\psi}$ is an isometry on $H^{2}({\mathbb{D}})$, we obtain
\[
D(\psi^{2}f)=2D(\psi f)-D(f).
\]
Thus $M_{\psi}$ is a 2-isometry on the Hilbert space $D_{0}=\{f\in
D^{2}({\mathbb{D}}):f(0)=0,\|f\|_{*}^{2}:=D(f)\}$. It then follows from Lemma
\ref{polygrowth} that
\[
D(\psi^{n}f)=D(f)+n[D(\psi f)-D(f)],n\geq0.
\]
\end{proof}

\begin{proposition}
\label{igtisdn} Let $\psi$ be a finite Blaschke product. Then for $f\in
S^{2}({\mathbb{D}}),n\geq2$,
\begin{align*}
\|\psi^{n}f\|_{1}^{2} & =\frac{n(n-1)}{2}\|\psi^{2}f\|_{1}^{2}-n(n-2)\|\psi
f\|_{1}^{2}+\frac{(n-1)(n-2)}{2}\|f\|_{1}^{2}\\
& \hspace{0.5cm}-\frac{(n-1)(n-2)}{2}|f(0)|^{2}-\frac{n(n-1)}{2}|\psi
^{2}(0)f(0)|^{2}\\
& \hspace{0.5cm}+n(n-2)|\psi(0)f(0)|^{2}+|\psi^{n}(0)f(0)|^{2}.
\end{align*}

\end{proposition}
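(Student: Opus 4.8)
The plan is to combine three facts about the finite Blaschke product $\psi$. Since $S_{1}^{2}({\mathbb{D}})=S^{2}({\mathbb{D}})$ is an algebra containing $\psi$, all of the quantities $\|\psi^{j}f\|$, $\|\psi^{j}f\|_{1}$, $D(\psi^{j}f)$ are finite. Rearranging (\ref{twonorms}) gives, for every $g\in S^{2}({\mathbb{D}})$,
\[
\|g\|_{1}^{2}=2\|g\|^{2}-2\|g\|_{H^{2}}^{2}-3D(g)+|g(0)|^{2},
\]
so I would compute $\|\psi^{n}f\|_{1}^{2}$ by evaluating separately $\|\psi^{n}f\|^{2}$, $\|\psi^{n}f\|_{H^{2}}^{2}$ and $D(\psi^{n}f)$, and then re-expressing each $\|\psi^{j}f\|^{2}$ ($j=0,1,2$) back in terms of $\|\psi^{j}f\|_{1}^{2}$, $\|f\|_{H^{2}}^{2}$ and $D(\psi^{j}f)$ via the same identity. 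The inputs are: $M_{\psi}$ is a $3$-isometry on $S_{1}^{2}({\mathbb{D}})$ by Theorem \ref{polyg}; $M_{\psi}$ is an isometry on $H^{2}({\mathbb{D}})$ since $\psi$ is inner, so $\|\psi^{n}f\|_{H^{2}}^{2}=\|f\|_{H^{2}}^{2}$ and $(\psi^{n}f)(0)=\psi(0)^{n}f(0)$; and $M_{\psi}$ satisfies the Dirichlet growth relation of Lemma \ref{ffldirs}.

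First I would apply Lemma \ref{polygrowth} to $T=M_{\psi}$, $m=3$, $h=f$. Using $\langle\beta_{0}(M_{\psi})f,f\rangle=\|f\|^{2}$, $\langle\beta_{1}(M_{\psi})f,f\rangle=\|\psi f\|^{2}-\|f\|^{2}$, and $\langle\beta_{2}(M_{\psi})f,f\rangle=\|\psi^{2}f\|^{2}-2\|\psi f\|^{2}+\|f\|^{2}$, and collecting the coefficients of $\binom{n}{0},\binom{n}{1},\binom{n}{2}$, one gets
\[
\|\psi^{n}f\|^{2}=\tfrac{(n-1)(n-2)}{2}\|f\|^{2}-n(n-2)\|\psi f\|^{2}+\tfrac{n(n-1)}{2}\|\psi^{2}f\|^{2}.
\]
Substituting this, together with $\|\psi^{n}f\|_{H^{2}}^{2}=\|f\|_{H^{2}}^{2}$ and $(\psi^{n}f)(0)=\psi(0)^{n}f(0)$, into the rearranged identity for $g=\psi^{n}f$, and then replacing each $2\|\psi^{j}f\|^{2}$ by $\|\psi^{j}f\|_{1}^{2}+2\|f\|_{H^{2}}^{2}+3D(\psi^{j}f)-|\psi(0)^{j}f(0)|^{2}$, the total coefficient of $\|f\|_{H^{2}}^{2}$ is $(n-1)(n-2)-2n(n-2)+n(n-1)-2=0$, so those terms drop out.

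It then remains to check that the Dirichlet-seminorm terms cancel as well, i.e. that
\[
D(\psi^{n}f)=\tfrac{(n-1)(n-2)}{2}D(f)-n(n-2)D(\psi f)+\tfrac{n(n-1)}{2}D(\psi^{2}f).
\]
By Lemma \ref{ffldirs} one has $D(\psi^{2}f)=2D(\psi f)-D(f)$ and $D(\psi^{n}f)=(1-n)D(f)+nD(\psi f)$; substituting the former into the right-hand side above and simplifying produces exactly $(1-n)D(f)+nD(\psi f)$, as required. Once the $\|f\|_{H^{2}}^{2}$ and $D(\cdot)$ contributions vanish, what survives is precisely the claimed combination of $\|\psi^{2}f\|_{1}^{2},\|\psi f\|_{1}^{2},\|f\|_{1}^{2}$ together with the four point-mass terms $|f(0)|^{2},|\psi(0)f(0)|^{2},|\psi^{2}(0)f(0)|^{2},|\psi^{n}(0)f(0)|^{2}$ with the stated coefficients. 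There is no genuine obstacle here — the argument is bookkeeping with binomial coefficients — but the structural point worth emphasizing is that Lemma \ref{ffldirs} provides exactly the affine-in-$n$ Dirichlet growth needed to match the quadratic-in-$n$ combination forced by the $3$-isometry relation, and this is what makes the auxiliary $\|\cdot\|_{H^{2}}^{2}$ and $D(\cdot)$ terms cancel. The main practical risk is an arithmetic slip in tracking the $\binom{n}{k}$ coefficients, which I would guard against by checking the final identity at $n=0,1,2$, where it collapses either to a triviality or to the $3$-isometry identity itself expressed through (\ref{twonorms}).
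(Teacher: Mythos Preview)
Your proposal is correct and follows essentially the same route as the paper: both proofs combine the $3$-isometry identity for $M_{\psi}$ on $S_{1}^{2}({\mathbb{D}})$ (via Lemma \ref{polygrowth} and Theorem \ref{polyg}), the $H^{2}$-isometry of $M_{\psi}$, the Dirichlet growth of Lemma \ref{ffldirs}, and the norm relation (\ref{twonorms}) to pass between $\|\cdot\|^{2}$ and $\|\cdot\|_{1}^{2}$. Your write-up simply makes the cancellation of the $\|f\|_{H^{2}}^{2}$ and $D(\cdot)$ contributions more explicit than the paper does.
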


\begin{proof}
By Theorem \ref{polyg}, $M_{\psi}$ is a $3$-isometry on $S_{1}^{2}%
({\mathbb{D}}).$ Then by Lemma \ref{polygrowth}, we obtain for $n\geq2$,
\begin{align*}
\Vert\psi^{n}f\Vert^{2} & =\Vert f\Vert^{2}+n\left( \Vert\psi f\Vert^{2}-\Vert
f\Vert^{2}\right) +\frac{n(n-1)}{2}\left( \Vert\psi^{2}f\Vert^{2}%
-2\Vert\psi f\Vert^{2}+\Vert f\Vert^{2}\right) \\
& =\frac{n(n-1)}{2}\Vert\psi^{2}f\Vert^{2}-n(n-2)\Vert\psi f\Vert^{2}%
+\frac{(n-1)(n-2)}{2}\Vert f\Vert^{2}.
\end{align*}
Since $M_{\psi}$ is an isometry on $H^{2}({\mathbb{D}})$, by (\ref{twonorms})
and Lemma \ref{ffldirs}, we get
\begin{align*}
& \Vert\psi^{n}f\Vert_{1}^{2}-|\psi^{n}(0)f(0)|^{2}\\
& =\frac{n(n-1)}{2}\left( \Vert\psi^{2}f\Vert_{1}^{2}-|\psi^{2}(0)f(0)|^{2}%
\right) \\
& -n(n-2)\left( \Vert\psi f\Vert_{1}^{2}-|\psi(0)f(0)|^{2}\right)
+\frac{(n-1)(n-2)}{2}\left( \Vert f\Vert_{1}^{2}-|f(0)|^{2}\right)
\end{align*}
The conclusion then follows from the above equation.
\end{proof}

We remark that $M_{z}$ is also a 3-isometry on $S_{2}^{2}({\mathbb{D}})$, and
\begin{align}
\label{imisttgs}\|f\|_{S_{2}^{2}({\mathbb{D}})}^{2}=\|f\|_{S^{2}({\mathbb{D}%
})}^{2}+\|f\|_{H^{2}}^{2}-|f(0)|^{2}.
\end{align}
Thus we can also obtain Proposition \ref{igtisdn} from (\ref{imisttgs}).

Now we can show that $M_{\psi}$ is an $m$-isometry on $S^{2}({\mathbb{D}})$ if
and only if $\psi$ is a constant. The case for $m=1$ was proved in \cite{AHP}.
Notice that Lemma \ref{polygrowth} shows that if $T$ is a strict $m$-isometry,
then for some $h$, $\langle\beta_{m-1}(T)h,h\rangle\neq0$, so $\Vert
T^{n}h\Vert^{2}$ grows like a polynomial in $n$ of degree $m-1$ as
$n\rightarrow\infty$.

\begin{theorem}
Assume $\psi\in S^{2}({\mathbb{D}})$ is not a constant. Then $M_{\psi}$ can
not be an $m$-isometry on $S^{2}({\mathbb{D}})$ for any $m\geq1$.
\end{theorem}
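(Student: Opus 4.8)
The plan is to argue by contradiction: suppose $M_\psi$ is an $m$-isometry on $S^2(\mathbb D)$ with $\psi$ non-constant, and derive a contradiction from the growth rate of $\|\psi^n f\|_1^2$. First I would invoke the same spectral argument used in the proof of Theorem \ref{lessone}: by Lemma 1.21 of \cite{AS} the spectrum of an $m$-isometry is $\overline{\mathbb D}$ or lies in $\mathbb T$, and since $\sigma(M_\psi)=\psi(\overline{\mathbb D})$ by (\ref{spectrum}), combined with the Fredholm-index argument and the fact that $\psi'\in H^2(\mathbb D)$ forces $\psi(\mathbb T)\subsetneq$ (cannot fill $\overline{\mathbb D}$), we conclude $\psi(\mathbb T)=\mathbb T$, so $\psi$ is a finite Blaschke product. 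So the theorem reduces to showing: if $\psi$ is a non-constant finite Blaschke product, then $M_\psi$ is \emph{not} an $m$-isometry on $S^2(\mathbb D)$ for any $m$.

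The key tool is Proposition \ref{igtisdn}, which gives an exact formula for $\|\psi^n f\|_1^2$ as a quadratic polynomial in $n$ (the $n^2$ coefficient being $\tfrac12(\|\psi^2 f\|_1^2 - 2\|\psi f\|_1^2 + \|f\|_1^2 - |\psi^2(0)f(0)|^2 + 2|\psi(0)f(0)|^2 - \dots)$, plus the $|\psi^n(0)f(0)|^2$ term which decays since $|\psi(0)|<1$ unless $\psi(0)=0$). Since $\|\psi^n f\|_1^2$ is (up to an exponentially small correction) a polynomial in $n$ of degree $\le 2$, if $M_\psi$ were an $m$-isometry then by Lemma \ref{polygrowth} applied on $S^2(\mathbb D)$, $\|\psi^n f\|_1^2$ would have to be \emph{exactly} a polynomial in $n$ of degree $\le m-1$; but more importantly one can read off its degree directly. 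The substance is: I would choose $f=1$ (or more generally a convenient $f$), compute the leading coefficient of the quadratic in $n$ from Proposition \ref{igtisdn}, and show it is strictly positive for a non-constant finite Blaschke product. That forces $\|\psi^n 1\|_1^2 \sim c n^2$ with $c>0$, hence $M_\psi$ is at best a (strict) $3$-isometry on $S^2(\mathbb D)$.

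The final step is to rule out $m=3$ (and hence all $m$). The obstruction to $M_\psi$ being a genuine $3$-isometry on $S^2(\mathbb D)$ is that the formula from Proposition \ref{igtisdn} is valid only for $n\ge 2$, not for $n=0,1$; equivalently, $\|\psi^n f\|_1^2$ agrees with a quadratic polynomial in $n$ only for $n\ge 2$, whereas being a $3$-isometry requires the polynomial identity $\langle\beta_3(M_\psi)f,f\rangle=0$ to hold with the polynomial interpolation anchored at $n=0,1,2$. So I would verify that the unique quadratic agreeing with $\|\psi^n f\|_1^2$ at $n=2,3,4$ does \emph{not} give the correct value at $n=0$ or $n=1$ for a suitable test function $f$ — concretely, plugging $n=1$ into the Proposition \ref{igtisdn} formula and comparing with the true value $\|\psi f\|_1^2$ yields a discrepancy involving $D(\psi f)-D(f)$ and the difference between $S_1^2$ and $S^2$ norms near the origin. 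The main obstacle I anticipate is this last bookkeeping: carefully tracking the low-order ($n=0,1$) terms, in particular the interplay between the $|f(0)|^2$ corrections in (\ref{twonorms}) and the $|\psi^n(0)f(0)|^2$ terms, to exhibit a clean non-vanishing $\beta_3$ quantity; choosing $f\equiv 1$ together with, say, $\psi(z)=z$ or a single Blaschke factor should make the computation transparent and confirm $\beta_2(M_\psi)\ne 0$ while $\beta_3(M_\psi)\ne 0$ as well, so $M_\psi$ is not an $m$-isometry for any $m$.
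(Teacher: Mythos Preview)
Your overall framework matches the paper's: first reduce to finite Blaschke products via the spectral/Fredholm argument of Theorem \ref{lessone}, then use the polynomial-growth characterization from Lemma \ref{polygrowth} together with Proposition \ref{igtisdn}. The reduction step and the growth bound ruling out strict $m$-isometries for $m\ge 4$ are essentially as in the paper (the paper does the $m\ge 4$ bound by a direct estimate $\|\psi^n h\|_1^2\le \|h\|_\infty^2+(n\|\psi' h\|_{H^2}+\|h'\|_{H^2})^2$, but your version via Proposition \ref{igtisdn} is fine too).

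Your treatment of the $m=3$ case, however, misidentifies the obstruction. You write that Proposition \ref{igtisdn} ``is valid only for $n\ge 2$'' and that this is why $\|\psi^n f\|_1^2$ fails to be an exact quadratic. In fact the identity in Proposition \ref{igtisdn} holds for \emph{all} $n\ge 0$ (it comes from Lemma \ref{polygrowth} on $S_1^2(\mathbb D)$, and one checks directly that the right-hand side returns $\|f\|_1^2$ at $n=0$ and $\|\psi f\|_1^2$ at $n=1$). The genuine obstruction is the last term $|\psi^n(0)f(0)|^2=|\psi(0)|^{2n}|f(0)|^2$, which is not polynomial in $n$ when $f(0)\ne 0$. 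The paper exploits this in one line: applying Proposition \ref{igtisdn} with $n=3$ and $h(0)\ne 0$ gives
\[
\|\psi^{3}h\|_{1}^{2}-3\|\psi^{2}h\|_{1}^{2}+3\|\psi h\|_{1}^{2}-\|h\|_{1}^{2}
=|h(0)|^{2}\bigl(|\psi(0)|^{6}-3|\psi(0)|^{4}+3|\psi(0)|^{2}-1\bigr)
=-|h(0)|^{2}\bigl(1-|\psi(0)|^{2}\bigr)^{3},
\]
which vanishes only if $|\psi(0)|=1$, a contradiction; this simultaneously rules out $m\le 2$ since any $2$-isometry is a $3$-isometry. Your plan of fitting a quadratic at $n=2,3,4$ and comparing at $n=0,1$ can be made to work, but it is a roundabout version of this same identity, and your closing suggestion to test with ``$\psi(z)=z$ or a single Blaschke factor'' would only handle those particular symbols, not the general finite Blaschke product. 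The clean fix is simply to read off $\langle\beta_3(M_\psi)h,h\rangle_1$ directly from Proposition \ref{igtisdn} as above.
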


\begin{proof}
If $M_{\psi}$ on $S^{2}({\mathbb{D}})$ is an $m$-isometry, the same argument
as in Theorem \ref{lessone} shows $\psi$ is a finite Blaschke product. Note
that for $h\in S^{2}({\mathbb{D}})$,
\begin{align*}
\Vert\psi^{n}h\Vert_{1}^{2} & =|\psi^{n}(0)h(0)|^{2}+\Vert(\psi^{n}h)^{\prime
}\Vert_{H^{2}}^{2}\\
& \leq\Vert h\Vert_{\infty}^{2}+\left( n\Vert\psi^{\prime}h\Vert_{H^{2}}+\Vert
h^{\prime}\Vert_{H^{2}}\right) ^{2}.
\end{align*}
That is, $\Vert M_{\psi}^{n}h\Vert_{1}^{2}$ grows at most as a polynomial in
$n$ of degree $2$. Therefore $M_{\psi}$ can not be a strict $m$-isometry on
$S^{2}({\mathbb{D}})$ for $m\geq4$.

If $M_{\psi}$ is a $3$-isometry on $S^{2}({\mathbb{D}})$, then for $h\in
S^{2}({\mathbb{D}})$,
\[
\Vert\psi^{3}h\Vert_{1}^{2}-3\Vert\psi^{2}h\Vert_{1}^{2}+3\Vert\psi h\Vert
_{1}^{2}-\Vert h\Vert_{1}^{2}=0.
\]
Hence by Proposition \ref{igtisdn}, we obtain
\[
-|\psi^{3}(0)h(0)|^{2}+3|\psi^{2}(0)h(0)|^{2}-3|\psi(0)h(0)|^{2}+|h(0)|^{2}=0.
\]
So $-|\psi^{3}(0)|^{2}+3|\psi^{2}(0)|^{2}-3|\psi(0)|^{2}+1=0$. It then follows
that $|\psi(0)|=1$, therefore $\psi$ is a constant, which is a contradiction.
Thus $M_{\psi}$ is not a $3$-isometry on $S^{2}({\mathbb{D}}).$

If $M_{\psi}$ is a $m$-isometry on $S^{2}({\mathbb{D}})$ for $m\leq2$, then
$M_{\psi}$ is also a $3$-isometry on $S^{2}({\mathbb{D}})$, which is
impossible. Hence $M_{\psi}$ is not an $m$-isometry on $S^{2}({\mathbb{D}})$
for $m\leq2$. The proof is complete.
\end{proof}

\section{Blaschke products and reducing subspaces}

The Beurling invariant subspace theorem for $M_{z}$ on the Hardy space $H^{2}
$ says an invariant subspace of $M_{z}$ on $H^{2}$ is of the form $\theta
H^{2}$ for some inner function $\theta.$ Since $M_{\theta}$ on $H^{2}$ is
still an isometry, with a finite multiplicity if $\theta$ is a finite Blaschke
product, the invariant subspace of $M_{\theta}$ is essentially known by using
Beurling-Lax-Halmos theorem. The invariant subspace of $M_{z}$ on $S_{2}%
^{2}({\mathbb{D)}}$ (equivalently on $S^{2}({\mathbb{D)}}$, $S_{1}%
^{2}({\mathbb{D)}}$, $S_{2}^{2}({\mathbb{D)}}$) was nicely described by
Korenblum in 1972 \cite{Ko72}. Furthermore, by \cite{Bo86}, if $H_{0}$ is an
invariant subspace of $M_{z}$ on $S_{2}^{2}({\mathbb{D)}}$ (in fact for
$D_{\alpha}$ with $\alpha>1$), then $H_{0}$ has the codimension one property,
i.e., the dimension of $H_{0}\ominus zH_{0}$ is one. Since $M_{\theta}$ on
$S_{1}^{2}({\mathbb{D)}}$ for a finite Blaschke product on $S_{1}%
^{2}({\mathbb{D)}}$ is a $3$-isometry, the following question seems to be natural.

\begin{problem}
What are the invariant subspaces of $M_{\theta}$ on $S_{1}^{2}({\mathbb{D)}}?$
\end{problem}

A first step would be to generalize Korenblum's result to vector-valued
version of $S_{1}^{2}({\mathbb{D)}}$. While the invariant subspaces of
$M_{\theta}$ are the same with respect to equivalent norms, the reducing
subspaces of $M_{\theta}$ could be different with respect to equivalent norms.
Starting with the paper \cite{Zhu} in 2000 where the reducing subspaces of
$M_{\theta}$ for a Blaschke product of order $2$ on the Bergman space were
characterized, there have been intensive research activities to understand the
reducing subspaces of $M_{\theta}$ on the Bergman space for a general Blaschke
product, see the recent book \cite{GH}. A recent paper \cite{Luo} discusses
the reducing subspaces of $M_{\theta}$ on the Dirichlet space.

In this section we prove that if $\psi$ is a Blaschke product of order $2,$
then $M_{\psi}$ is irreducible on $S^{2}({\mathbb{D)}}$ unless $\psi$ is
equivalent to $z^{2}$. We are unable to extend this result to other equivalent
norms since a part of our argument exploiting the fact that $M_{\psi}$ on
$S^{2}({\mathbb{D)}}$ is not a $3$-isometry, but close to being a $3$-isometry
as shown in Proposition \ref{igtisdn}.

Let $\varphi_{\alpha}$ be a automorphism of the unit disk and
$P_{\alpha}(\zeta)$ be the Poisson kernel,
\[
\varphi_{\alpha}=\frac{\alpha-z}{1-\overline{\alpha}z},P_{\alpha}(\zeta
)=\frac{1-|\alpha|^{2}}{|\zeta-\alpha|^{2}}.
\]

\begin{lemma}
\label{htipfpp} For $\alpha\in{\mathbb{D}},k\geq0$,
\[
\langle\varphi_{\alpha}^{\prime},z^{k}\varphi_{\alpha}^{\prime}\rangle_{H^{2}%
}=\frac{1+|\alpha|^{2}}{1-|\alpha|^{2}}\overline{\alpha^{k}}+k\overline
{\alpha^{k}}.
\]

\end{lemma}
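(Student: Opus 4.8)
The plan is to compute the inner product by expanding $\varphi_{\alpha}'$ as a power series and using orthonormality of $\{z^{n}\}$ in $H^{2}$. First I would record that
\[
\varphi_{\alpha}'(z)=\frac{-(1-\overline{\alpha}z)-(\alpha-z)(-\overline{\alpha})}{(1-\overline{\alpha}z)^{2}}=\frac{|\alpha|^{2}-1}{(1-\overline{\alpha}z)^{2}},
\]
so that $\varphi_{\alpha}'(z)=(|\alpha|^{2}-1)\sum_{n\geq0}(n+1)\overline{\alpha}^{n}z^{n}$, using the standard expansion $(1-w)^{-2}=\sum_{n\geq0}(n+1)w^{n}$. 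Then $z^{k}\varphi_{\alpha}'(z)=(|\alpha|^{2}-1)\sum_{n\geq0}(n+1)\overline{\alpha}^{n}z^{n+k}$, whose $z^{m}$ coefficient (for $m\geq k$) is $(|\alpha|^{2}-1)(m-k+1)\overline{\alpha}^{m-k}$.

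Next I would pair the two series in $H^{2}$: since $\langle z^{m},z^{m}\rangle_{H^{2}}=1$,
\[
\langle\varphi_{\alpha}',z^{k}\varphi_{\alpha}'\rangle_{H^{2}}=(|\alpha|^{2}-1)^{2}\sum_{m\geq k}(m+1)\alpha^{m}\,\overline{(m-k+1)\overline{\alpha}^{m-k}}=(1-|\alpha|^{2})^{2}\,\overline{\alpha^{k}}\sum_{j\geq0}(j+k+1)(j+1)|\alpha|^{2j},
\]
after reindexing $m=j+k$ and noting $\alpha^{m}\alpha^{-(m-k)}\cdot(\text{conjugation})$ contributes $\overline{\alpha^{k}}|\alpha|^{2(m-k)}$. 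So everything reduces to evaluating $\sum_{j\geq0}(j+1)(j+k+1)t^{j}$ at $t=|\alpha|^{2}$ and multiplying by $(1-t)^{2}$. Writing $(j+1)(j+k+1)=(j+1)(j+2)+(k-1)(j+1)$ and using $\sum_{j\geq0}(j+1)(j+2)t^{j}=2(1-t)^{-3}$ and $\sum_{j\geq0}(j+1)t^{j}=(1-t)^{-2}$, the sum equals $2(1-t)^{-3}+(k-1)(1-t)^{-2}$, and hence $(1-t)^{2}$ times it is $\dfrac{2}{1-t}+(k-1)=\dfrac{2-(1-t)(1-k)}{1-t}=\dfrac{1+t}{1-t}+k$. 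With $t=|\alpha|^{2}$ this is exactly $\dfrac{1+|\alpha|^{2}}{1-|\alpha|^{2}}+k$, giving the claimed value after restoring the factor $\overline{\alpha^{k}}$.

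There is no serious obstacle here; the only mild bookkeeping point is the reindexing in the second display (keeping track of which power of $\alpha$ survives conjugation) and the algebraic identity for the power-series sum, both of which are routine. One could alternatively argue more slickly by recognizing $|\varphi_{\alpha}'(\zeta)|^{2}=P_{\alpha}(\zeta)^{2}$ on $\mathbb{T}$ and reducing the inner product to Fourier coefficients of $P_{\alpha}^{2}$, but the direct power-series computation above is self-contained and avoids invoking boundary integrals.
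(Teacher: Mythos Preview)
Your proof is correct and follows essentially the same route as the paper: expand $\varphi_\alpha'$ as a power series, pair coefficients in $H^2$, and reduce to evaluating $(1-t)^2\sum_{j\ge0}(j+1)(j+k+1)t^{j}$ at $t=|\alpha|^2$. The only difference is cosmetic: the paper splits $(j+1)(j+k+1)=(j+1)^2+k(j+1)$ and uses $\sum_{j\ge0}(j+1)^2 t^{j}=(1+t)(1-t)^{-3}$, whereas you split it as $(j+1)(j+2)+(k-1)(j+1)$; both yield the same value $\tfrac{1+t}{1-t}+k$. One small slip to fix: in your second display the first factor should carry $\overline{\alpha}^{\,m}$, not $\alpha^{m}$ (the $m$-th Taylor coefficient of $\varphi_\alpha'$ is $(|\alpha|^2-1)(m+1)\overline{\alpha}^{\,m}$); with that correction the product $\overline{\alpha}^{\,m}\cdot\alpha^{\,m-k}=\overline{\alpha^{k}}|\alpha|^{2(m-k)}$ is exactly what you claim in the parenthetical remark.
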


\begin{proof}
Note that $\varphi_{\alpha}^{\prime}(z)=\frac{-1+|\alpha|^{2}}{(1-\overline
{\alpha}z)^{2}}=(-1+|\alpha|^{2})\sum_{n\geq0}(n+1)\overline{\alpha^{n}}z^{n}%
$. Thus
\begin{align}
\langle\varphi_{\alpha}^{\prime},z^{k}\varphi_{\alpha}^{\prime}\rangle_{H^{2}}
& =(-1+|\alpha|^{2})^{2}\left\langle \sum_{n\geq0}(n+1)\overline{\alpha^{n}%
}z^{n},z^{k}\sum_{n\geq0}(n+1)\overline{\alpha^{n}}z^{n}\right\rangle _{H^{2}%
}\nonumber\\
& =(-1+|\alpha|^{2})^{2}\sum_{l\geq0}(l+k+1)(l+1)\overline{\alpha^{l+k}}%
\alpha^{l}\nonumber\\
& =(-1+|\alpha|^{2})^{2}\sum_{l\geq0}(l+1)^{2}|\alpha|^{2l}\overline
{\alpha^{k}}+k\overline{\alpha^{k}}.\label{sum}%
\end{align}
Since
\[
\frac{1}{(1-|\alpha|^{2})^{3}}=\sum_{l\geq0}\frac{(l+1)(l+2)}{2}|\alpha|^{2l},
\]
we have
\begin{align*}
\sum_{l\geq0}(l+1)^{2}|\alpha|^{2l} = \frac{1+|\alpha|^{2}}{(1-|\alpha|^{2})^{3}}.
\end{align*}
Hence by (\ref{sum}), we get
\[
\langle\varphi_{\alpha}^{\prime},z^{k}\varphi_{\alpha}^{\prime}\rangle_{H^{2}%
}=\frac{1+|\alpha|^{2}}{1-|\alpha|^{2}}\overline{\alpha^{k}}+k\overline
{\alpha^{k}}.
\]
\end{proof}

\begin{lemma}
\label{ajfbistd} Assume $\psi=z\varphi_{\alpha},\alpha\in{\mathbb{D}}$. Then,
on $S^{2}({\mathbb{D}}),$
\[
M_{\psi}^{\ast}\psi(z)=3+\frac{1+|\alpha|^{2}}{1-|\alpha|^{2}}+\sum_{k\geq
1}\left[ (2k+2)\overline{\alpha^{k}}+\frac{1+|\alpha|^{2}}{1-|\alpha|^{2}%
}\overline{\alpha^{k}}\right] \frac{z^{k}}{k^{2}}.
\]

\end{lemma}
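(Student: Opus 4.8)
The plan is to compute the action of $M_\psi^*$ on the function $\psi(z)=z\varphi_\alpha(z)$ directly from the power series of $\psi$ and the inner product on $S^2({\mathbb{D}})$. Write $\psi(z)=\sum_{n\geq 0}\psi_n z^n$; since $\varphi_\alpha'(z)=(-1+|\alpha|^2)\sum_{n\geq 0}(n+1)\overline{\alpha^n}z^n$ and $\psi=z\varphi_\alpha$, one extracts the coefficients $\psi_n$ in closed form (equivalently, $\psi_n$ is determined by the geometric-type series for $\varphi_\alpha=\alpha-(1-|\alpha|^2)\sum_{n\geq 1}\overline{\alpha^{n-1}}z^n$, shifted by one power of $z$). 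In the $S^2$-norm the reproducing structure is $\langle z^m,z^n\rangle_{S^2}=0$ for $m\neq n$, $\|1\|_{S^2}^2=1$, and $\|z^n\|_{S^2}^2=n^2$ for $n\geq 1$. Because $M_\psi^* z^k = \sum_{n}\langle z^k,\psi z^n\rangle_{S^2}\, z^n/\|z^n\|_{S^2}^2$ is awkward, I would instead use the cleaner identity $M_\psi^*\psi = \sum_{k\geq 0} \langle \psi,\psi\, \widehat{e_k}\rangle_{S^2}\,\widehat{e_k}$ where $\widehat{e_0}=1$ and $\widehat{e_k}=z^k/k$ for $k\geq 1$ is the orthonormal basis; that is, the $k$-th coefficient of $M_\psi^*\psi$ is $\langle \psi, z^k\psi\rangle_{S^2}/\|z^k\|_{S^2}^2$ for $k\geq 1$, and $\langle\psi,\psi\rangle_{S^2}$ for $k=0$.

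The key step is therefore to evaluate $\langle \psi, z^k\psi\rangle_{S^2}$ for each $k\geq 0$. Using $\|f\|_{S^2}^2=|f(0)|^2+\|f'\|_{H^2}^2$, polarization gives
\[
\langle \psi, z^k\psi\rangle_{S^2}=\psi(0)\,\overline{(z^k\psi)(0)}+\langle \psi', (z^k\psi)'\rangle_{H^2}.
\]
For $k\geq 1$ the boundary term $\overline{(z^k\psi)(0)}$ vanishes, so only $\langle\psi',(z^k\psi)'\rangle_{H^2}$ survives; expanding $(z^k\psi)'=kz^{k-1}\psi+z^k\psi'$ and $\psi'=\varphi_\alpha+z\varphi_\alpha'$, this reduces to a finite combination of inner products of the shape $\langle z^a\varphi_\alpha^{(i)}, z^b\varphi_\alpha^{(j)}\rangle_{H^2}$ with $i,j\in\{0,1\}$. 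Lemma \ref{htipfpp} handles the $\langle \varphi_\alpha',z^k\varphi_\alpha'\rangle_{H^2}$ terms, and the remaining pieces $\langle \varphi_\alpha, z^k\varphi_\alpha\rangle_{H^2}=\overline{\alpha^k}$ (for $k\geq 0$, by a direct geometric-series computation, since $\|\varphi_\alpha\|_{H^2}^2=1$ and $\varphi_\alpha$ is inner) and the mixed terms $\langle \varphi_\alpha,z^k\varphi_\alpha'\rangle_{H^2}$, $\langle\varphi_\alpha',z^k\varphi_\alpha\rangle_{H^2}$ are again elementary. Assembling these, dividing by $\|z^k\|_{S^2}^2=k^2$, and simplifying should collapse to the stated coefficient $\big[(2k+2)\overline{\alpha^k}+\tfrac{1+|\alpha|^2}{1-|\alpha|^2}\overline{\alpha^k}\big]/k^2$; the constant term $k=0$ is $\|\psi\|_{S^2}^2=|\psi(0)|^2+\|\psi'\|_{H^2}^2$, and since $\psi(0)=0$ one computes $\|\psi'\|_{H^2}^2=\|\varphi_\alpha+z\varphi_\alpha'\|_{H^2}^2=1+2\,\mathrm{Re}\langle\varphi_\alpha,z\varphi_\alpha'\rangle_{H^2}+\|z\varphi_\alpha'\|_{H^2}^2$, which by Lemma \ref{htipfpp} and the elementary identities gives $3+\tfrac{1+|\alpha|^2}{1-|\alpha|^2}$, matching the claimed value.

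The main obstacle is purely organizational: keeping track of the several $H^2$ inner products produced by expanding $(z^k\psi)'$ and $\psi'$, and checking that the mixed terms $\langle\varphi_\alpha,z^k\varphi_\alpha'\rangle_{H^2}$ and $\langle\varphi_\alpha',z^k\varphi_\alpha\rangle_{H^2}$ combine with the $k z^{k-1}\psi$ contribution to produce exactly the factor $(2k+2)$ rather than some other linear function of $k$. I expect the cleanest route is to compute $\langle\psi',(z^k\psi)'\rangle_{H^2}$ all at once by writing $\psi'(z)=\sum_{n\geq 0}(n+1)\psi_{n+1}z^n$ with $\psi_{n+1}$ known explicitly, so that $\langle\psi',(z^k\psi)'\rangle_{H^2}=\sum_{n\geq 0}(n+1)(n+k+1)\,\psi_{n+k+1}\,\overline{\psi_{n+1}}$, and then evaluate this single geometric-type sum using $\sum_{l\geq 0}(l+1)^2|\alpha|^{2l}=\tfrac{1+|\alpha|^2}{(1-|\alpha|^2)^3}$ as in the proof of Lemma \ref{htipfpp}. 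This avoids any case analysis and makes the emergence of the two terms transparent.
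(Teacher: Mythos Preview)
Your approach is essentially the same as the paper's: expand $M_\psi^*\psi$ in the orthonormal basis $\{1,z/1,z^2/2,\dots\}$ of $S^2(\mathbb D)$, reduce each coefficient to $\langle\psi',(z^k\psi)'\rangle_{H^2}$, split $\psi'=\varphi_\alpha+z\varphi_\alpha'$ and $(z^k\psi)'=(k+1)z^k\varphi_\alpha+z^{k+1}\varphi_\alpha'$, and invoke Lemma~\ref{htipfpp} for the $\langle\varphi_\alpha',z^k\varphi_\alpha'\rangle_{H^2}$ piece. The one tactical difference is that the paper handles the mixed cross terms in a single stroke via the boundary identity $\zeta\,\varphi_\alpha'(\zeta)\,\overline{\varphi_\alpha(\zeta)}=P_\alpha(\zeta)$ on $\mathbb T$, so that e.g.\ $\langle z\varphi_\alpha',z^k\varphi_\alpha\rangle_{H^2}=\int_{\mathbb T}P_\alpha(\zeta)\overline{\zeta^k}\,\tfrac{|d\zeta|}{2\pi}=\overline{\alpha^k}$; this is slicker than summing the power series term by term, though your direct-summation alternative in the last paragraph works just as well.

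One small slip to fix: you write $\langle\varphi_\alpha,z^k\varphi_\alpha\rangle_{H^2}=\overline{\alpha^k}$, but since $\varphi_\alpha$ is inner this inner product is $\langle 1,z^k\rangle_{H^2}=0$ for $k\geq 1$. This actually helps---that term simply drops out---so the bookkeeping you were worried about is a little lighter than you anticipated, and the remaining cross terms (handled either by the Poisson identity or by your geometric sums) do indeed combine to give the factor $2k+2$.
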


\begin{proof}
Notice that $(z\varphi_{\alpha}^{\prime}\overline{\varphi_{\alpha}}%
)(\zeta)=P_{\alpha}(\zeta)=\frac{1-|\alpha|^{2}}{|\zeta-\alpha|^{2}},\zeta
\in{\mathbb{T}}$, we have
\begin{align*}
\langle M_{\psi}^{\ast}\psi,1\rangle_{1} & =\langle\psi,\psi\rangle
_{1}=\langle(z\varphi_{\alpha})^{\prime},(z\varphi_{\alpha})^{\prime}%
\rangle_{H^{2}}\\
& =\langle\varphi_{\alpha}+z\varphi_{\alpha}^{\prime},\varphi_{\alpha
}+z\varphi_{\alpha}^{\prime}\rangle_{H^{2}}\\
& =1+2\int_{{\mathbb{T}}}P_{\alpha}(\zeta)\frac{|d\zeta|}{2\pi}+\langle
\varphi_{\alpha}^{\prime},\varphi_{\alpha}^{\prime}\rangle_{H^{2}}\\
& =3+\frac{1+|\alpha|^{2}}{1-|\alpha|^{2}},
\end{align*}
where we used Lemma \ref{htipfpp} in the last equality.

For $k\geq1$,
\begin{align*}
\langle M_{\psi}^{*}\psi,z^{k}\rangle_{1} & =\langle\psi,z^{k}\psi\rangle
_{1}=\langle(z\varphi_{\alpha})^{\prime}, (z^{k+1}\varphi_{\alpha})^{\prime}%
\rangle_{H^{2}}\\
& =\langle\varphi_{\alpha}+z\varphi_{\alpha}^{\prime}, (k+1)z^{k}\varphi_{\alpha
}+z^{k+1}\varphi_{\alpha}^{\prime}\rangle_{H^{2}}\\
& =0+\int_{\mathbb{T}}P_{\alpha}(\zeta)\overline{\zeta^{k}}\frac{|d\zeta
|}{2\pi}+(k+1)\int_{\mathbb{T}}P_{\alpha}(\zeta)\overline{\zeta^{k}}%
\frac{|d\zeta|}{2\pi}+\langle\varphi_{\alpha}^{\prime}, z^{k}\varphi_{\alpha
}^{\prime}\rangle_{H^{2}}\\
& =(k+2)\overline{\alpha^{k}}+\frac{1+|\alpha|^{2}}{1-|\alpha|^{2}}%
\overline{\alpha^{k}}+k\overline{\alpha^{k}}\\
& =(2k+2)\overline{\alpha^{k}}+\frac{1+|\alpha|^{2}}{1-|\alpha|^{2}}%
\overline{\alpha^{k}}.
\end{align*}
Therefore
\begin{align*}
M_{\psi}^{*}\psi(z) & =\langle M_{\psi}^{*}\psi,1\rangle_{1}+\sum_{k\geq
1}\langle M_{\psi}^{*}\psi,z^{k}\rangle_{1}\frac{z^{k}}{k^{2}}\\
& =3+\frac{1+|\alpha|^{2}}{1-|\alpha|^{2}}+\sum_{k\geq1}\left[ (2k+2)\overline
{\alpha^{k}}+\frac{1+|\alpha|^{2}}{1-|\alpha|^{2}}\overline{\alpha^{k}}\right]
\frac{z^{k}}{k^{2}}.
\end{align*}
\end{proof}

For two finite Blaschke products $\psi$ and $\phi$, $\psi$ is called to be
equivalent to $\phi$ if there exist $|a|=1,\alpha\in{\mathbb{D}}$ such that
$\phi(z)=a\varphi_{\alpha}(\psi(z))$. Note that if $\psi$ is equivalent to
$\phi$, then $M_{\psi}$ and $M_{\phi}$ have the same reducing subspaces. Now
we can prove one of the main results in this section.

\begin{theorem}
\label{rspistdti} Assume $\psi$ is a finite Blaschke product of order 2. Then
$M_{\psi}$ is reducible on $S^{2}({\mathbb{D}})$ if and only if $\psi$ is
equivalent to $z^{2}$.
\end{theorem}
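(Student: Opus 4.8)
The plan is to split into the two implications. The easy direction is that if $\psi$ is equivalent to $z^2$ then $M_\psi$ is reducible: since equivalent Blaschke products give unitarily equivalent multiplication operators with the same reducing subspaces, it suffices to note that $M_{z^2}$ on $S^2(\mathbb{D})$ is reducible, because the norm on $S^2(\mathbb{D})$ splits along the even/odd grading of the monomial basis $\{z^n\}$ and $M_{z^2}$ preserves each of the two subspaces $\overline{\operatorname{span}}\{z^{2k}\}$ and $\overline{\operatorname{span}}\{z^{2k+1}\}$. So the whole content is the forward direction.

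For the forward direction, I would argue by contraposition: assume $\psi$ is a finite Blaschke product of order $2$ that is \emph{not} equivalent to $z^2$, and show $M_\psi$ is irreducible. Up to replacing $\psi$ by an equivalent product (which does not change the reducing subspaces), I may normalize $\psi$ to a convenient form. A Blaschke product of order $2$ with a zero at $0$ is $z\varphi_\alpha$ for some $\alpha\in\mathbb{D}$; and $z\varphi_\alpha$ is equivalent to $z^2$ precisely when $\alpha=0$. So it is enough to take $\psi = z\varphi_\alpha$ with $\alpha\neq 0$ and prove $M_\psi$ has no nontrivial reducing subspace. Suppose $P$ is a nonzero orthogonal projection commuting with $M_\psi$ (hence with $M_\psi^*$). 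The standard strategy: $P$ commutes with $M_\psi^* M_\psi$ and with the self-commutator $[M_\psi^*, M_\psi]$; one shows these operators (or rather $P$'s interaction with $\psi$ itself) force $P$ to be scalar. Concretely, $P1 \in \{1\}^{\perp\perp}$-type reasoning: since $M_\psi^* K_w = \overline{\psi(w)} K_w$, the projection $P$ commutes with every $M_\psi^*$, and one analyzes the vector $h:=P1$. Because $M_\psi^* M_\psi$ and $P$ commute, and using that $\psi$ has order $2$ so $M_\psi$ is (essentially) a norm-$2$ extension of a weighted-shift-like structure, one gets a rigidity statement. The key computational input is Lemma~\ref{ajfbistd}, which gives the explicit expansion of $M_\psi^*\psi(z)$; this pins down enough Fourier coefficients of the relevant operator entries that the commutant condition becomes a system of scalar equations whose only solutions are $P=0$ or $P=I$.

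More precisely, here is the route I expect to work. Let $P$ reduce $M_\psi$; write $h = P1$. From $P M_\psi = M_\psi P$ one gets $P\psi^n = \psi^n h$-type relations, i.e. $P$ acts as ``multiplication by $h/1$'' along the cyclic subspace generated by $1$ under $M_\psi$. Then $\langle Ph, 1\rangle = \langle h,h\rangle = \langle h, 1\rangle$ forces $h$ to be a projection-valued-ish idempotent pattern; combined with $P = P^*$ one deduces $h$ is, in the multiplier algebra, an idempotent, so (since $S_1^2$ and $S^2$ functions are continuous on $\overline{\mathbb{D}}$) $h$ is $0$ or $1$ \emph{if} $1$ is cyclic. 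The real issue is that $1$ need not be cyclic for $M_\psi$ on $S^2(\mathbb{D})$ — indeed when $\psi=z^2$ it is not. So the argument must use the hypothesis $\alpha\neq 0$ to show that the cyclic subspace $\bigvee_n \psi^n \cdot S^2(\mathbb{D})$-pieces, together with the two-dimensional ``defect'' coming from $\operatorname{ind} M_\psi = -2$, leave no room for a nontrivial reducing decomposition. Here one exploits exactly the remark preceding the theorem: $M_\psi$ on $S^2(\mathbb{D})$ fails to be a $3$-isometry but, via Proposition~\ref{igtisdn}, its deviation from a $3$-isometry is controlled by the single scalar quantity $-|\psi^n(0)f(0)|^2 + \cdots$; testing the reducing relation $P M_\psi^{*k}M_\psi^k = M_\psi^{*k}M_\psi^k P$ against this controlled deviation yields, after using Lemma~\ref{ajfbistd}, that the off-diagonal block of $P$ (with respect to the candidate even/odd-type splitting) must vanish unless $\alpha=0$.

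The main obstacle, as flagged in the paper itself, is the absence of a clean $3$-isometry structure on $S^2(\mathbb{D})$: one cannot simply quote the $3$-isometric functional calculus as on $S_1^2(\mathbb{D})$. The proof has to substitute, in place of that structure, the explicit perturbation formula of Proposition~\ref{igtisdn} together with the explicit action of $M_\psi^*$ from Lemma~\ref{ajfbistd}, and then push through the resulting finite system of equations relating the matrix entries of the reducing projection $P$ in the monomial basis; showing this system has only the trivial solutions when $\alpha \neq 0$ — i.e. that the correction terms $|\psi(0)|$, $|\psi^2(0)|$, $\dots$ genuinely obstruct any hidden symmetry — is where the work lies.
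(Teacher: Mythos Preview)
Your overall strategy is right: normalize to $\psi = z\varphi_\alpha$, and use the deviation-from-$3$-isometry formula (Proposition~\ref{igtisdn}) together with the explicit expansion of $M_\psi^*\psi$ (Lemma~\ref{ajfbistd}) to derive a contradiction when $\alpha\neq 0$. But there is a genuine gap in the mechanism: you never identify \emph{which} vectors to test the reducing relation on, and your attempt via $h=P1$ stalls exactly where you note it does (cyclicity of $1$ fails, and the ``idempotent in the multiplier algebra'' heuristic does not go through on $S^2(\mathbb D)$).

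The missing idea is the \emph{wandering subspace} reduction. If $\mathcal M$ reduces $M_\psi$, then since $\dim(S^2(\mathbb D)\ominus\psi S^2(\mathbb D))=2$ one gets $\dim(\mathcal M\ominus\psi\mathcal M)=\dim(\mathcal M^\perp\ominus\psi\mathcal M^\perp)=1$; pick generators $f\in\mathcal M\ominus\psi\mathcal M$ and $g\in\mathcal M^\perp\ominus\psi\mathcal M^\perp$. Both lie in $S^2(\mathbb D)\ominus\psi S^2(\mathbb D)=\operatorname{Span}\{1,K_\alpha\}$, so $f=a+bK_\alpha$ and $g=c+dK_\alpha$. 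Now polarize Proposition~\ref{igtisdn}: expanding $\|\psi^n(f+i^\ell g)\|_1^2$ for $\ell=0,1,2,3$ and using $\psi(0)=0$ together with $\langle\psi^n f,\psi^k g\rangle_1=0$ for all $n,k$ collapses to
\[
0=\langle\psi^n f,\psi^n g\rangle_1=-\tfrac{(n-1)(n-2)}{2}\,f(0)\overline{g(0)},
\]
hence $f(0)\overline{g(0)}=0$. Say $f(0)=0$, so $f$ is a multiple of $1-K_\alpha$; then $\langle g,f\rangle_1=0$ forces $d=0$, i.e.\ $1\in\mathcal M^\perp$. Testing one more orthogonality, $0=\langle\psi\cdot 1,\psi(1-K_\alpha)\rangle_1=M_\psi^*\psi(0)-M_\psi^*\psi(\alpha)$, is now a single scalar equation that Lemma~\ref{ajfbistd} shows fails when $\alpha\neq 0$. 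This is precisely the ``finite system of equations'' you were gesturing at, but it only becomes tractable once you have located $f$ and $g$ in the two-dimensional wandering space; your proposal never gets there.
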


\begin{proof}
If $\psi$ is equivalent to $z^{2}$, since $M_{z^{2}}$ has two minimal reducing
subspaces on $S^{2}({\mathbb{D}})$ \cite{SZ02}, we have $M_{\psi}$ is
reducible on $S^{2}({\mathbb{D}})$.

Conversely, if $M_{\psi}$ is reducible on $S^{2}({\mathbb{D}})$, we show that
$\psi$ is equivalent to $z^{2}$. Note that $\varphi_{\psi(0)}(\psi
(z))=az\varphi_{\alpha}(z)$ for some $|a|=1,\alpha\in{\mathbb{D}}$, thus
$\psi$ is equivalent to $z\varphi_{\alpha}$. So we can suppose $\psi
=z\varphi_{\alpha}$, and it is enough to show $\alpha=0$. We prove it by
contradiction. Suppose $\alpha\neq0$, and ${\mathcal{M}}$ is a reducing
subspace of $M_{\psi}$ on $S^{2}({\mathbb{D}})$. Since $\operatorname{dim}%
(S^{2}({\mathbb{D}})\ominus\psi S^{2}({\mathbb{D}}))=2$, we get
$\operatorname{dim}({\mathcal{M}}\ominus\psi{\mathcal{M}})= \operatorname{dim}%
({\mathcal{M}}^{\perp}\ominus\psi{\mathcal{M}}^{\perp})=1$, where
${\mathcal{M}}^{\perp}$ is the orthogonal complement of ${\mathcal{M}}$. Let
$f\in{\mathcal{M}}\ominus\psi{\mathcal{M}},g\in{\mathcal{M}}^{\perp}%
\ominus\psi{\mathcal{M}}^{\perp}$, then $f,g\in S^{2}({\mathbb{D}})\ominus\psi
S^{2}({\mathbb{D}})=Span\{1,K_{\alpha}\}$ where $K_{\alpha}(z)$ is the
reproducing kernel of $S^{2}({\mathbb{D}})$ at $\alpha.$ Hence $f=a+bK_{\alpha
},g=c+dK_{\alpha}$ for some $a,b,c,d\in{\mathbb{C}}$. By Proposition
\ref{igtisdn}, for $l=0,1,2,3$, we have
\begin{align*}
\Vert\psi^{n}(f+i^{l}g)\Vert_{1}^{2} & =\frac{n(n-1)}{2}\Vert\psi_{1}%
^{2}(f+i^{l}g)\Vert_{1}^{2}-n(n-2)\Vert\psi(f+i^{l}g)\Vert_{1}^{2}\\
& +\frac{(n-1)(n-2)}{2}\Vert(f+i^{l}g)\Vert_{1}^{2}-\frac{(n-1)(n-2)}%
{2}|(f+i^{l}g)(0)|^{2},
\end{align*}
here we used $\psi(0)=0$. Notice that $\langle\psi^{n}f,\psi^{k}g\rangle
_{1}=0$ for any $n,k\geq0$, therefore
\begin{align*}
0 & =\langle\psi^{n}f,\psi^{n}g\rangle_{1}=\frac{1}{4}\sum_{l=0}^{3}i^{l}%
\Vert\psi^{n}(f+i^{l}g)\Vert_{1}^{2}\\
& =\frac{n(n-1)}{2}\langle\psi^{2}f,\psi^{2}g)\rangle_{1}-n(n-2)\langle\psi
f,\psi g\rangle_{1}\\
& +\frac{(n-1)(n-2)}{2}\langle f,g\rangle_{1}-\frac{(n-1)(n-2)}{2}%
f(0)\overline{g(0)}\\
& =-\frac{(n-1)(n-2)}{2}f(0)\overline{g(0)}.
\end{align*}
So $f(0)\overline{g(0)}=0$. Without loss of generality, suppose $f(0)=0$, then
$f=a(1-K_{\alpha}),a\in{\mathbb{C}}$, thus $f_{1}=1-K_{\alpha}\in{\mathcal{M}%
}$. By
\[
0=\langle g,f\rangle_{1}=a[c+d-c-dK_{\alpha}(\alpha)],
\]
we obtain $d=0$, so $g_{1}=1\in{\mathcal{M}}^{\perp}$. Hence
\[
0=\langle\psi g_{1},\psi f_{1}\rangle_{1}=\langle\psi,\psi(1-K_{\alpha
})\rangle_{1}=M_{\psi}^{\ast}\psi(0)-M_{\psi}^{\ast}\psi(\alpha).
\]
But by Lemma \ref{ajfbistd}, we have $M_{\psi}^{\ast}\psi(0)\neq M_{\psi
}^{\ast}\psi(\alpha)$, which is a contradiction. Therefore $\alpha=0$. The
proof is complete.
\end{proof}

In the rest of this section, we discuss for a finite Blaschke product $\psi$,
when $M_{\psi}$ is unitarily equivalent to $M_{z^{n}}$.

\begin{proposition}
\label{utrotss} Let $\psi\in S^{2}({\mathbb{D}})$. If $M_{\psi}$ is unitarily
equivalent to $M_{z^{n}}$ for some $n>0$, then $\psi$ is a finite Blaschke
product with $n$ zeros.
\end{proposition}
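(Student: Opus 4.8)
The plan is to prove two things: that $\psi$ must be a finite Blaschke product, and that its order is exactly $n$. Throughout we use that unitarily equivalent operators have the same spectrum, essential spectrum and Fredholm index, and that these data for $M_\psi$ on $S^{2}(\mathbb{D})$ coincide with those computed on the norm‑equivalent space $S_{1}^{2}(\mathbb{D})$ (equivalently $D_{2}$), so that $(\ref{spectrum})$ applies.

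First, $\psi$ is nonconstant, for otherwise $M_\psi$ would be a scalar operator, which $M_{z^{n}}$ is not; thus $\psi\in S^{2}(\mathbb{D})\subseteq A(\mathbb{D})$ is a nonconstant function continuous on $\overline{\mathbb{D}}$. By $(\ref{spectrum})$, $\psi(\overline{\mathbb{D}})=\sigma(M_\psi)=\sigma(M_{z^{n}})=\overline{\mathbb{D}}$, and the open mapping theorem then gives $\psi(\mathbb{D})\subseteq\mathbb{D}$, i.e. $|\psi|\le 1$ on $\overline{\mathbb{D}}$. For the essential spectrum, observe that for $\lambda\in\mathbb{D}$ one has $M_{z^{n}}-\lambda=\prod_{j=1}^{n}(M_z-\mu_j)$, where the $\mu_j$ are the $n$-th roots of $\lambda$ and lie in $\mathbb{D}$; each factor $M_z-\mu_j$ is left invertible with one‑dimensional cokernel on $S^{2}(\mathbb{D})$ (recalled in Section 3), so $M_{z^{n}}-\lambda$ is Fredholm of index $-n$ on $\mathbb{D}$, while it is invertible for $|\lambda|>1$; by local constancy of the index the Fredholm domain of $M_{z^{n}}$ is $\mathbb{C}\setminus\mathbb{T}$, hence $\sigma_e(M_{z^{n}})=\mathbb{T}$. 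Therefore $\sigma_e(M_\psi)=\mathbb{T}$, and since $\sigma_e(M_\psi)=\psi(\mathbb{T})$ (as used in the proof of Theorem \ref{lessone}) we obtain $\psi(\mathbb{T})=\mathbb{T}$. Combined with $|\psi|\le 1$, this forces $\psi$ to be a finite Blaschke product, say of order $d\ge 1$: since $|\psi|=1$ on $\mathbb{T}$ and $\psi\in A(\mathbb{D})$, the zero set of $\psi$ in $\mathbb{D}$ is finite, and if $B$ is the Blaschke product carrying these zeros then $\psi/B$ and $B/\psi$ are holomorphic on $\mathbb{D}$ with modulus $1$ on $\mathbb{T}$, so by the maximum modulus principle $\psi/B$ is a unimodular constant.

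It remains to show $d=n$. Since $\psi$ is a finite Blaschke product, $M_\psi$ is injective with closed range on $S^{2}(\mathbb{D})$, whence $\ker M_\psi^{*}=S^{2}(\mathbb{D})\ominus\psi\,S^{2}(\mathbb{D})$; applying the same to $M_{z^{n}}$ and using the unitary equivalence,
\[
\dim\bigl(S^{2}(\mathbb{D})\ominus\psi\,S^{2}(\mathbb{D})\bigr)=\dim\ker M_\psi^{*}=\dim\ker M_{z^{n}}^{*}=\dim\bigl(S^{2}(\mathbb{D})\ominus z^{n}S^{2}(\mathbb{D})\bigr)=n.
\]
So the proof reduces to the codimension-$d$ identity $\dim\bigl(S^{2}(\mathbb{D})\ominus\psi\,S^{2}(\mathbb{D})\bigr)=d$ for a finite Blaschke product $\psi$ of order $d$; equivalently, that $\psi\,S^{2}(\mathbb{D})$ is exactly the set of $f\in S^{2}(\mathbb{D})$ vanishing at each zero of $\psi$ to at least its multiplicity, which has codimension $d$ as the joint kernel of $d$ independent point/derivative functionals. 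The only substantive step is the division property: if $f\in S^{2}(\mathbb{D})$ and $f(a)=0$ with $a\in\mathbb{D}$, then $f/(z-a)\in S^{2}(\mathbb{D})$. Indeed, with $g=f/(z-a)$ one has $g'=(f'-g)/(z-a)$ and $f'-g$ vanishes at $a$, so $g'$ is holomorphic, and since $|z-a|\ge 1-|a|$ on $\mathbb{T}$ one gets $\|g'\|_{H^{2}}\le(1-|a|)^{-1}\bigl(\|f'\|_{H^{2}}+\|g\|_{H^{2}}\bigr)<\infty$, using $\|g\|_{H^{2}}\le(1-|a|)^{-1}\|f\|_{H^{2}}$ and $S^{2}(\mathbb{D})\subseteq H^{2}(\mathbb{D})$; division by a general factor $\varphi_a=(a-z)/(1-\overline{a}z)$ then follows because $M_{1/(1-\overline{a}z)}$ is invertible on $S^{2}(\mathbb{D})$, and one iterates over the $d$ factors of $\psi$, giving $d=n$.

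The main obstacle is this codimension-one / division property; the rest is routine manipulation of spectra and indices. Two points deserve care: $\psi\in S^{2}(\mathbb{D})$ is a priori only continuous (not holomorphic) up to $\mathbb{T}$, so the passage ``$|\psi|=1$ on $\mathbb{T}\Rightarrow\psi$ a finite Blaschke product'' must go through the maximum modulus argument on $\psi/B$ and $B/\psi$ rather than through reflection; and one needs the $S^{2}$-analogues, established for $S_{1}^{2}(\mathbb{D})$, that the multiplier algebra is the space itself and that functions holomorphic near $\overline{\mathbb{D}}$ lie in the space (used to see $M_{1/(1-\overline{a}z)}$ is invertible). As an alternative to the $\dim\ker M_\psi^{*}$ computation one may compute the index directly: for $\lambda\in\mathbb{D}$, $\psi-\lambda=(1-\overline{\lambda}\psi)(-\varphi_\lambda\circ\psi)$ with $M_{1-\overline{\lambda}\psi}$ invertible (its reciprocal being a rational function with no poles on $\overline{\mathbb{D}}$) and $-\varphi_\lambda\circ\psi$ a finite Blaschke product of order $d$, so $\text{ind}(M_\psi-\lambda)=-d$, which must equal $\text{ind}(M_{z^{n}}-\lambda)=-n$; but evaluating the former index still rests on the same division fact applied to the simple factors $\varphi_{a_j}$.
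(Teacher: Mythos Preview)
Your proof is correct and follows essentially the same approach as the paper: use the equality of spectra and essential spectra to conclude $\psi(\overline{\mathbb{D}})=\overline{\mathbb{D}}$ and $\psi(\mathbb{T})=\mathbb{T}$, hence $\psi$ is a finite Blaschke product, and then match the orders via $\dim(S^{2}(\mathbb{D})\ominus\psi S^{2}(\mathbb{D}))=\dim(S^{2}(\mathbb{D})\ominus z^{n}S^{2}(\mathbb{D}))=n$. The paper's proof is a terse sketch asserting these facts, whereas you supply the justifications (the computation of $\sigma_e(M_{z^{n}})$, the maximum-modulus argument that $|\psi|=1$ on $\mathbb{T}$ forces $\psi$ to be a finite Blaschke product, and the division property giving codimension $d$), so your version is a strict elaboration of the same argument.
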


\begin{proof}
Note that $\sigma(M_{z^{n}})=\overline{{\mathbb{D}}}$. If $M_{\psi}$ is
unitarily equivalent to $M_{z^{n}}$, then $\sigma(M_{\psi})=\psi
(\overline{{\mathbb{D}}})=\overline{{\mathbb{D}}}$ and $\sigma_e(M_{\psi})=\psi
({\mathbb{T}})= {\mathbb{T}}$, so $\psi$ is a finite Blaschke product. By $\operatorname{dim}(S^{2}({\mathbb{D}%
})\ominus\psi S^{2}({\mathbb{D}}))= \operatorname{dim}(S^{2}({\mathbb{D}%
})\ominus z^{n}S^{2}({\mathbb{D}}))=n$, we conclude that $\psi$ is a finite
Blaschke product of order $n$.
\end{proof}

\begin{lemma}
\label{scfpazk} For $k\geq0$, let $b(k)=\int_{{\mathbb{T}}}P_{\alpha}%
(\zeta)P_{-\alpha}(\zeta)\overline{\zeta^{k}}\frac{d\zeta}{2\pi}$. Then
\[
b(k)=
\begin{cases}
\frac{1-|\alpha|^{2}}{1+|\alpha|^{2}}\overline{\alpha^{2l}},\quad k=2l,\\
0,\quad k=2l+1.
\end{cases}
\]

\end{lemma}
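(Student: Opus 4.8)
The plan is to recognize $b(k)$ as a Fourier coefficient of the product $P_{\alpha}P_{-\alpha}$ and then to collapse that product to a single Poisson kernel. First I would record the elementary identity, valid for $\zeta\in{\mathbb{T}}$,
\[
|\zeta-\alpha|^{2}\,|\zeta+\alpha|^{2}=|(\zeta-\alpha)(\zeta+\alpha)|^{2}=|\zeta^{2}-\alpha^{2}|^{2}.
\]
Since $P_{-\alpha}(\zeta)=(1-|\alpha|^{2})/|\zeta+\alpha|^{2}$, this gives
\[
P_{\alpha}(\zeta)P_{-\alpha}(\zeta)=\frac{(1-|\alpha|^{2})^{2}}{|\zeta^{2}-\alpha^{2}|^{2}}=\frac{1-|\alpha|^{2}}{1+|\alpha|^{2}}\cdot\frac{1-|\alpha^{2}|^{2}}{|\zeta^{2}-\alpha^{2}|^{2}}=\frac{1-|\alpha|^{2}}{1+|\alpha|^{2}}\,P_{\alpha^{2}}(\zeta^{2}),
\]
where I used $1-|\alpha|^{4}=(1-|\alpha|^{2})(1+|\alpha|^{2})$ and $|\alpha^{2}|=|\alpha|^{2}$, and noted that $\zeta^{2}$ again lies on ${\mathbb{T}}$.

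Next I would insert the standard Fourier expansion of the Poisson kernel on the circle, $P_{\beta}(w)=\sum_{n\geq0}\overline{\beta^{n}}\,w^{n}+\sum_{n\geq1}\beta^{n}\,\overline{w^{n}}$, with $\beta=\alpha^{2}$ and $w=\zeta^{2}$, obtaining
\[
P_{\alpha^{2}}(\zeta^{2})=\sum_{n\geq0}\overline{\alpha^{2n}}\,\zeta^{2n}+\sum_{n\geq1}\alpha^{2n}\,\overline{\zeta^{2n}}.
\]
Substituting this into the defining integral for $b(k)$ and integrating term by term against $\overline{\zeta^{k}}$, using $\int_{{\mathbb{T}}}\zeta^{j}\overline{\zeta^{k}}\,\frac{|d\zeta|}{2\pi}=\delta_{j,k}$, only the summand whose exponent equals $k$ survives: if $k=2l$ is even it contributes $\overline{\alpha^{2l}}$, and if $k$ is odd no term matches. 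Hence $b(2l)=\frac{1-|\alpha|^{2}}{1+|\alpha|^{2}}\overline{\alpha^{2l}}$ and $b(2l+1)=0$, as claimed.

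The argument is essentially bookkeeping; the one genuine idea is the factorization $|\zeta-\alpha|^{2}|\zeta+\alpha|^{2}=|\zeta^{2}-\alpha^{2}|^{2}$, which turns the product of two Poisson kernels into a constant times a single Poisson kernel in the variable $\zeta^{2}$, and the only point needing care is that one should expand $P_{\alpha^{2}}(\zeta^{2})$ and integrate termwise rather than attempt the substitution $w=\zeta^{2}$, which is two-to-one on ${\mathbb{T}}$. As an alternative one could compute $b(k)$ directly as the convolution $b(k)=\sum_{j\in{\mathbb{Z}}}\widehat{P_{\alpha}}(j)\,\widehat{P_{-\alpha}}(k-j)$ of the Fourier coefficient sequences of $P_{\alpha}$ and $P_{-\alpha}$, splitting the sum according to the signs of $j$ and $k-j$ and summing the three resulting geometric series; the vanishing for odd $k$ then appears as an explicit cancellation between two of those pieces. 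I would present the factorization argument as the main proof since it is shorter and more transparent.
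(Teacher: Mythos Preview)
Your proof is correct and follows essentially the same route as the paper: both arguments begin with the key factorization $|\zeta-\alpha|^{2}|\zeta+\alpha|^{2}=|\zeta^{2}-\alpha^{2}|^{2}$, so that $P_{\alpha}(\zeta)P_{-\alpha}(\zeta)=(1-|\alpha|^{2})^{2}/|\zeta^{2}-\alpha^{2}|^{2}$, and then read off the Fourier coefficients. The only cosmetic difference is that the paper expands $(1-|\alpha|^{2})^{2}/|\zeta^{2}-\alpha^{2}|^{2}$ directly as $(1-|\alpha|^{2})^{2}\bigl|\sum_{n\ge0}\overline{\zeta^{2n}}\alpha^{2n}\bigr|^{2}$ and sums the resulting geometric series, whereas you observe that this quantity equals $\frac{1-|\alpha|^{2}}{1+|\alpha|^{2}}P_{\alpha^{2}}(\zeta^{2})$ and invoke the standard Fourier expansion of the Poisson kernel; your packaging is slightly slicker but the underlying computation is the same.
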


\begin{proof}
Note that $P_{\alpha}(\zeta)P_{-\alpha}(\zeta)=\frac{(1-|\alpha|^{2})^{2}%
}{|\zeta^{2}-\alpha^{2}|^{2}}=(1-|\alpha|^{2})^{2}\left\vert \sum_{n\geq
0}\overline{\zeta^{2n}}\alpha^{2n}\right\vert ^{2}$. Thus if $k=2l+1$, then
$b(2l+1)=0$. If $k=2l$, then
\begin{align*}
b(2l) & =\int_{{\mathbb{T}}}P_{\alpha}(\zeta)P_{-\alpha}(\zeta)\overline
{\zeta^{2l}}\frac{d\zeta}{2\pi}=\int_{{\mathbb{T}}}(1-|\alpha|^{2}%
)^{2}\left\vert \sum_{n\geq0}\overline{\zeta^{2n}}\alpha^{2n}\right\vert
^{2}\overline{\zeta^{2l}}\frac{d\zeta}{2\pi}\\
& =(1-|\alpha|^{2})^{2}\sum_{n\geq0}|\alpha|^{4n}\overline{\alpha^{2l}}%
=\frac{1-|\alpha|^{2}}{1+|\alpha|^{2}}\overline{\alpha^{2l}}.
\end{align*}

\end{proof}

\begin{lemma}
\label{cfpeppns} Let $\psi=\varphi_{\alpha}\varphi_{-\alpha},\alpha
\in{\mathbb{D}}$. Then, on $S^{2}({\mathbb{D}}),$
\begin{align*}
M_{\psi}^{\ast}\psi(z) & =|\alpha|^{4}+2\frac{1+|\alpha|^{2}}{1-|\alpha|^{2}%
}+2\frac{1-|\alpha|^{2}}{1+|\alpha|^{2}}\\
& +\sum_{l\geq1}\left[ \frac{1+|\alpha|^{2}}{1-|\alpha|^{2}}2\overline
{\alpha^{2l}}+8l\overline{\alpha^{2l}}+\frac{1-|\alpha|^{2}}{1+|\alpha|^{2}%
}\overline{\alpha^{2l}}\right] \frac{z^{2l}}{4l^{2}}.
\end{align*}

\end{lemma}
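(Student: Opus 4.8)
The plan is to read off the Taylor coefficients of $M_{\psi}^{\ast}\psi$ one by one, exactly as in the proof of Lemma \ref{ajfbistd}. Since $\{1\}\cup\{z^{k}:k\geq1\}$ is orthogonal in $S^{2}({\mathbb{D}})$ with $\|1\|_{1}^{2}=1$ and $\|z^{k}\|_{1}^{2}=k^{2}$, one has
\[
M_{\psi}^{\ast}\psi(z)=\langle M_{\psi}^{\ast}\psi,1\rangle_{1}+\sum_{k\geq1}\langle M_{\psi}^{\ast}\psi,z^{k}\rangle_{1}\frac{z^{k}}{k^{2}},\qquad\langle M_{\psi}^{\ast}\psi,z^{k}\rangle_{1}=\langle\psi,z^{k}\psi\rangle_{1}.
\]
Using $\langle f,g\rangle_{1}=f(0)\overline{g(0)}+\langle f',g'\rangle_{H^{2}}$ and $\psi(0)=\varphi_{\alpha}(0)\varphi_{-\alpha}(0)=-\alpha^{2}$, the $k=0$ term is $|\alpha|^{4}+\|\psi'\|_{H^{2}}^{2}$, while for $k\geq1$ it reduces to $\langle\psi',(z^{k}\psi)'\rangle_{H^{2}}$ since $(z^{k}\psi)(0)=0$. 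Thus the whole computation comes down to these $H^{2}$ pairings.

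Next I would expand by the product rule, $\psi'=\varphi_{\alpha}'\varphi_{-\alpha}+\varphi_{\alpha}\varphi_{-\alpha}'$ and $(z^{k}\psi)'=kz^{k-1}\varphi_{\alpha}\varphi_{-\alpha}+z^{k}\varphi_{\alpha}'\varphi_{-\alpha}+z^{k}\varphi_{\alpha}\varphi_{-\alpha}'$, so that $\langle\psi',(z^{k}\psi)'\rangle_{H^{2}}$ becomes a sum of six boundary integrals over ${\mathbb{T}}$. Each is evaluated with the toolkit from Lemma \ref{ajfbistd}: (i) $|\varphi_{\pm\alpha}|\equiv1$ on ${\mathbb{T}}$, which deletes the inner factors (e.g.\ $\langle\varphi_{\alpha}'\varphi_{-\alpha},z^{k}\varphi_{\alpha}'\varphi_{-\alpha}\rangle_{H^{2}}=\langle\varphi_{\alpha}',z^{k}\varphi_{\alpha}'\rangle_{H^{2}}$); (ii) the identity $\zeta\,\varphi_{\pm\alpha}'(\zeta)\overline{\varphi_{\pm\alpha}(\zeta)}=P_{\pm\alpha}(\zeta)$ on ${\mathbb{T}}$ together with the reality of $P_{\pm\alpha}$, to rewrite every mixed product $\varphi'\overline{\varphi}$ (which is $\zeta^{-1}P$) or $\varphi\overline{\varphi'}$ (which is $\zeta P$) in terms of $P_{\pm\alpha}$; (iii) $\int_{{\mathbb{T}}}P_{\pm\alpha}(\zeta)\overline{\zeta^{k}}\,\frac{|d\zeta|}{2\pi}=\overline{(\pm\alpha)^{k}}$; (iv) Lemma \ref{htipfpp} for the two terms that collapse to $\langle\varphi_{\pm\alpha}',z^{k}\varphi_{\pm\alpha}'\rangle_{H^{2}}$; and (v) Lemma \ref{scfpazk} for the two cross terms, which collapse to $b(k)=\int_{{\mathbb{T}}}P_{\alpha}P_{-\alpha}\overline{\zeta^{k}}\,\frac{|d\zeta|}{2\pi}$.

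Finally I would assemble the six contributions. Since $b(k)=0$ for odd $k$ by Lemma \ref{scfpazk}, and the $\varphi_{-\alpha}$-terms carry a sign $(-1)^{k}$ that cancels against the $\varphi_{\alpha}$-terms for odd $k$, all odd Taylor coefficients of $M_{\psi}^{\ast}\psi$ vanish, as in the claimed formula. For $k=2l$ one collects the surviving terms, divides by $\|z^{2l}\|_{1}^{2}=4l^{2}$, and simplifies; the $k=0$ case yields the constant term after adding $|\alpha|^{4}$. The only step needing care is the term $\varphi_{-\alpha}\overline{\varphi_{-\alpha}'}$: it is not covered by (ii) directly but is the complex conjugate of $\zeta^{-1}P_{-\alpha}$, hence equals $\zeta P_{-\alpha}$ because $P_{-\alpha}$ is real-valued, and this is precisely what lets both cross terms reduce to the single quantity $b(k)$. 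Beyond that, the main "obstacle" is just the bookkeeping of six conjugation-sensitive integrals. As an independent check of the algebra one can avoid boundary integrals entirely: $\psi(z)=\varphi_{\alpha}(z)\varphi_{-\alpha}(z)=\frac{z^{2}-\alpha^{2}}{1-\overline{\alpha}^{2}z^{2}}=-\varphi_{\alpha^{2}}(z^{2})$, so the Taylor coefficients of $\psi$, $\psi'$ and $(z^{k}\psi)'$ are explicit series in $|\alpha|^{4}$ and $\langle\psi',(z^{2l}\psi)'\rangle_{H^{2}}$ can be summed in closed form.
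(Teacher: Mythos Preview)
Your proposal is correct and follows essentially the same approach as the paper: compute $\langle\psi,z^{k}\psi\rangle_{1}$ via the $S^{2}$ inner product, expand $\psi'$ and $(z^{k}\psi)'$ by the product rule, and evaluate the resulting six $H^{2}$ pairings using the Poisson-kernel identity $\zeta\varphi_{\pm\alpha}'\overline{\varphi_{\pm\alpha}}=P_{\pm\alpha}$ on ${\mathbb{T}}$, Lemma~\ref{htipfpp}, and Lemma~\ref{scfpazk}. Your additional remark that $\psi=-\varphi_{\alpha^{2}}(z^{2})$ provides an independent power-series check is a nice bonus not present in the paper, but the core argument is the same.
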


\begin{proof}
By Lemma \ref{htipfpp}, we have
\begin{align*}
\langle M_{\psi}^{*}\psi,1\rangle_{1} & =\langle\psi,\psi\rangle_{1}%
=|\psi(0)|^{2}+\langle\psi^{\prime},\psi^{\prime}\rangle_{H^{2}}\\
& =|\alpha|^{4}+\langle\varphi_{\alpha}^{\prime}\varphi_{-\alpha}%
+\varphi_{\alpha}\varphi_{-\alpha}^{\prime},\varphi_{\alpha}^{\prime}%
\varphi_{-\alpha}+\varphi_{\alpha}\varphi_{-\alpha}^{\prime}\rangle_{H^{2}}\\
& =|\alpha|^{4}+\langle\varphi_{\alpha}^{\prime},\varphi_{\alpha}^{\prime
}\rangle_{H^{2}}+\langle z\varphi_{\alpha}^{\prime}\varphi_{-\alpha}%
,z\varphi_{\alpha}\varphi_{-\alpha}^{\prime}\rangle_{H^{2}}\\
& \hspace{0.5cm}+\langle z\varphi_{\alpha}\varphi_{-\alpha}^{\prime}%
,z\varphi_{\alpha}^{\prime}\varphi_{-\alpha}\rangle_{H^{2}}+\langle
\varphi_{-\alpha}^{\prime},\varphi_{-\alpha}^{\prime}\rangle_{H^{2}}\\
& =|\alpha|^{4}+2\frac{1+|\alpha|^{2}}{1-|\alpha|^{2}}+2\int_{\mathbb{T}%
}P_{\alpha}(\zeta)P_{-\alpha}(\zeta)\frac{d\zeta}{2\pi}\\
& =|\alpha|^{4}+2\frac{1+|\alpha|^{2}}{1-|\alpha|^{2}}+2\frac{1-|\alpha|^{2}%
}{1+|\alpha|^{2}},
\end{align*}
where in the last equality we used Lemma \ref{scfpazk}.

Also, for $k\geq1$,
\begin{align*}
\langle M_{\psi}^{*}\psi,z^{k}\rangle_{1} & =\langle\psi,z^{k}\psi\rangle
_{1}=\langle\psi^{\prime}, (z^{k}\psi)^{\prime}\rangle_{H^{2}}\\
& =\langle\varphi_{\alpha}^{\prime}\varphi_{-\alpha}+\varphi_{\alpha}%
\varphi_{-\alpha}^{\prime}, z^{k}\varphi_{\alpha}^{\prime}\varphi_{-\alpha}%
+z^{k}\varphi_{\alpha}\varphi_{-\alpha}^{\prime} +k z^{k-1}\varphi_{\alpha}%
\varphi_{-\alpha}\rangle_{H^{2}}\\
& =\langle\varphi_{\alpha}^{\prime}, z^{k}\varphi_{\alpha}^{\prime}\rangle_{H^{2}%
}+\langle z\varphi_{\alpha}^{\prime}\varphi_{-\alpha},z^{k+1}\varphi_{\alpha
}\varphi_{-\alpha}^{\prime}\rangle_{H^{2}}+\langle\varphi_{\alpha}^{\prime}, kz^{k-1}\varphi_{\alpha}\rangle_{H^{2}}\\
& \hspace{0.5cm}+\langle z\varphi_{\alpha}\varphi_{-\alpha}^{\prime}, z^{k+1}\varphi_{\alpha}^{\prime}\varphi_{-\alpha}\rangle_{H^{2}}+\langle
\varphi_{-\alpha}^{\prime}, z^{k}\varphi_{-\alpha}^{\prime}\rangle_{H^{2}}%
+\langle\varphi_{-\alpha}^{\prime}, kz^{k-1}\varphi_{-\alpha}\rangle_{H^{2}}\\
& =\frac{1+|\alpha|^{2}}{1-|\alpha|^{2}}\overline{\alpha^{k}}+k\overline
{\alpha^{k}}+2\int_{\mathbb{T}}P_{\alpha}(\zeta)P_{-\alpha}(\zeta
)\overline{\zeta^{k}}\frac{d\zeta}{2\pi}+k\overline{\alpha^{k}}\\
& \hspace{0.5cm}+\frac{1+|\alpha|^{2}}{1-|\alpha|^{2}}\overline{(-\alpha)^{k}%
}+k\overline{(-\alpha)^{k}}+k\overline{(-\alpha)^{k}}\\
& =\frac{1+|\alpha|^{2}}{1-|\alpha|^{2}}\left( \overline{\alpha^{k}}%
+\overline{(-\alpha)^{k}}\right) +2k\left( \overline{\alpha^{k}}%
+\overline{(-\alpha)^{k}}\right) +b(k),
\end{align*}
where $b(k)$ is the value in Lemma \ref{scfpazk}. So by Lemma \ref{scfpazk},
\[
\langle M_{\psi}^{*}\psi,z^{k}\rangle_{1}=
\begin{cases}
\frac{1+|\alpha|^{2}}{1-|\alpha|^{2}}2\overline{\alpha^{2l}}+8l\overline
{\alpha^{2l}}+\frac{1-|\alpha|^{2}}{1+|\alpha|^{2}}\overline{\alpha^{2l}%
},\quad k=2l,\\
0,\quad k=2l+1.
\end{cases}
\]
It follows that
\begin{align*}
M_{\psi}^{*}\psi(z) & =\langle M_{\psi}^{*}\psi,1\rangle_{1}+\sum_{k\geq
1}\langle M_{\psi}^{*}\psi,z^{k}\rangle_{1}\frac{z^{k}}{k^{2}}\\
& =|\alpha|^{4}+2\frac{1+|\alpha|^{2}}{1-|\alpha|^{2}}+2\frac{1-|\alpha|^{2}%
}{1+|\alpha|^{2}}\\
& \hspace{0.5cm}+\sum_{l\geq1}\left[ \frac{1+|\alpha|^{2}}{1-|\alpha|^{2}%
}2\overline{\alpha^{2l}}+8l\overline{\alpha^{2l}}+\frac{1-|\alpha|^{2}%
}{1+|\alpha|^{2}}\overline{\alpha^{2l}}\right] \frac{z^{2l}}{4l^{2}}.
\end{align*}

\end{proof}

Now we can prove the following unitarily equivalent case.

\begin{theorem}
\label{uectistd} Let $\psi\in S^{2}({\mathbb{D}})$. Then $M_{\psi}$ is
unitarily equivalent to $M_{z^{2}}$ on $S^{2}({\mathbb{D}})$ if and only if
$\psi=az^{2}$ for some $|a|=1$.
\end{theorem}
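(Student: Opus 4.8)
The plan is to prove both directions. The easy direction is that if $\psi = az^2$ with $|a|=1$, then $M_\psi = a M_{z^2}$ is trivially unitarily equivalent to $M_{z^2}$ (multiplication by a unimodular constant is implemented by the identity, or one can absorb $a$ into a diagonal unitary on the monomial basis). For the hard direction, suppose $M_\psi$ is unitarily equivalent to $M_{z^2}$. By Proposition \ref{utrotss}, $\psi$ must be a finite Blaschke product of order $2$. As in the proof of Theorem \ref{rspistdti}, after composing with a disk automorphism on the outside (which does not change the unitary equivalence class up to a unimodular constant) we may assume $\psi(0)=0$, so $\psi$ is equivalent to either $z^2$ (the case $\alpha = 0$) or $\psi = \varphi_\alpha \varphi_{-\alpha}$ for some $\alpha \neq 0$ — note that the two zeros of a normalized order-$2$ Blaschke product vanishing at $0$ can be taken to be $0$ and some point, but the symmetric normalization $\varphi_\alpha\varphi_{-\alpha}$ is the convenient representative once one also uses the inner automorphism freedom. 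The goal is then to rule out $\psi = \varphi_\alpha\varphi_{-\alpha}$ with $\alpha \neq 0$.

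To get the contradiction I would exploit the structural rigidity forced by unitary equivalence to $M_{z^2}$. The operator $M_{z^2}$ on $S^2(\mathbb{D})$ has two minimal reducing subspaces (the even- and odd-indexed monomials), and on the "principal" one $M_{z^2}$ acts as a weighted shift with a very specific weight sequence; moreover $M_{z^2}^*(z^2)$ is a scalar (namely $\|z^2\|_1^2/\|1\|_1^2 \cdot 1 = $ const, since $z^2 = M_{z^2}1$ and $M_{z^2}^* M_{z^2} 1$ lies in the span of $1$). Concretely, the clean invariant to use is: if $M_\psi \cong M_{z^2}$, then $M_\psi^* \psi = M_\psi^* M_\psi 1$ must be a constant function, because $M_{z^2}^* M_{z^2} 1 = M_{z^2}^*(z^2)$ is a constant (indeed $\langle z^2, z^k\rangle_1 = 0$ unless $k=0$, wait — $z^2$ is orthogonal to $1$ in $S^2$, so $M_{z^2}^*(z^2) = 0$). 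So the key point is $M_{z^2}^*(z^2) = 0$ since $z^2 \perp \mathbf{1}$ and more generally $z^2 \perp z^k$ for all $k \geq 0$ except... in fact $M_{z^2}^* z^2 = \mathbf{1}\cdot\|z^2\|^2/... $ no: $\langle M_{z^2}^* z^2, z^k \rangle = \langle z^2, z^{k+2}\rangle = 0$ for $k \geq 1$ and $= \langle z^2, z^2 \rangle$ for $k=0$, so $M_{z^2}^* z^2 = c\cdot \mathbf 1$ with $c = \|z^2\|_1^2 = 4$. Thus under the unitary equivalence, $M_\psi^*\psi$ must be a scalar multiple of $\mathbf 1$. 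But Lemma \ref{cfpeppns} gives the explicit expansion of $M_\psi^*\psi$ when $\psi = \varphi_\alpha\varphi_{-\alpha}$, and for $\alpha \neq 0$ the coefficient of $z^2$ (the $l=1$ term) is $\frac{1}{4}\left[\frac{1+|\alpha|^2}{1-|\alpha|^2}2\overline{\alpha^2} + 8\overline{\alpha^2} + \frac{1-|\alpha|^2}{1+|\alpha|^2}\overline{\alpha^2}\right] \neq 0$, a contradiction.

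So the heart of the argument is the reduction "$M_\psi \cong M_{z^2} \implies M_\psi^*M_\psi\mathbf 1 \in \mathbb{C}\mathbf 1$": one observes that $\mathbf 1$ spans $S^2(\mathbb{D}) \ominus \psi S^2(\mathbb{D})$ up to the reducing structure, and more precisely that the unitary $U$ with $UM_\psi = M_{z^2}U$ must carry $\mathbf 1$ (the generator of $\ker M_\psi^*{}^\perp$ appropriately, or rather of the orthocomplement of the range) to a unimodular multiple of $\mathbf 1$ — then $UM_\psi^*M_\psi\mathbf 1 = M_{z^2}^*M_{z^2}U\mathbf 1 \in \mathbb C\,\mathbf 1$, hence $M_\psi^*M_\psi\mathbf 1 \in \mathbb C\,\mathbf 1$ as well. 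Alternatively, and perhaps more robustly, I would combine this with Theorem \ref{rspistdti}: if $M_\psi$ is unitarily equivalent to $M_{z^2}$ then in particular $M_\psi$ is reducible, so by Theorem \ref{rspistdti} $\psi$ is already equivalent to $z^2$; then it only remains to pin down that the unitary equivalence (not merely equivalence of Blaschke products) forces $\psi = az^2$ with $|a|=1$ rather than $a\varphi_\alpha(z^2)$ for $\alpha \neq 0$ — and this last step is exactly where Lemma \ref{cfpeppns} (or Lemma \ref{ajfbistd}) is used, by checking that $M_{a\varphi_\alpha(z^2)}^*(a\varphi_\alpha(z^2))$ fails to be a scalar when $\alpha \neq 0$ whereas $M_{az^2}^*(az^2) = 0$ is.

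The main obstacle I anticipate is the bookkeeping in the reduction step: one must argue carefully that a unitary intertwiner $U$ between $M_\psi$ and $M_{z^2}$ really does send the distinguished cyclic-type vector $\mathbf 1$ to a scalar multiple of $\mathbf 1$ (using that $\mathbf 1$ generates $S^2 \ominus z^2 S^2 \cap (\text{principal reducing subspace})$, together with a wandering-subspace / multiplicity count from $\dim(S^2 \ominus \psi S^2) = 2$), so that the scalar-versus-nonscalar dichotomy of $M_\psi^*\psi$ from Lemma \ref{cfpeppns} can be invoked. Going through Theorem \ref{rspistdti} first makes this cleaner, since reducibility is then automatic and one needs only the final scalar-coefficient comparison.
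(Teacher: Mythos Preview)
Your overall strategy---reduce to $\psi=\varphi_\alpha\varphi_{-\alpha}$ via Proposition~\ref{utrotss} and Theorem~\ref{rspistdti}, then extract a numerical contradiction from Lemma~\ref{cfpeppns}---is the paper's strategy. But the key step you single out as ``the obstacle'' is in fact false, and this is where your argument diverges from the paper's and breaks.

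You want the intertwining unitary $V$ (with $V^\ast M_\psi V=M_{z^2}$) to send $1$ to a scalar multiple of $1$, so that $M_\psi^\ast\psi=M_\psi^\ast M_\psi 1$ is forced to be a constant, to be contradicted by Lemma~\ref{cfpeppns}. However, $V1$ lies in $\ker M_\psi^\ast=\operatorname{span}\{K_\alpha,K_{-\alpha}\}$, and the one--dimensional wandering space of $M_\psi$ restricted to the reducing piece $\mathcal M_1$ is spanned by $K_\alpha+K_{-\alpha}$, \emph{not} by $1$. So your wandering--subspace/multiplicity count yields only $V1\in\mathbb C(K_\alpha\pm K_{-\alpha})$, not $V1\in\mathbb C\cdot 1$; the conclusion ``$M_\psi^\ast\psi$ is constant'' does not follow. (Indeed, if it did follow formally, the argument would be circular: for $\alpha\neq0$ the vector $1$ is not an eigenvector of $M_\psi^\ast M_\psi$.)

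The paper does not attempt to force $V1\in\mathbb C\cdot1$. Instead it sets $f=V1$, deduces $M_\psi^\ast f=0$ and $4f=M_\psi^\ast(\psi f)$ from the corresponding identities for $M_{z^2}$, observes $f=c(K_\alpha\pm K_{-\alpha})$ from the reducing structure, and then \emph{pairs} the relation $4f=M_\psi^\ast(\psi f)$ against $1$ (respectively $z$) to obtain the scalar constraint $M_\psi^\ast\psi(\alpha)+M_\psi^\ast\psi(-\alpha)=8$ (respectively the odd analogue). Lemma~\ref{cfpeppns} then shows this sum exceeds $8$ when $\alpha\neq0$. So the correct invariant is not ``$M_\psi^\ast\psi$ is constant'' but ``$M_\psi^\ast\psi(\alpha)+M_\psi^\ast\psi(-\alpha)=8$,'' which is exactly what the reproducing--kernel form of $f$ delivers.
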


\begin{proof}
If $\psi=az^{2}$ for some $|a|=1$, then it is clear that $M_{\psi}$ is
unitarily equivalent to $M_{z^{2}}$ on $S^{2}({\mathbb{D}})$.

Conversely, if $M_{\psi}$ is unitarily equivalent to $M_{z^{2}}$ on
$S^{2}({\mathbb{D}})$, we show that $\psi=az^{2}$ for some $|a|=1$. By
Proposition \ref{utrotss}, we have $\psi$ is a finite Blaschke product of
order $2$. Since $M_{z^{2}}$ is reducible on $S^{2}({\mathbb{D}})$ with two
minimal reducing subspaces ${\mathcal{M}}_{1}=Span\{1,z^{2},z^{4},\cdots\}$
and ${\mathcal{M}}_{2}=Span\{z,z^{3},z^{5},\cdots\}$, we have $M_{\psi}$ is
reducible on $M_{z^{2}}$ on $S^{2}({\mathbb{D}})$. It then follows from
Theorem \ref{rspistdti} that $\psi$ is equivalent to $z^{2}$, i.e.
$\psi(z)=a\varphi_{\lambda}(z^{2})$ for some $|a|=1,\lambda\in{\mathbb{D}}$,
thus $M_{\psi}$ also has two minimal reducing subspaces ${\mathcal{M}}_{1}$
and ${\mathcal{M}}_{2}$.

Let $\alpha\in{\mathbb{D}}$ be such that $\alpha^{2}=\lambda$, then
$\psi(z)=-a\varphi_{\alpha}(z)\varphi_{-\alpha}(z)$, so $M_{\psi}$ is
unitarily equivalent to $M_{\varphi_{\alpha}\varphi_{-\alpha}}$. Without loss
of generality, suppose $\psi=\varphi_{\alpha}\varphi_{-\alpha}$, and there is
a unitary operator $V$ on $S^{2}({\mathbb{D}})$ such that $V^{\ast}M_{\psi
}V=M_{z^{2}}$. It is then enough to show that $\alpha=0$. We prove it by
contradiction. Suppose $\alpha\neq0$. Let $f=V1$, then $f$ is in
${\mathcal{M}}_{1}$ or ${\mathcal{M}}_{2}$. By $M_{z^{2}}^{\ast}1=0$, we get
$M_{\psi}^{\ast}f=0$, so $f=c_{1}K_{\alpha}+c_{2}K_{-\alpha},c_{1},c_{2}%
\in{\mathbb{C}}$. Note that $VM_{z^{2}}1=M_{\psi}V1=\psi f,M_{z^{2}}^{\ast
}(z^{2})=4$, we obtain that
\[
4f=VM_{z^{2}}^{\ast}(z^{2})=M_{\psi}^{\ast}V(z^{2})=M_{\psi}^{\ast}(\psi f).
\]
Thus
\[
\langle1,4f\rangle_{1}=\langle1,M_{\psi}^{*}(\psi f)\rangle_{1}=\langle
\psi,\psi f\rangle_{1}=\langle M_{\psi}^{*}\psi,f\rangle_{1}.
\]

If $f\in{\mathcal{M}}_{1}$, then $f=c_{1}(K_{\alpha}+K_{-\alpha})$, so
$\langle1,4f\rangle_{1}=8c_{1},\langle M_{\psi}^{*}\psi,f\rangle_{1}%
=c_{1}[M_{\psi}^{*}\psi(\alpha)+M_{\psi}^{*}\psi(-\alpha)]$. Hence $M_{\psi
}^{*}\psi(\alpha)+M_{\psi}^{*}\psi(-\alpha)=8$. But by Lemma \ref{cfpeppns},
we have $M_{\psi}^{*}\psi(\alpha)+M_{\psi}^{*}\psi(-\alpha)>8$, which is a contradiction.

If $f\in{\mathcal{M}}_{2}$, then $f=c_{1}(K_{\alpha}-K_{-\alpha})$. Similarly, by using $\langle z, 4f\rangle_{1} = \langle M_{\psi}^{\ast}(\psi z), f\rangle_{1}$, we also have a contradiction.

The following is the reasoning.

Since $4f = M_{\psi}^{\ast}(\psi f)$, we have $\langle z, 4f\rangle_{1} = \langle M_{\psi}^{\ast}(\psi z), f\rangle_{1}$. Note that
$\langle z, 4f\rangle_{1} = 8 \overline{c_{1}} \alpha$, and $f = c_1\sum_{k \geq 0} \frac{2}{(2k+1)^{2}} \overline{\alpha^{2k+1}}z^{2k+1}$, thus
\begin{align}\label{paetpne}
\left\langle M_{\psi}^{\ast}(\psi z), \sum_{k \geq 0} \frac{2}{(2k+1)^{2}} \overline{\alpha^{2k+1}}z^{2k+1}\right\rangle_{1} = 8 \alpha.
\end{align}
For $k \geq 0$, we have
\begin{align*}
&\langle M_{\psi}^{\ast}(\psi z), z^{2k+1}\rangle_{1}\\
& = \langle (\psi z)', (z^{2k+1}\psi)'\rangle_{H^{2}}\\
& = \langle \psi' z, (2k+1) z^{2k} \psi\rangle_{H^{2}} + \langle \psi' z,  z^{2k+1} \psi'\rangle_{H^{2}}\\
&\hspace{0.5cm} + \langle \psi, (2k+1) z^{2k} \psi\rangle_{H^{2}} + \langle \psi, z^{2k+1} \psi'\rangle_{H^{2}}\\
& = 2(2k+1) \overline{\alpha^{2k}}+\langle \psi', z^{2k}\psi'\rangle_{H^2} + \langle 1, (2k+1) z^{2k}\rangle_{H^2} + 2 \overline{\alpha^{2k}} \quad \text{by Lemmas 4.2, 4,6}\\
& = 2(2k+2) \overline{\alpha^{2k}} + 2 \frac{1+|\alpha|^{2}}{1-|\alpha|^{2}} \overline{\alpha^{2k}} + 2 \frac{1-|\alpha|^{2}}{1+|\alpha|^{2}} \overline{\alpha^{2k}}  \\
&\hspace{0.5cm} + 2k \overline{\alpha^{2k}}  + \langle 1, (2k+1) z^{2k}\rangle_{H^2}.
\end{align*}
It is then clear that (\ref{paetpne}) can not hold.

Therefore $\alpha=0$. The proof is complete.
\end{proof}

\section{Complete Nevanlinna-Pick kernel and some applications}

The general Nevalinna-Pick interpolation problem tries to answer the following question.

\begin{problem}
\label{problem}Let $H$ be a Hilbert function space on a set $X$ with the
reproducing kernel $K_{w}(z),$ let $\lambda_{1},\cdots,\lambda_{N}$ be points
of $X,$ and let $w_{1},\cdots,w_{N}$ be complex numbers. When does there exist
a multiplier $\phi$ of $H$ of norm at most one that interpolates each
$\lambda_{i}$ to $w_{i}?$
\end{problem}

Since $M_{\phi}^{\ast}K_{\lambda_{i}}(z)=\overline{\phi(\lambda_{i}%
)}K_{\lambda_{i}}(z)=\overline{w_{i}}K_{\lambda_{i}}(z),$ a necessary
condition is that the $N\times N$ Pick matrix
\begin{equation}
\left[ (1-\overline{w_{i}}w_{j})K_{\lambda_{i}}(\lambda_{j})\right]
\label{pick}%
\end{equation}
is positive semi-definite. On Hardy space $H^{2}({\mathbb{D)}}$, the above
condition is also sufficient, this is the solution of classical Nevalinna-Pick
interpolation problem. Through the work of Agler, McCarthy, McCullough,
Quiggin and others, the theory of complete Pick kernel emerged, we refer to
\cite{AM} for details.

\begin{definition}
A kernel $K_{w}(z)$ on a set $X$ has the scalar Pick property if the positive
semi-definite condition of Pick matrix as in (\ref{pick}) is also sufficient
for Problem \ref{problem}. The kernel $K_{w}(z)$ has the $s\times t$ matrix
Pick property if the the positive semi-definite condition of Pick matrix for
the matrix-valued Nevalinna-Pick interpolation problem is also sufficient. The
kernel $K_{w}(z)$ has the complete Pick property if it has the $s\times t$
matrix Pick property for all positive integers $s$ and $t.$
\end{definition}

The Szeg\H{o} kernel of the Hardy space $H^{2}({\mathbb{D)}}$ has the complete
Pick property. The kernel for the Dirichlet space $D^{2}({\mathbb{D)}}$ is
also a complete Nevanlinna-Pick kernel \cite{Ag1}. It turns out the complete
Pick property can be characterized nicely while the scalar Pick property or
the matrix Pick property is more difficulty to study. We recall Theorem 7.33
and Lemma 7.38 (Kaluza's Lemma) \cite{AM}.

\begin{theorem}
\label{completep}\cite{AM} Suppose $H$ is a holomorphic Hilbert space on
${\mathbb{D}}$ with kernel function
\begin{equation}
K_{w}(z)={\sum_{n\geq0}}a_{n}(\overline{w}z)^{n}{\text{ where }}%
a_{n}=1/\left\Vert z^{n}\right\Vert ^{2}.\label{cpick}%
\end{equation}
Let the Taylor coefficients of $1/K_{w}(z)$ at zero be given by
\[
\frac{1}{{\sum_{n\geq0}}a_{n}t^{n}}={\sum_{n\geq0}}c_{n}t^{n}.
\]
Then $H$ has the complete Pick property if and only if
\[
c_{n}\leq0{\text{ for all }}n\geq1.
\]
In particular, if $a_{0}=1$ and $a_{n}$ satisfies
\[
a_{n}^{2}\leq a_{n-1}a_{n+1}{\text{ for all }}n\geq1,
\]
Then $H$ has the complete Pick property.
\end{theorem}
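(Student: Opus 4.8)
This result is Theorem 7.33 together with Lemma 7.38 (Kaluza's Lemma) of \cite{AM}, so the plan is really to quote it; but here is the route one would take. The essential input I would invoke is the Agler--McCarthy characterization of complete Pick kernels (building on work of McCullough and Quiggin): a kernel normalized at a base point $x_{0}$ so that $K_{x_{0}}\equiv1$ has the complete Pick property if and only if $1-1/K$ is a positive semi-definite kernel. First I would observe that the signs of the coefficients $c_{n}$ for $n\geq1$ do not change when $K$ is replaced by a positive multiple $cK$ (this only rescales the norm of $H$ and leaves every $M_{\varphi}$ isometric), so one may assume $a_{0}=1$, which is exactly the normalization at $x_{0}=0$. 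Then $1/K_{w}(z)=\sum_{n\geq0}c_{n}(\overline{w}z)^{n}$ with $c_{0}=1$, so $1-1/K_{w}(z)=-\sum_{n\geq1}c_{n}(\overline{w}z)^{n}$, and the characterization turns the claim into the assertion that this last kernel is positive semi-definite precisely when $c_{n}\leq0$ for all $n\geq1$.

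The remaining point rests on the elementary fact that a kernel $\sum_{n\geq0}d_{n}(\overline{w}z)^{n}$ on ${\mathbb{D}}$ is positive semi-definite exactly when every $d_{n}\geq0$. One direction is clear, since $d_{n}(\overline{w}z)^{n}=d_{n}\,z^{n}\overline{w^{n}}$ is a nonnegative multiple of a rank-one positive kernel. For the converse I would evaluate the Pick-type matrix at the $N$-th roots of a small radius $r$ and pair against the characters of the cyclic group $\mathbb{Z}/N\mathbb{Z}$: this produces $d_{m}r^{2m}+d_{m+N}r^{2(m+N)}+\cdots\geq0$ for every $m$ and $N$, and letting $r\to0^{+}$ gives $d_{m}\geq0$ (this is Herglotz's theorem on the circle in another guise). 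Taking $d_{n}=-c_{n}$ for $n\geq1$ then completes the equivalence.

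For the ``in particular'' clause I would prove Kaluza's Lemma directly. Assume $a_{0}=1$, $a_{n}>0$, and $a_{n}^{2}\leq a_{n-1}a_{n+1}$ for $n\geq1$; this says the ratios $a_{n+1}/a_{n}$ are non-decreasing in $n$. Write $1/\sum_{n\geq0}a_{n}t^{n}=1-\sum_{n\geq1}f_{n}t^{n}$, so $f_{n}=-c_{n}$ and it suffices to show $f_{n}\geq0$. Comparing coefficients in $\big(\sum_{n\geq0}a_{n}t^{n}\big)\big(1-\sum_{n\geq1}f_{n}t^{n}\big)=1$ gives $a_{n}=\sum_{k=1}^{n}a_{n-k}f_{k}$ for $n\geq1$, so $f_{1}=a_{1}>0$. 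Inductively, assuming $f_{1},\dots,f_{n-1}\geq0$, I would use the relation at level $n-1$ together with the monotonicity $a_{n}/a_{n-1}\geq a_{n-k}/a_{n-1-k}$ for $1\leq k\leq n-1$ to obtain
\[
a_{n}=\frac{a_{n}}{a_{n-1}}\,a_{n-1}=\frac{a_{n}}{a_{n-1}}\sum_{k=1}^{n-1}a_{n-1-k}f_{k}\geq\sum_{k=1}^{n-1}a_{n-k}f_{k},
\]
hence $f_{n}=a_{n}-\sum_{k=1}^{n-1}a_{n-k}f_{k}\geq0$, so $c_{n}=-f_{n}\leq0$ for all $n\geq1$ and the first part applies.

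The one genuine obstacle is the Agler--McCarthy characterization invoked at the outset, which is a substantial theorem in its own right; the positivity observation and Kaluza's induction are routine, which is why the statement is simply recalled from \cite{AM}.
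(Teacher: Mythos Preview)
The paper does not prove this theorem at all; it simply recalls it as Theorem~7.33 and Lemma~7.38 from \cite{AM}, with no argument given. Your proposal correctly identifies this and then goes further by sketching the standard proof (the Agler--McCarthy/McCullough--Quiggin characterization via positivity of $1-1/K$, the elementary fact that a power-series kernel $\sum d_n(\overline{w}z)^n$ is positive semi-definite iff all $d_n\geq 0$, and the inductive proof of Kaluza's Lemma). The sketch is correct and is exactly the route taken in \cite{AM}, so there is nothing to compare: you have supplied more than the paper does, and what you supplied is right.
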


\begin{theorem}
The space $S_{1}^{2}({\mathbb{D)}}$ has the complete Pick property.
\end{theorem}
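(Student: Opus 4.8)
The plan is to apply the second, more elementary criterion in Theorem \ref{completep} (Kaluza's lemma): if the kernel coefficients $a_{n}=1/\left\Vert z^{n}\right\Vert ^{2}$ satisfy $a_{0}=1$ and the log-convexity inequality $a_{n}^{2}\leq a_{n-1}a_{n+1}$ for all $n\geq1$, then the space has the complete Pick property. So I would first read off from the series form of the norm in (\ref{defs1}) that $\left\Vert z^{n}\right\Vert _{S_{1}^{2}}^{2}=\frac{(n+1)(n+2)}{2}$, hence
\[
a_{n}=\frac{2}{(n+1)(n+2)},\qquad n\geq0.
\]
In particular $a_{0}=2/(1\cdot2)=1$, so the normalization hypothesis of Theorem \ref{completep} is met.

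Next I would verify the log-convexity of $(a_{n})$. Substituting the explicit formula, the inequality $a_{n}^{2}\leq a_{n-1}a_{n+1}$ becomes
\[
\frac{4}{(n+1)^{2}(n+2)^{2}}\leq\frac{2}{n(n+1)}\cdot\frac{2}{(n+2)(n+3)},
\]
and after cancelling the common factor $\frac{4}{(n+1)(n+2)}$ this reduces to $\frac{1}{(n+1)(n+2)}\leq\frac{1}{n(n+3)}$, i.e. $n(n+3)\leq(n+1)(n+2)$, i.e. $n^{2}+3n\leq n^{2}+3n+2$. This holds (with room to spare) for every $n\geq1$, so by Theorem \ref{completep} the space $S_{1}^{2}({\mathbb{D}})$ has the complete Pick property.

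There is essentially no obstacle here; the only real decision is to invoke the log-convexity criterion rather than computing the Taylor coefficients of $1/K_{w}(z)$ directly. For completeness one could also run that second route, using the closed form of $K_{w}(z)$ from Lemma \ref{rkosotd} to expand $1/K_{w}(z)=\sum_{n\geq0}c_{n}t^{n}$ (with $t=\overline{w}z$) and check $c_{n}\leq0$ for $n\geq1$; but the second half of Theorem \ref{completep} makes this unnecessary, since log-convexity of $(a_{n})$ already forces $c_{n}\leq0$.
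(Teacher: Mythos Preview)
Your proof is correct and follows essentially the same route as the paper: both verify Kaluza's log-convexity criterion $a_{n}^{2}\leq a_{n-1}a_{n+1}$ for $a_{n}=\dfrac{2}{(n+1)(n+2)}$ and then invoke Theorem \ref{completep}. Your write-up is in fact more detailed (you explicitly check $a_{0}=1$ and spell out the algebra $n(n+3)\leq(n+1)(n+2)$), but the argument is the same.
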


\begin{proof}
The proof is by verifying that for all $n\geq1,$
\[
a_{n}=\frac{2^{2}}{\left( n+1\right) ^{2}(n+2)^{2}}<\frac{4}{n(n+1)(n+2)(n+3)}%
=a_{n-1}a_{n+1}.
\]

\end{proof}

We note that $S^{2}({\mathbb{D)}}$ or $S_{2}^{2}({\mathbb{D)}}$ does not has
the complete Pick property. This again follows from Theorem \ref{completep} by
verifying that
\begin{align*}
\frac{1}{1+{\sum_{n\geq1}}\left(  t^{n}/n^{2}\right)  } &  =1-t+\frac{3}%
{4}t^{2}+\cdots,\\
\frac{1}{{\sum_{n\geq0}}t^{n}/(1+n^{2})} &  =1-\frac{1}{2}t+\frac{1}{20}%
t^{2}+\cdots.
\end{align*}
In fact, $S^{2}({\mathbb{D)}}$ or $S_{2}^{2}({\mathbb{D)}}$ does not has the
scalar Pick property. The following example is adapted from Problem 3 of
Exercises 5.3 in \cite{EM}, where a similar situation occurs for another
definition of Dirichlet space different from $D^{2}({\mathbb{D}})$ as in
(\ref{defdiri}).

\begin{proposition}
The space $S^{2}({\mathbb{D)}}$ or $S_{2}^{2}({\mathbb{D)}}$ does not have the
scalar Pick property.
\end{proposition}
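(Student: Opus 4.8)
The plan is to exhibit a concrete finite set of interpolation nodes and target values for which the Pick matrix in \eqref{pick} is positive semi-definite, yet no multiplier of norm at most one performs the interpolation. Following the hint, I would adapt the example from Exercises 5.3 in \cite{EM}. Recall that the multiplier norm of any bounded multiplication operator $M_\phi$ dominates $\|\phi\|_\infty$ (the reproducing kernels are eigenvectors of $M_\phi^\ast$, as noted before Theorem \ref{norm}), so a multiplier of norm at most one is in particular an $H^\infty$-function of sup-norm at most one. The idea is to choose nodes $\lambda_1,\dots,\lambda_N$ and values $w_1,\dots,w_N$ so that the $H^\infty$ interpolation is forced (by the classical Schwarz–Pick/Nevanlinna–Pick theory on the disk) to be a specific Blaschke-type function $B$ with $\|B\|_\infty=1$, but $B$ fails to lie in the multiplier space of $S^2({\mathbb D})$ or $S_2^2({\mathbb D})$ with the required norm bound — for instance because the relevant quantity $\|B'\|_{H^2}^2$ is too large, or because $B$ is not even a finite Blaschke product and hence not a contractive multiplier.

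First I would fix a small symmetric configuration, e.g. three real nodes $\lambda_1 = -r$, $\lambda_2 = 0$, $\lambda_3 = r$ with $0<r<1$, and prescribe values $w_1 = -s$, $w_2 = 0$, $w_3 = s$ for a suitable $s\in(0,1)$ chosen in terms of $r$. The classical interpolation on $H^2$ is solvable with a contraction precisely when the Szegő Pick matrix $[(1-\overline{w_i}w_j)/(1-\overline{\lambda_i}\lambda_j)]$ is positive semi-definite; tuning $s$ to the boundary of solvability forces the unique solution to be a Blaschke product of low degree, which I can write down explicitly. Then I would compute, for the relevant space ($S^2$ with kernel $\sum n^{-2}(\overline w z)^n$ augmented by the constant term, or $S_2^2$ with kernel $\sum (1+n^2)^{-1}(\overline w z)^n$), the Pick matrix $[(1-\overline{w_i}w_j)K_{\lambda_i}(\lambda_j)]$ and check it is positive semi-definite for the same data — this is a $3\times3$ numerical check, feasible because the kernels are explicit power series (and for $S^2$, $K_w(z) = 1 + \frac{1}{\overline w z}\ln\frac{1}{1-\overline w z} - 1$ type closed forms are available). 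Finally I would verify that the forced $H^\infty$-interpolant $B$ has multiplier norm strictly greater than one on $S^2({\mathbb D})$ (resp. $S_2^2({\mathbb D})$), e.g. by testing $M_B$ against the constant function $1$ and using $\|M_B 1\| = \|B\|$ in the relevant norm, which exceeds $\|B\|_\infty = 1$ whenever $B$ is non-constant.

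The main obstacle will be the bookkeeping in the middle step: one must choose the data $r, s$ so that \emph{simultaneously} the $S^2$ (or $S_2^2$) Pick matrix is positive semi-definite (so the "necessary condition" holds) while the unique contractive $H^\infty$-solution genuinely fails the multiplier-norm bound. Since the $S^2$-norm and $S_2^2$-norm dominate the $H^2$-norm, the $S^2$/$S_2^2$ Pick matrix is "smaller" than a naive comparison would suggest, so positivity of the Pick matrix is not automatic and the parameters must be pushed to a regime where the forced interpolant is close to extremal for $H^\infty$ but noticeably non-extremal for the derivative-type norm. I expect the cleanest route is to take the $H^\infty$-data exactly at the edge of solvability (degenerate Szegő Pick matrix, rank $N-1$), so that $B$ is a Blaschke product of degree $N-1$ computed by elementary algebra, then exploit strict inequalities — of the type already used in the excerpt, $\sum \tfrac{n+1}{n^2(n+2)} > \sum \tfrac{1}{n(n+1)}$ — to show the derivative-type Pick matrix is \emph{strictly} positive definite for the same data while $\|M_B\| > 1$; continuity in the parameters then gives an open set of valid counterexamples and one picks a convenient rational point in it.
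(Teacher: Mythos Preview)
Your plan has a genuine gap, and it is not the bookkeeping you flag as the ``main obstacle'' --- it is the mechanism itself.

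First, the specific inequality you invoke at the end is false on $S^2(\mathbb D)$: you claim $\|M_B 1\|_{S^2}=\|B\|_{S^2}>\|B\|_\infty=1$ for every non-constant $B$, but for $B(z)=z$ one has $\|z\|_{S^2}^2=|0|^2+1^2\cdot 1=1$. And your symmetric three-point data $(-r,0,r)\mapsto(-s,0,s)$ at the Szeg\H{o}-degenerate value $s=r$ gives a Szeg\H{o} Pick matrix of rank one (every entry equals $1$), so the forced interpolant \emph{is} $B(z)=z$, not a degree-two Blaschke product. Testing on $1$ then yields nothing.

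Second, and more seriously, the hope that the $S^2$ (or $S_2^2$) Pick matrix is positive semi-definite at a Szeg\H{o}-degenerate point is unfounded. In the two-node version (nodes $0,z_0$, values $0,w_0$), Szeg\H{o}-degeneracy means $|w_0|=|z_0|$, and the $S^2$ Pick condition reads $(1-|z_0|^2)K_{z_0}(z_0)\ge 1$; but expanding $(1-t)(1+t+t^2/4+t^3/9+\cdots)=1-\tfrac{3}{4}t^2-\cdots$ with $t=|z_0|^2$ shows this is \emph{strictly} less than $1$ for every $z_0\ne 0$. So at the degenerate point the $S^2$ Pick condition fails, and your continuity argument cannot get started: perturbing $s$ downward to regain $S^2$-positivity simultaneously makes the Szeg\H{o} matrix strictly positive definite, destroying uniqueness of the $H^\infty$ interpolant and with it your only handle on non-existence.

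The paper's proof avoids this trap by \emph{not} forcing uniqueness. It takes just two nodes $0,z_0$ with values $0,w_0$ (so $\varphi_0=\varphi(0)=0$) and, instead of identifying a single candidate interpolant, tests the multiplier on the function $z$ rather than on $1$: from $\|M_\varphi\|\le 1$ one gets $\|\varphi z\|_{S^2}^2=\sum_{n\ge 1}(n+1)^2|\varphi_n|^2\le 1$, and then Cauchy--Schwarz gives the universal constraint $|w_0|^2=|\varphi(z_0)|^2\le \sum_{n\ge 1}|z_0|^{2n}/(n+1)^2$ that \emph{every} contractive-multiplier interpolant must satisfy. One then chooses $z_0=0.5$, $|w_0|^2=0.1$ and checks numerically that the $S^2$ Pick matrix is positive semi-definite while this universal bound is violated. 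The key idea you are missing is to replace ``force a unique interpolant and rule it out'' by ``derive a necessary inequality on all interpolants (by testing on $z$) and violate it''.
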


\begin{proof}
The reproducing kernel of $S^{2}({\mathbb{D)}}$ is
\[
K_{\lambda}(z)=1+{\sum_{n\geq1}}\frac{(\overline{\lambda}z)^{n}}{n^{2}}.
\]
If it has the scalar Pick property, then the necessary condition for the
existence of $\varphi(z)={\sum_{n\geq0}}\varphi_{n}z^{n}$ such that
\[
\varphi(0)=0,\varphi(z_{0})=w_{0}{\text{ and }}\left\Vert M_{\varphi
}\right\Vert \leq1.
\]
is the Pick matrix is positive semi-definite, i.e.,
\begin{equation}
\left[
\begin{array}
[c]{cc}%
1 & 1\\
1 & (1-\left\vert w_{0}\right\vert ^{2})K_{z_{0}}(z_{0})
\end{array}
\right]  \geq0.\label{one}%
\end{equation}
Equivalently
\[
(1-\left\vert w_{0}\right\vert ^{2})\left[  1+{\sum_{n\geq1}}\frac{\left\vert
z_{0}\right\vert ^{2n}}{n^{2}}\right]  \geq1.
\]
On the other hand, $\left\Vert M_{\varphi}\right\Vert \leq1$ implies that
$\left\Vert M_{\varphi}z\right\Vert \leq\left\Vert M_{\varphi}\right\Vert
\left\Vert z\right\Vert \leq1.$ That is,
\[
\left\Vert M_{\varphi}z\right\Vert ^{2}=\left\Vert {\sum_{n\geq0}}\varphi
_{n}z^{n+1}\right\Vert ^{2}={\sum_{n\geq1}}(n+1)^{2}\left\vert \varphi
_{n}\right\vert ^{2}\leq1,
\]
since $\varphi_{0}=\varphi(0)=0.$ Hence
\begin{align}
\left\vert w_{0}\right\vert ^{2} &  =\left\vert \varphi(z_{0})\right\vert
^{2}=\left\vert {\sum_{n\geq1}}\varphi_{n}z_{0}^{n}\right\vert ^{2}\leq\left(
{\sum_{n\geq1}}\left\vert \varphi_{n}\right\vert \left\vert z_{0}\right\vert
^{n}\right)  ^{2}\label{two}\\
&  =\left(  {\sum_{n\geq1}}(n+1)\left\vert \varphi_{n}\right\vert
\frac{\left\vert z_{0}\right\vert ^{n}}{n+1}\right)  ^{2}\leq{\sum_{n\geq1}%
}\frac{\left\vert z_{0}\right\vert ^{2n}}{(n+1)^{2}}.\nonumber
\end{align}
Let $z_{0}=0.5$ and $w_{0}^{2}=0.1,$ then (\ref{one}) holds since
\[
(1-0.1)\ast\left[  1+{\sum_{n=1}^{\infty}}\frac{\left(  0.5\right)  ^{2n}%
}{n^{2}}\right]  \approx0.9\ast1.2677\approx1.1409>1.
\]
But (\ref{two}) does not holds since
\[
{\sum_{n=1}^{\infty}}\frac{\left(  0.5\right)  ^{2n}}{(n+1)^{2}}%
\approx0.0706\ngeqslant0.1.
\]
The proof for $S_{2}^{2}({\mathbb{D)}}$ is similar.
\end{proof}

There are several applications of having the complete Pick property. We focus
on two applications related to the multiplication operators and composition
operators. The first is the following (scalar) Toeplitz-Corona theorem, which
is a straightforward corollary of the much more general Theorem 8.57
\cite{AM}. But there are hidden multiplication operators on vector-valued
$S_{1}^{2}({\mathbb{D)}}$ space in our statements.

\begin{theorem}
Let $\varphi_{1},\cdots,\varphi_{N}$ be in $S_{1}^{2}({\mathbb{D)}}$. The
following are equivalent.\newline(1) There exist functions $\psi_{1}%
,\cdots,\psi_{N}$ in $S_{1}^{2}({\mathbb{D)}}$ such that
\[
{\sum_{i=1}^{N}}\varphi_{i}\psi_{i}=1
\]
and
\[
{\sum_{i=1}^{N}}\left\Vert M_{\psi_{i}}h\right\Vert ^{2}\leq\frac{1}%
{\delta^{2}}\left\Vert h\right\Vert ^{2}{\text{ for }}h\in S_{1}%
^{2}({\mathbb{D)}}{\text{.}}%
\]
\newline(2) The multipliers $M_{\varphi_{i}}$ on $S_{1}^{2}({\mathbb{D)}}$
satisfy the inequality
\[
{\sum_{i=1}^{N}}M_{\varphi_{i}}^{\ast}M_{\varphi_{i}}\geq\delta^{2}I.
\]
\newline(3) The function
\[
\left[  {\sum_{i=1}^{N}}\overline{\varphi_{i}(w)}\varphi_{i}(z)-\delta
^{2}\right]  K_{w}(z)
\]
is positive semi-definite on ${\mathbb{D}}{\mathbb{\times}}{\mathbb{D}}$.
\end{theorem}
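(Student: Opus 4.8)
The plan is to establish the cycle $(1)\Rightarrow(2)\Rightarrow(3)\Rightarrow(1)$; only the last implication is substantive, and it is the only one that invokes the complete Pick property of $S_{1}^{2}({\mathbb{D}})$ proved in the preceding theorem. A preliminary remark simplifies matters: by Theorem~\ref{norm} every element of $S_{1}^{2}({\mathbb{D}})$ is a multiplier, so all operators $M_{\varphi_{i}},M_{\psi_{i}}$ below are bounded and no domain issues arise.

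For $(1)\Rightarrow(2)$ I would argue formally. From $\sum_{i}\varphi_{i}\psi_{i}=1$ and associativity of pointwise multiplication one gets $\sum_{i}M_{\varphi_{i}}M_{\psi_{i}}=I$, hence $\sum_{i}M_{\psi_{i}}^{\ast}M_{\varphi_{i}}^{\ast}=I$ after taking adjoints. Pairing this identity with a vector $h$ and applying Cauchy--Schwarz twice,
\[
\|h\|^{2}=\sum_{i}\langle M_{\varphi_{i}}^{\ast}h,\,M_{\psi_{i}}h\rangle\le\Bigl(\sum_{i}\|M_{\varphi_{i}}^{\ast}h\|^{2}\Bigr)^{1/2}\Bigl(\sum_{i}\|M_{\psi_{i}}h\|^{2}\Bigr)^{1/2}\le\frac{1}{\delta}\|h\|\Bigl(\sum_{i}\|M_{\varphi_{i}}^{\ast}h\|^{2}\Bigr)^{1/2},
\]
where the last inequality uses the hypothesis in (1); squaring and cancelling $\|h\|$ yields the operator positivity asserted in (2).

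For $(2)\Leftrightarrow(3)$ I would use the standard dictionary between operator positivity and kernel positivity in a reproducing kernel Hilbert space: test the operator appearing in (2) on a finite linear combination $f=\sum_{j}c_{j}K_{w_{j}}$ of reproducing kernels, use that each $K_{w}$ is an eigenvector of $M_{\varphi_{i}}^{\ast}$ with eigenvalue $\overline{\varphi_{i}(w)}$ (as recalled just before Theorem~\ref{norm}), and observe that the resulting quadratic form collapses to $\sum_{j,k}c_{j}\overline{c_{k}}\bigl[\sum_{i}\overline{\varphi_{i}(w_{j})}\varphi_{i}(w_{k})-\delta^{2}\bigr]K_{w_{j}}(w_{k})$. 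Since finite combinations of kernels are dense in $S_{1}^{2}({\mathbb{D}})$, nonnegativity of this expression for all $\{w_{j}\}$ and $\{c_{j}\}$ is equivalent both to (2) and to the positive semidefiniteness of the kernel in (3), and the computation is reversible.

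For $(3)\Rightarrow(1)$, the crux and the only step needing the complete Pick property, set $\Phi=(\varphi_{1},\dots,\varphi_{N})$, a $1\times N$ matrix of multipliers; condition (3) asserts exactly that $\bigl[\Phi(z)\Phi(w)^{\ast}-\delta^{2}\bigr]K_{w}(z)\succeq0$, and by the preceding equivalence this is the operator inequality in (2). Since $S_{1}^{2}({\mathbb{D}})$ has the complete Pick property, the general Toeplitz--Corona theorem for complete Pick spaces, Theorem~8.57 of \cite{AM} --- whose proof is exactly where the ``hidden'' multiplication operators on vector-valued $S_{1}^{2}({\mathbb{D}})$ appear --- applies and yields an $N\times1$ column $\Theta=(\theta_{1},\dots,\theta_{N})$ of multipliers with $\|M_{\Theta}\|\le1$ and $\Phi\Theta=\delta$. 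Putting $\psi_{i}=\theta_{i}/\delta$ then gives $\sum_{i}\varphi_{i}\psi_{i}=1$, while $\|M_{\Theta}\|\le1$ translates, through $\|M_{\Theta}\|^{2}=\bigl\|\sum_{i}M_{\theta_{i}}^{\ast}M_{\theta_{i}}\bigr\|$, into $\sum_{i}\|M_{\psi_{i}}h\|^{2}\le\delta^{-2}\|h\|^{2}$, which is (1). I expect the only real obstacle to be bookkeeping: fitting the scalar statement here to the matrix hypotheses of Theorem~8.57 of \cite{AM} and tracking $\delta$ through to the bound $1/\delta$ in (1); once the complete Pick property is available there is no further analytic content to supply.
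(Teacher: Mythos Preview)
Your approach matches the paper's: the paper does not give a proof but simply declares the theorem ``a straightforward corollary of the much more general Theorem 8.57 \cite{AM},'' and your argument likewise reduces the substantive implication $(3)\Rightarrow(1)$ to that same citation, with the remaining implications handled by elementary reproducing-kernel computations.

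One small point of care: your argument for $(2)\Leftrightarrow(3)$ via testing on finite kernel combinations uses the eigenvector relation $M_{\varphi_i}^{\ast}K_w=\overline{\varphi_i(w)}K_w$, which means the quadratic form that collapses to the Pick-type kernel in (3) is $\langle(\sum_i M_{\varphi_i}M_{\varphi_i}^{\ast}-\delta^2 I)f,f\rangle$, not $\langle(\sum_i M_{\varphi_i}^{\ast}M_{\varphi_i}-\delta^2 I)f,f\rangle$; similarly your $(1)\Rightarrow(2)$ computation yields $\sum_i\|M_{\varphi_i}^{\ast}h\|^2\ge\delta^2\|h\|^2$. This is the standard form of the Toeplitz--Corona hypothesis (and is what Theorem 8.57 of \cite{AM} actually uses), so your proof is correct for that version; the order $M_{\varphi_i}^{\ast}M_{\varphi_i}$ printed in the statement appears to be a slip.
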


The following result follows from Theorem \ref{norm}.

\begin{corollary}
Let $\varphi_{1},\cdots,\varphi_{N}$ be in $S_{1}^{2}({\mathbb{D)}}$. If
\begin{equation}
\left[  {\sum_{i=1}^{N}}\overline{\varphi_{i}(w)}\varphi_{i}(z)-\delta
^{2}\right]  K_{w}(z)\label{ckernel}%
\end{equation}
is positive semi-definite, then there exist functions $\psi_{1},\cdots
,\psi_{N}$ in $S_{1}^{2}({\mathbb{D)}}$ such that
\begin{equation}
{\sum_{i=1}^{N}}\varphi_{i}\psi_{i}=1{\text{ and }\sum_{i=1}^{N}}\left\Vert
\psi_{i}\right\Vert ^{2}\leq\frac{1}{\delta^{2}}.\label{corona}%
\end{equation}
Conversely, if (\ref{corona}) holds, then the function as in (\ref{ckernel})
with $\delta$ replaced by $\delta/2\sqrt{2}$ is positive semi-definite.
\end{corollary}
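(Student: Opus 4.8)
The plan is to read off both implications from the Toeplitz--Corona theorem stated just above, combined with the two-sided norm bound of Theorem \ref{norm}. The only genuine content is a careful choice of test vector and careful bookkeeping of constants.

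For the forward implication, I would observe that positive semi-definiteness of the function in (\ref{ckernel}) is precisely condition (3) of the Toeplitz--Corona theorem, so condition (1) holds: there exist $\psi_{1},\dots,\psi_{N}\in S_{1}^{2}({\mathbb{D}})$ with ${\sum_{i=1}^{N}}\varphi_{i}\psi_{i}=1$ and ${\sum_{i=1}^{N}}\left\Vert M_{\psi_{i}}h\right\Vert ^{2}\leq\delta^{-2}\left\Vert h\right\Vert ^{2}$ for every $h\in S_{1}^{2}({\mathbb{D}})$. The key step is then simply to evaluate this last inequality at the constant function $h=1$: since $\left\Vert 1\right\Vert _{S_{1}^{2}}^{2}=\tfrac{(0+1)(0+2)}{2}=1$ by (\ref{defs1}) and $M_{\psi_{i}}1=\psi_{i}$, we obtain ${\sum_{i=1}^{N}}\left\Vert \psi_{i}\right\Vert ^{2}\leq\delta^{-2}$, which is exactly (\ref{corona}).

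For the converse, I would assume (\ref{corona}) and verify condition (1) of the Toeplitz--Corona theorem with $\delta$ replaced by $\delta/2\sqrt{2}$; the equivalence (1)$\Leftrightarrow$(3) then yields the asserted positive semi-definiteness. The same functions $\psi_{i}$ work: the relation ${\sum_{i=1}^{N}}\varphi_{i}\psi_{i}=1$ is untouched, and by the upper bound $\left\Vert M_{\psi_{i}}\right\Vert \leq2\sqrt{2}\left\Vert \psi_{i}\right\Vert $ from Theorem \ref{norm}, for every $h\in S_{1}^{2}({\mathbb{D}})$,
\[
{\sum_{i=1}^{N}}\left\Vert M_{\psi_{i}}h\right\Vert ^{2}\leq\left({\sum_{i=1}^{N}}\left\Vert M_{\psi_{i}}\right\Vert ^{2}\right)\left\Vert h\right\Vert ^{2}\leq8\left({\sum_{i=1}^{N}}\left\Vert \psi_{i}\right\Vert ^{2}\right)\left\Vert h\right\Vert ^{2}\leq\frac{8}{\delta^{2}}\left\Vert h\right\Vert ^{2}=\frac{1}{(\delta/2\sqrt{2})^{2}}\left\Vert h\right\Vert ^{2},
\]
which is precisely condition (1) for the constant $\delta/2\sqrt{2}$.

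Since each step invokes a result already available, I do not expect a real obstacle; the point requiring attention is the constant bookkeeping --- recording that $\left\Vert 1\right\Vert _{S_{1}^{2}}=1$ so the forward direction produces the clean bound $\delta^{-2}$, and tracking the factor $2\sqrt{2}$ (hence $8$) that is lost in passing from the multiplier norm to the $S_{1}^{2}$-norm in the converse, which is exactly why $\delta$ must be replaced by $\delta/2\sqrt{2}$ there.
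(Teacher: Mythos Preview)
Your proposal is correct and follows essentially the same approach as the paper's own proof: both directions are read off from the Toeplitz--Corona theorem, the forward direction by testing at $h=1$ (using $\|1\|_{S_1^2}=1$) and the converse by invoking the upper bound $\|M_{\psi_i}\|\le 2\sqrt{2}\|\psi_i\|$ from Theorem~\ref{norm} to pass back to condition~(1) with the constant $\delta/2\sqrt{2}$.
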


\begin{proof}
If (\ref{ckernel}) is positive semi-definite, then by Toeplitz-Corona
Theorem,
\[
{\sum_{i=1}^{N}}\left\Vert \psi_{i}\right\Vert ^{2}={\sum_{i=1}^{N}}\left\Vert
M_{\psi_{i}}1\right\Vert ^{2}\leq\frac{1}{\delta^{2}}.
\]
Conversely, if (\ref{corona}) holds, then by Theorem \ref{norm}, for $h\in
S_{1}^{2}({\mathbb{D)}}$,
\[
{\sum_{i=1}^{N}}\left\Vert M_{\psi_{i}}h\right\Vert ^{2}={\sum_{i=1}^{N}%
}\left\Vert \psi_{i}h\right\Vert ^{2}\leq{\sum_{i=1}^{N}}8\left\Vert \psi
_{i}\right\Vert ^{2}\left\Vert h\right\Vert ^{2}\leq\frac{8}{\delta^{2}%
}\left\Vert h\right\Vert ^{2}.
\]
Again by Toeplitz-Corona Theorem, the function as in (\ref{ckernel}) with
$\delta$ replaced by $\delta/2\sqrt{2}$ is positive semi-definite.
\end{proof}

The second application of the complete Pick kernel of $S_{1}^{2}({\mathbb{D)}%
}$ is inspired by Theorem 1 in \cite{Jury}, which derives an upper bound for
the norm of a weighted composition operator and provides a new insight into a
known upper bound for the norm of a composition operator on the Hardy space
and weighted Bergman spaces, which says on $H^{2}({\mathbb{D)}}$, for a
self-map $\varphi$ of the disk,
\begin{equation}
\left\Vert C_{\varphi}\right\Vert ^{2}\leq\frac{1+\left\vert \varphi
(0)\right\vert }{1-\left\vert \varphi(0)\right\vert }.\label{upperb}%
\end{equation}
This upper bound is usually proved by Littlewood subordination principle. This
upper bound also says on $H^{2}({\mathbb{D)}}$, $C_{\varphi}$ is automatically
bounded. In general $C_{\varphi}$ is not necessarily bounded on $S_{1}%
^{2}({\mathbb{D)}}$ for a self-map $\varphi$ of the disk, the boundedness
condition is studied in \cite{Barbara} \cite{CH}. One may ask, if $C_{\varphi
}$ is bounded, does a similar inequality holds? The following example shows
this is impossible.

\begin{example}
\label{zknorm}Note that $C_{z^{k}}$ is a diagonal operator, so we can compute
its norm as follows:
\[
\left\Vert C_{z^{k}}f\right\Vert ^{2}=\left\Vert {\sum_{n\geq0}}f_{n}%
z^{kn}\right\Vert ^{2}={\sum_{n\geq0}}\frac{(kn+1)(kn+2)}{2}\left\vert
f_{n}\right\vert ^{2}.
\]
Hence
\[
\left\Vert C_{z^{k}}\right\Vert ^{2}=\sup_{n\geq0}\frac{(kn+1)(kn+2)}{2}%
\div\frac{(n+1)(n+2)}{2}=k^{2}.
\]
That is $\left\Vert C_{z^{k}}\right\Vert =k,$ and $\left\Vert C_{\varphi
}\right\Vert \leq C\frac{1+\left\vert \varphi(0)\right\vert }{1-\left\vert
\varphi(0)\right\vert }$ can not hold for any constant $C.$
\end{example}

Since the multipliers of $H^{2}({\mathbb{D)}}$ is $H^{\infty}({\mathbb{D)}}$
and $\varphi$ is a self-map of the disk if and only if $\left\Vert M_{\varphi
}\right\Vert \leq1$ on $H^{2}({\mathbb{D)}}$, if we require $\left\Vert
M_{\varphi}\right\Vert \leq1$ on $S_{1}^{2}({\mathbb{D)}}$, then we do get a
similar upper bound as in (\ref{upperb}) for $\left\Vert C_{\varphi
}\right\Vert .$ In fact our proof is valid for a more general reproducing
kernel, which even yields a new upper bound for the norm of a composition
operator on the Dirichlet space. We start the proof similarly as in the proof
of Theorem 1 \cite{Jury} with appropriate modifications, then quickly run into
difficulty because the particular form of Szeg\H{o} kernel makes the Pick
kernel has a nice form $(1-\overline{\varphi(w)}\varphi(z))/(1-\overline
{w}z),$ so a couple of new ideas are needed.

Let $H_{K}$ be a holomorphic Hilbert space on ${\mathbb{D}}$ with reproducing kernel $K_{w}(z).$ Let $\varphi\in H_{K}$ be such that $\left\Vert
M_{\varphi}\right\Vert \leq1$ on $H_{K}$, which is equivalent to the kernel
\[
K_{w}^{\varphi}(z)=(1-\overline{\varphi(w)}\varphi(z))K_{w}(z)\geq0.
\]
The assumption $\left\Vert M_{\varphi}\right\Vert \leq1$ implies that
$\varphi$ is a self-map of the disk. Let $H_{K}(\varphi)$ be the (generalized)
de Branges-Rovnyak space with reproducing kernel $K_{w}^{\varphi}(z).$ For any
$f\in H({\mathbb{D}}),$ there is a densely defined operator $M_{f}^{\ast},$
defined on the kernel by $M_{f}^{\ast}K_{w}(z)=\overline{f(w)}K_{w}(z)$ and
extended linearly. Similarly $C_{\varphi}^{\ast}$ is densely defined on the
kernel by $C_{\varphi}^{\ast}K_{w}(z)=K_{\varphi(w)}(z).$ Here $M_{f}^{\ast}$
and $C_{\varphi}^{\ast}$ are only formal adjoints of $M_{f}$ and $C_{\varphi
},$ and when $M_{f}^{\ast}$ and $C_{\varphi}^{\ast}$ are bounded, they become
the actual adjoints.

\begin{theorem}
Assume $K_{w}(z)$ is of the form
\[
K_{w}(z)={\sum_{n\geq0}}a_{n}(\overline{w}z)^{n}{\text{ where }}%
a_{n}=1/\left\Vert z^{n}\right\Vert ^{2}\leq1{\text{ for all }}n\geq0,
\]
and $\varphi \in H_{K}$ satisfies $\left\Vert M_{\varphi}\right\Vert \leq1$ on $H_{K}$. Then for any $f\in H_{K}(\varphi),$ the operator $C_{\varphi}^{\ast}M_{f}^{\ast}$
is bounded on $H_{K}$ and $\left\Vert C_{\varphi}^{\ast}M_{f}^{\ast
}\right\Vert \leq\left\Vert f\right\Vert _{H_{K}(\varphi)}.$
\end{theorem}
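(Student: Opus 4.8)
The strategy is to convert the operator-norm bound into a single reproducing-kernel positivity statement and then prove that statement by a short chain of Schur-product inequalities. We may assume $f\neq 0$, since otherwise the statement is trivial, and after rescaling that $\|f\|_{H_{K}(\varphi)}=1$; also $\|M_{\varphi}\|\le 1$ forces $\varphi$ to be a self-map of $\mathbb{D}$ (as noted above), so $|\varphi(w)\varphi(z)|<1$ for $z,w\in\mathbb{D}$. Since the linear span of $\{K_{w}:w\in\mathbb{D}\}$ is dense in $H_{K}$ and $C_{\varphi}^{\ast}M_{f}^{\ast}K_{w}=\overline{f(w)}\,K_{\varphi(w)}$, the bound $\|C_{\varphi}^{\ast}M_{f}^{\ast}\|\le 1$ is equivalent to
\[
\left\Vert {\sum_{i}}c_{i}\,\overline{f(w_{i})}\,K_{\varphi(w_{i})}\right\Vert ^{2}\le\left\Vert {\sum_{i}}c_{i}\,K_{w_{i}}\right\Vert ^{2}
\]
for all finite families $w_{i}\in\mathbb{D}$, $c_{i}\in\mathbb{C}$. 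Expanding both norms via $K_{a}(b)=\langle K_{a},K_{b}\rangle$, this is exactly the statement that the Hermitian kernel
\[
R(z,w):=K_{w}(z)-f(z)\overline{f(w)}\,K_{\varphi(w)}(\varphi(z))
\]
is positive semidefinite on $\mathbb{D}\times\mathbb{D}$; this reduction is routine conjugation bookkeeping once one checks that $R$ is Hermitian (each $a_{n}$ being real). It remains to prove $R\succeq 0$.

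I would then unpack the hypotheses as kernel positivity. The assumption $\|M_{\varphi}\|\le 1$ on $H_{K}$ means precisely $K_{w}^{\varphi}(z)=(1-\overline{\varphi(w)}\varphi(z))K_{w}(z)\succeq 0$, so that $H_{K}(\varphi)$ is a genuine reproducing kernel Hilbert space; and $\|f\|_{H_{K}(\varphi)}=1$ implies $K_{w}^{\varphi}(z)-f(z)\overline{f(w)}\succeq 0$ (the standard sum-of-kernels fact). Since $\varphi$ maps $\mathbb{D}$ into $\mathbb{D}$, the Szeg\H{o} kernel pulled back along $\varphi$,
\[
s(z,w):=\frac{1}{1-\overline{\varphi(w)}\varphi(z)}={\sum_{n\ge 0}}\big(\overline{\varphi(w)}\varphi(z)\big)^{n},
\]
is itself a positive kernel. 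Schur-multiplying $K_{w}^{\varphi}(z)-f(z)\overline{f(w)}\succeq 0$ by $s(z,w)$ and using $s(z,w)K_{w}^{\varphi}(z)=K_{w}(z)$ gives
\[
K_{w}(z)-f(z)\overline{f(w)}\,s(z,w)\succeq 0.
\]

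The final step is where the hypothesis $a_{n}\le 1$ enters, and it is the crux of the argument. From $K_{\varphi(w)}(\varphi(z))={\sum_{n\ge 0}}a_{n}\big(\overline{\varphi(w)}\varphi(z)\big)^{n}$ we obtain
\[
s(z,w)-K_{\varphi(w)}(\varphi(z))={\sum_{n\ge 0}}(1-a_{n})\big(\overline{\varphi(w)}\varphi(z)\big)^{n}\succeq 0,
\]
being a convergent sum of nonnegative multiples of the positive kernels $\big(\overline{\varphi(w)}\varphi(z)\big)^{n}=\overline{\varphi(w)^{n}}\,\varphi(z)^{n}$. Schur-multiplying by the positive rank-one kernel $f(z)\overline{f(w)}$ yields $f(z)\overline{f(w)}\,s(z,w)\succeq f(z)\overline{f(w)}\,K_{\varphi(w)}(\varphi(z))$, and combining this with the inequality from the previous paragraph gives $R(z,w)\succeq 0$. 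Throughout one uses the Schur product theorem and the stability of positivity under pointwise limits (to handle the infinite sums). Compared with Jury's Hardy-space argument, where the Pick kernel collapses to $(1-\overline{\varphi(w)}\varphi(z))/(1-\overline{w}z)$, the genuinely new ingredient — and the step I expect to be the main obstacle to find — is the introduction of the pulled-back Szeg\H{o} kernel $s(z,w)$ as a Schur multiplier together with the comparison $s(z,w)\succeq K_{\varphi(w)}(\varphi(z))$; this comparison is precisely where $a_{n}\le 1$ is needed, which explains the hypothesis.
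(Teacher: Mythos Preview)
Your proof is correct and follows essentially the same route as the paper's: both reduce to the positivity of $K_{w}(z)-f(z)\overline{f(w)}\,K_{\varphi(w)}(\varphi(z))$ and split it as $\bigl[K_{w}(z)-f(z)\overline{f(w)}\,s(z,w)\bigr]+f(z)\overline{f(w)}\bigl[s(z,w)-K_{\varphi(w)}(\varphi(z))\bigr]$, with the first bracket handled via $\|f\|_{H_{K}(\varphi)}=1$ (the paper phrases this as an orthonormal-basis extension, you as a Schur product with $s$) and the second via $a_{n}\le 1$. The only cosmetic difference is your systematic use of Schur-product language where the paper writes out the sums of rank-one kernels explicitly.
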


\begin{proof}
By rescaling, we can assume $\left\Vert f\right\Vert _{H_{K}(\varphi)}=1.$ We
need to prove $\left\Vert C_{\varphi}^{\ast}M_{f}^{\ast}\right\Vert \leq1.$
Put $f_{0}=f$ and chose unit vectors $f_{1},f_{2},\cdots$ such that $\left\{
f_{n},n\geq0\right\}  $ is an orthonormal basis of $H_{K}(\varphi).$ Then
\[
K_{w}^{\varphi}(z)=(1-\overline{\varphi(w)}\varphi(z))K_{w}(z)={\sum_{n\geq0}%
}\overline{f_{n}(w)}f_{n}(z),
\]
and the kernel $K_{1}$
\[
K_{1}:=K_{w}(z)-\frac{\overline{f(w)}f(z)}{1-\overline{\varphi(w)}\varphi
(z)}={\sum_{n\geq1}}\frac{\overline{f_{n}(w)}f_{n}(z)}{1-\overline{\varphi
(w)}\varphi(z)}%
\]
is therefore positive semi-definite being the sum of positive definite
semi-definite kernels.

We need to prove $\left\Vert C_{\varphi}^{\ast}M_{f}^{\ast}\right\Vert \leq1.$
For any $N$ distinct points $w_{1},\cdots,w_{N}$ in ${\mathbb{D}}$ and complex
numbers $c_{1},\cdots,c_{N},$ let
\[
h={\sum_{i=1}^{N}}c_{i}K_{w_{i}}(z),
\]
Then $\left\langle C_{\varphi}^{\ast}M_{f}^{\ast}h,C_{\varphi}^{\ast}%
M_{f}^{\ast}h\right\rangle \leq\left\langle h,h\right\rangle $ is the same as
\begin{align*}
&  \left\langle h,h\right\rangle -\left\langle C_{\varphi}^{\ast}M_{f}^{\ast
}h,C_{\varphi}^{\ast}M_{f}^{\ast}h\right\rangle \\
&  ={\sum_{i=1}^{N}}c_{i}\overline{c_{j}}K_{w_{i}}(w_{j})-{\sum_{i=1}^{N}%
}c_{i}\overline{c_{j}}\overline{f(w_{i})}f(w_{j})K_{\varphi(w_{i})}%
(\varphi(w_{j}))\geq0.
\end{align*}
In other word, we need to prove the kernel $K_{2}$
\[
K_{2}:=K_{w}(z)-\overline{f(w)}f(z)K_{\varphi(w)}(\varphi(z))
\]
is positive semi-definite. Note that
\begin{align*}
K_{2}-K_{1} &  =\frac{\overline{f(w)}f(z)}{1-\overline{\varphi(w)}\varphi
(z)}-\overline{f(w)}f(z)K_{\varphi(w)}(\varphi(z))\\
&  =\overline{f(w)}f(z)\left[  \frac{1}{1-\overline{\varphi(w)}\varphi
(z)}-K_{\varphi(w)}(\varphi(z))\right] \\
&  =\overline{f(w)}f(z)\left[  {\sum_{n\geq0}}\left[  \overline{\varphi
(w)}\varphi(z)\right]  ^{n}-{\sum_{n\geq0}}a_{n}\left[  \overline{\varphi
(w)}\varphi(z)\right]  ^{n}\right] \\
&  ={\sum_{n\geq0}}(1-a_{n})\overline{f(w)}f(z)\left[  \overline{\varphi
(w)}\varphi(z)\right]  ^{n}%
\end{align*}
is positive semi-definite being the sum of positive semi-definite kernels.
\end{proof}

\begin{corollary}
\label{normestimate}Let $K_{w}(z)$ be the same as in previous theorem. If
$\left\Vert M_{\varphi}\right\Vert \leq1$ on $H_{K},$ then the operator
$C_{\varphi}$ is bounded on $H_{K}$ and
\[
\left\Vert C_{\varphi}\right\Vert ^{2}\leq\frac{1+\left\vert \varphi
(0)\right\vert }{1-\left\vert \varphi(0)\right\vert }.
\]

\end{corollary}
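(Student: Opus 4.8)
The plan is to apply the preceding theorem with $f$ taken to be the reproducing kernel $K^{\varphi}_{0}$ of the (generalized) de Branges--Rovnyak space $H_{K}(\varphi)$ at the origin, and then to divide out by the multiplication operator $M_{K^{\varphi}_{0}}$, which turns out to be invertible.

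First I would record the two elementary facts about $K^{\varphi}_{0}$ that make this work. Normalizing so that $\|1\|_{H_{K}}=1$ (i.e.\ $a_{0}=1$, as holds for all the spaces considered above) we have $K_{0}(z)\equiv 1$, hence $K^{\varphi}_{0}(z)=1-\overline{\varphi(0)}\varphi(z)$, and its norm in $H_{K}(\varphi)$ satisfies $\|K^{\varphi}_{0}\|_{H_{K}(\varphi)}^{2}=K^{\varphi}_{0}(0)=1-|\varphi(0)|^{2}$. Applying the preceding theorem to $f=K^{\varphi}_{0}$ therefore gives
\[
\bigl\|C_{\varphi}^{\ast}M_{K^{\varphi}_{0}}^{\ast}\bigr\|\leq\sqrt{1-|\varphi(0)|^{2}}.
\]
Next, since $M_{K^{\varphi}_{0}}=I-\overline{\varphi(0)}M_{\varphi}$, $\|M_{\varphi}\|\leq 1$, and $|\varphi(0)|<1$ (because $\varphi$ is a self-map of $\mathbb{D}$), the Neumann series shows $M_{K^{\varphi}_{0}}$ is invertible with $\|M_{K^{\varphi}_{0}}^{-1}\|\leq(1-|\varphi(0)|)^{-1}$, and likewise for its adjoint.

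The remaining point is to combine these to bound $C_{\varphi}^{\ast}$ itself. On the dense span of the kernels $\{K_{w}:w\in\mathbb{D}\}$ the formal adjoint acts diagonally, $M_{K^{\varphi}_{0}}^{\ast}K_{w}=(1-\varphi(0)\overline{\varphi(w)})K_{w}$, with every eigenvalue nonzero since $|\varphi(0)\varphi(w)|<1$ for $w\in\mathbb{D}$; hence $M_{K^{\varphi}_{0}}^{\ast}$ maps this dense subspace bijectively onto itself, and its inverse there is the restriction of the bounded operator $(M_{K^{\varphi}_{0}}^{-1})^{\ast}$. Thus for $h$ in the span of kernels, $C_{\varphi}^{\ast}h=\bigl(C_{\varphi}^{\ast}M_{K^{\varphi}_{0}}^{\ast}\bigr)\bigl((M_{K^{\varphi}_{0}}^{-1})^{\ast}h\bigr)$, so
\[
\|C_{\varphi}^{\ast}h\|\leq\sqrt{1-|\varphi(0)|^{2}}\,\frac{1}{1-|\varphi(0)|}\,\|h\|=\sqrt{\frac{1+|\varphi(0)|}{1-|\varphi(0)|}}\,\|h\|.
\]
Hence $C_{\varphi}^{\ast}$ extends to a bounded operator, $C_{\varphi}$ is bounded, and $\|C_{\varphi}\|^{2}=\|C_{\varphi}^{\ast}\|^{2}\leq(1+|\varphi(0)|)/(1-|\varphi(0)|)$.

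The part needing the most care is the last factorization: the preceding theorem only produces $C_{\varphi}^{\ast}M_{K^{\varphi}_{0}}^{\ast}$ as a bounded operator, whereas $C_{\varphi}^{\ast}$ is a priori only densely defined on the span of kernels, so one must verify that $M_{K^{\varphi}_{0}}^{\ast}$ genuinely preserves and is invertible on that dense subspace before cancelling it, and then pass back to the actual adjoint to conclude boundedness of $C_{\varphi}$. By contrast, checking $|\varphi(0)|<1$ and the convergence of the Neumann series for $M_{K^{\varphi}_{0}}^{-1}$ is routine.
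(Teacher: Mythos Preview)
Your proof is correct and follows essentially the same route as the paper: choose $f=K_{0}^{\varphi}=1-\overline{\varphi(0)}\varphi$, use $\|f\|_{H_{K}(\varphi)}^{2}=1-|\varphi(0)|^{2}$ from the preceding theorem, invert $M_{f}=I-\overline{\varphi(0)}M_{\varphi}$ via the Neumann series to get $\|M_{f}^{-1}\|\leq(1-|\varphi(0)|)^{-1}$, and combine. The paper writes this more tersely as $\|C_{\varphi}^{\ast}\|=\|C_{\varphi}^{\ast}M_{f}^{\ast}M_{1/f}^{\ast}\|\leq\|C_{\varphi}^{\ast}M_{f}^{\ast}\|\,\|M_{1/f}^{\ast}\|$; your version is a bit more scrupulous about the domain issue for the a~priori only densely defined $C_{\varphi}^{\ast}$, which is a welcome clarification rather than a different argument.
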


\begin{proof}
Let
\[
f(z)=1-\overline{\varphi(0)}\varphi(z).
\]
Then $f(z)=K_{0}^{\varphi}(z)$ and $\left\Vert f\right\Vert _{H_{K}(\varphi
)}^{2}=\left\langle K_{0}^{\varphi}(z),K_{0}^{\varphi}(z)\right\rangle
=1-\left\vert \varphi(0)\right\vert ^{2}.$ Note also
\[
\left\Vert M_{f}\right\Vert =\left\Vert I-\overline{\varphi(0)}M_{\varphi
}\right\Vert \geq1-\left\vert \varphi(0)\right\vert \left\Vert M_{\varphi
}\right\Vert \geq1-\left\vert \varphi(0)\right\vert {\text{ and }}\left\Vert
M_{1/f}\right\Vert \leq1/(1-\left\vert \varphi(0)\right\vert ).
\]
Hence
\begin{align*}
\left\Vert C_{\varphi}^{\ast}\right\Vert ^{2} & =\left\Vert C_{\varphi}^{\ast
}M_{f}^{\ast}M_{1/f}^{\ast}\right\Vert ^{2}\leq\left\Vert C_{\varphi}^{\ast
}M_{f}^{\ast}\right\Vert ^{2}\left\Vert M_{1/f}^{\ast}\right\Vert ^{2}\\
& \leq\left( 1-\left\vert \varphi(0)\right\vert ^{2}\right) /(1-\left\vert
\varphi(0)\right\vert )^{2}=\left( 1+\left\vert \varphi(0)\right\vert \right)
/(1-\left\vert \varphi(0)\right\vert ).
\end{align*}
The proof is complete.
\end{proof}

\begin{corollary}
Let $\varphi\in D^{2}({\mathbb{D)}}$ be such that $\left\Vert M_{\varphi
}\right\Vert \leq1.$ Then $C_{\varphi}$ bounded on $D^{2}({\mathbb{D)}}$, and
\[
\frac{1}{\left\vert \varphi(0)\right\vert ^{2}}\ln\frac{1}{1-\left\vert
\varphi(0)\right\vert ^{2}}\leq\left\Vert C_{\varphi}\right\Vert ^{2}\leq
\frac{1+\left\vert \varphi(0)\right\vert }{1-\left\vert \varphi(0)\right\vert
}%
\]
where $\varphi(0)\neq0,$ and $\left\Vert C_{\varphi}\right\Vert =1$ when
$\varphi(0)=0.$
\end{corollary}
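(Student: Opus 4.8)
The plan is to read this off the two preceding results, since the Dirichlet space $D^{2}({\mathbb{D)}}$ fits their framework. First I would check that $D^{2}({\mathbb{D)}}$ satisfies the hypotheses of the theorem feeding Corollary \ref{normestimate}: one has $\left\Vert z^{n}\right\Vert _{D^{2}}^{2}=n+1$, so its reproducing kernel is $K_{w}(z)={\sum_{n\geq0}}a_{n}(\overline{w}z)^{n}$ with $a_{n}=1/(n+1)$, hence $a_{0}=1$ and $a_{n}\leq1$ for all $n\geq0$; and this is precisely the kernel $\frac{1}{\overline{w}z}\ln\frac{1}{1-\overline{w}z}$ listed in (\ref{kernel}). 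The hypothesis $\left\Vert M_{\varphi}\right\Vert \leq1$ is assumed, so Corollary \ref{normestimate} applies directly and yields both that $C_{\varphi}$ is bounded on $D^{2}({\mathbb{D)}}$ and the upper bound $\left\Vert C_{\varphi}\right\Vert ^{2}\leq(1+\left\vert \varphi(0)\right\vert )/(1-\left\vert \varphi(0)\right\vert )$.

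For the lower bound I would use the standard relation $C_{\varphi}^{\ast}K_{w}=K_{\varphi(w)}$, which is now a genuine identity since $C_{\varphi}$ is bounded, together with $\left\Vert K_{0}\right\Vert ^{2}=K_{0}(0)=a_{0}=1$. Testing $C_{\varphi}^{\ast}$ against $K_{0}$ gives
\[
\left\Vert C_{\varphi}\right\Vert ^{2}=\left\Vert C_{\varphi}^{\ast}\right\Vert ^{2}\geq\frac{\left\Vert C_{\varphi}^{\ast}K_{0}\right\Vert ^{2}}{\left\Vert K_{0}\right\Vert ^{2}}=\left\Vert K_{\varphi(0)}\right\Vert ^{2}=K_{\varphi(0)}(\varphi(0))=\frac{1}{\left\vert \varphi(0)\right\vert ^{2}}\ln\frac{1}{1-\left\vert \varphi(0)\right\vert ^{2}},
\]
where the last equality is the explicit diagonal value of the Dirichlet kernel. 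For the remaining case $\varphi(0)=0$, the upper bound already reads $\left\Vert C_{\varphi}\right\Vert ^{2}\leq1$, while $C_{\varphi}1=1$ and $\left\Vert 1\right\Vert _{D^{2}}=1$ force $\left\Vert C_{\varphi}\right\Vert \geq1$, so $\left\Vert C_{\varphi}\right\Vert =1$.

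I do not expect a real obstacle here; the argument is essentially an instantiation of Corollary \ref{normestimate} plus a one-line reproducing-kernel estimate. The only points worth a remark are consistency checks: the lower-bound quantity $\frac{1}{x}\ln\frac{1}{1-x}=1+\frac{x}{2}+\cdots$ is $\geq1$ on $(0,1)$, in agreement with the trivial bound $\left\Vert C_{\varphi}\right\Vert \geq\left\Vert C_{\varphi}1\right\Vert =1$, and it tends to $1$ as $\varphi(0)\to0$, matching the stated boundary value.
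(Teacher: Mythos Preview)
Your proposal is correct and follows essentially the same approach as the paper: the upper bound is inherited from Corollary \ref{normestimate} once you verify $a_{n}=1/(n+1)\leq1$, and the lower bound is the paper's one-line test of $C_{\varphi}^{\ast}$ against $K_{0}=1$. You are slightly more thorough than the paper in spelling out the $\varphi(0)=0$ case and the consistency checks, but the substance is identical.
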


\begin{proof}
The lower bound follows from $\left\Vert C_{\varphi}^{\ast}\right\Vert
^{2}\geq\left\Vert C_{\varphi}^{\ast}1\right\Vert ^{2}=\left\Vert C_{\varphi
}^{\ast}K_{0}\right\Vert ^{2}=\left\Vert K_{\varphi(0)}\right\Vert ^{2},$
where $K_{\varphi(0)}$ is the reproducing kernel of $D^{2}({\mathbb{D)}}$ as
in (\ref{kernel}).
\end{proof}

The condition $\left\Vert M_{\varphi}\right\Vert \leq1$ is in general only a
sufficient condition for $C_{\varphi}$ to be bounded as Example \ref{zknorm}
shows. In fact, if $\left\Vert M_{\varphi}\right\Vert \leq1,$ then
$C_{\varphi}$ on $S_{1}^{2}({\mathbb{D)}}$ is a Hilbert-Schmidt operator.
Hilbert spaces of holomorphic functions on the unit disk, such as $S_{1}%
^{2}({\mathbb{D)}}$, whose functions extend continuously to the closure of the
unit disk are referred to as boundary-regular spaces. Then, it follows from
Theorem 2.1 in \cite{Shapiro} that if $\varphi$ is a self-map of the disk
which lies in $S_{1}^{2}({\mathbb{D)}}$, then $C_{\varphi}$ is compact if and
only if $\left\Vert \varphi\right\Vert _{\infty}<1.$ We observe here that, as
in the Hardy space, the Bergman space and the Dirichlet space, if $\left\Vert
\varphi\right\Vert _{\infty}<1,$ then $C_{\varphi}$ on $S_{1}^{2}%
({\mathbb{D)}}$ is a Hilbert-Schmidt operator.

\begin{proposition}
\label{schmidt}Let $\varphi\in S_{1}^{2}({\mathbb{D)}}$ be such that
$\left\Vert \varphi\right\Vert _{\infty}<1$ or $\left\Vert M_{\varphi
}\right\Vert \leq1.$ Then $C_{\varphi}$ is a Hilbert-Schmidt operator on
$S_{1}^{2}({\mathbb{D)}}$.
\end{proposition}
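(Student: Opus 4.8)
The plan is to compute the Hilbert--Schmidt norm of $C_{\varphi}$ directly against the standard orthonormal basis of $S_{1}^{2}({\mathbb{D)}}$. Since the $S_{1}^{2}$-norm is the diagonal weighted $\ell^{2}$-norm $\|f\|^{2}=\sum_{n\geq0}\frac{(n+1)(n+2)}{2}|f_{n}|^{2}$ on Taylor coefficients, the functions $e_{n}(z)=z^{n}/\|z^{n}\|_{S_{1}^{2}}=\sqrt{2/((n+1)(n+2))}\,z^{n}$, $n\geq0$, form an orthonormal basis. As $C_{\varphi}e_{n}=\sqrt{2/((n+1)(n+2))}\,\varphi^{n}$, the Hilbert--Schmidt norm (whenever the series converges) equals
\[
\left\Vert C_{\varphi}\right\Vert _{HS}^{2}=\sum_{n\geq0}\frac{2}{(n+1)(n+2)}\left\Vert \varphi^{n}\right\Vert _{S_{1}^{2}}^{2}.
\]
So it suffices to bound $\|\varphi^{n}\|_{S_{1}^{2}}^{2}$ well enough that this series converges; finiteness of the series simultaneously shows $C_{\varphi}$ is bounded and Hilbert--Schmidt on $S_{1}^{2}({\mathbb{D)}}$.

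In the case $\|M_{\varphi}\|\leq1$ the estimate is immediate: $\varphi^{n}=M_{\varphi}^{n}1$, hence $\|\varphi^{n}\|_{S_{1}^{2}}\leq\|M_{\varphi}\|^{n}\|1\|_{S_{1}^{2}}\leq1$ because $\|1\|_{S_{1}^{2}}^{2}=(1\cdot2)/2=1$, and therefore
\[
\left\Vert C_{\varphi}\right\Vert _{HS}^{2}\leq\sum_{n\geq0}\frac{2}{(n+1)(n+2)}=2\sum_{n\geq0}\left(\frac{1}{n+1}-\frac{1}{n+2}\right)=2<\infty.
\]
In the case $\|\varphi\|_{\infty}<1$ I will estimate $\|\varphi^{n}\|_{S_{1}^{2}}^{2}=\|\varphi^{n}\|_{H^{2}}^{2}+\frac{3}{2}\|(\varphi^{n})^{\prime}\|_{A^{2}}^{2}+\frac{1}{2}\|(\varphi^{n})^{\prime}\|_{H^{2}}^{2}$ term by term, using $(\varphi^{n})^{\prime}=n\varphi^{n-1}\varphi^{\prime}$ together with $\|\varphi^{n-1}g\|_{H^{2}}\leq\|\varphi\|_{\infty}^{n-1}\|g\|_{H^{2}}$ and the analogous bound in the $A^{2}$-norm. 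Since $\varphi\in S_{1}^{2}({\mathbb{D)}}$ has $\varphi^{\prime}\in H^{2}({\mathbb{D)}}\subseteq A^{2}({\mathbb{D)}}$, this gives a bound of the form $\|\varphi^{n}\|_{S_{1}^{2}}^{2}\leq C\,n^{2}\|\varphi\|_{\infty}^{2(n-1)}$ for $n\geq1$, with $C=\max\{\|\varphi\|_{\infty}^{2},\ \frac{3}{2}\|\varphi^{\prime}\|_{A^{2}}^{2}+\frac{1}{2}\|\varphi^{\prime}\|_{H^{2}}^{2}\}$ (the $n=0$ term contributing $1$). Plugging this in,
\[
\left\Vert C_{\varphi}\right\Vert _{HS}^{2}\leq1+\sum_{n\geq1}\frac{2}{(n+1)(n+2)}\,C\,n^{2}\|\varphi\|_{\infty}^{2(n-1)}\leq1+2C\sum_{n\geq1}\|\varphi\|_{\infty}^{2(n-1)}<\infty,
\]
the geometric series converging since $\|\varphi\|_{\infty}<1$.

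There is no serious obstacle; the only point requiring care is the bound on $\|\varphi^{n}\|_{S_{1}^{2}}$ in the second case, where one must use that membership of $\varphi$ in $S_{1}^{2}({\mathbb{D)}}$ forces $\varphi^{\prime}\in H^{2}$ so that the right-hand sides above are finite (alternatively, invoke that $S_{1}^{2}({\mathbb{D)}}$ is an algebra, Theorem \ref{upperbound}, to know a priori that $\varphi^{n}\in S_{1}^{2}({\mathbb{D)}}$). One should also note at the outset that $\|\varphi\|_{\infty}<1$ or $\|M_{\varphi}\|\leq1$ forces $\varphi$ to be a self-map of the disk, so that $C_{\varphi}f=f\circ\varphi$ is a well-defined analytic function on ${\mathbb{D}}$ and the above computation is legitimate.
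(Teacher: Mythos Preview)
Your proof is correct and follows the same route as the paper's: compute the Hilbert--Schmidt norm against the orthonormal basis $\{z^{n}/\|z^{n}\|\}$, bound $\|\varphi^{n}\|_{S_{1}^{2}}^{2}\leq n^{2}\|\varphi\|_{\infty}^{2(n-1)}\|\varphi\|_{S_{1}^{2}}^{2}$ in the first case and $\|\varphi^{n}\|_{S_{1}^{2}}^{2}\leq\|M_{\varphi}\|^{2n}\leq1$ in the second, then sum. The only slip is that your explicit constant $C$ should be the sum $\|\varphi\|_{\infty}^{2}+\tfrac{3}{2}\|\varphi'\|_{A^{2}}^{2}+\tfrac{1}{2}\|\varphi'\|_{H^{2}}^{2}$ rather than the maximum of the two quantities, which is immaterial to the argument.
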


\begin{proof}
Assume $\left\Vert \varphi\right\Vert _{\infty}<1.$ Note that for $n\geq1, $
\begin{align*}
\left\Vert \varphi^{n}\right\Vert _{S_{1}^{2}}^{2} &  =\left\Vert \varphi
^{n}\right\Vert _{H^{2}}^{2}+\frac{3}{2}\left\Vert n\varphi^{n-1}%
\varphi^{\prime}\right\Vert _{A^{2}}^{2}+\frac{1}{2}\left\Vert n\varphi
^{n-1}\varphi^{\prime}\right\Vert _{H^{2}}^{2}\\
&  \leq n^{2}\left\Vert \varphi\right\Vert _{\infty}^{2(n-1)}\left(
\left\Vert \varphi\right\Vert _{H^{2}}^{2}+\frac{3}{2}\left\Vert
\varphi^{\prime}\right\Vert _{A^{2}}^{2}+\frac{1}{2}\left\Vert \varphi
^{\prime}\right\Vert _{H^{2}}^{2}\right)  =n^{2}\left\Vert \varphi\right\Vert
_{\infty}^{2(n-1)}\left\Vert \varphi\right\Vert _{S_{1}^{2}}^{2}.
\end{align*}
Therefore
\begin{align*}
{\sum_{n\geq0}}\left\Vert C_{\varphi}\frac{z^{n}}{\left\Vert z^{n}\right\Vert
}\right\Vert ^{2} &  =1+{\sum_{n\geq1}}\frac{\left\Vert \varphi^{n}\right\Vert
^{2}}{\left\Vert z^{n}\right\Vert ^{2}}\leq1+{\sum_{n\geq1}}\frac
{2n^{2}\left\Vert \varphi\right\Vert _{S_{1}^{2}}^{2}}{(n+1)(n+2)}\left\Vert
\varphi\right\Vert _{\infty}^{2(n-1)}\\
&  \leq1+2\left\Vert \varphi\right\Vert _{S_{1}^{2}}^{2}/\left(  1-\left\Vert
\varphi\right\Vert _{\infty}^{2}\right)  .
\end{align*}
In the case $\left\Vert M_{\varphi}\right\Vert \leq1,$ the result follows by
noting that $\left\Vert \varphi^{n}\right\Vert ^{2}=\left\Vert M_{\varphi^{n}%
}1\right\Vert ^{2}\leq\left\Vert M_{\varphi}\right\Vert ^{2n}\leq1$.
\end{proof}

Now we can show that for $\varphi\in S_{1}^{2}({\mathbb{D}})$, $\|\varphi
\|_{\infty}=\|M_{f}\|$ if and only if $\varphi$ is a constant. Recall that for
a bounded operator $T$ on a Hilbert space $H$, $T$ is called hyponormal if
$T^{*}T-TT^{*}\geq0$. If $T$ is hyponormal, then the norm of $T$ equals the
spectral radius of $T$, i.e. $\|T\|=r(T)$ (\cite{C91}).

\begin{corollary}
Assume $\varphi\in S_{1}^{2}({\mathbb{D}})$ is not a constant. Then
$\|\varphi\|_{\infty}<\|M_{\varphi}\|$. Consequently, $M_{\varphi}$ is hyponormal on
$S_{1}^{2}({\mathbb{D}})$ if and only if $\varphi$ is a constant.
\end{corollary}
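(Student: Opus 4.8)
The plan is to establish the contrapositive of the first assertion: if $\varphi \in S_1^2(\mathbb{D})$ is not a constant, then $\|M_\varphi\| > \|\varphi\|_\infty$. Since the reproducing kernels $K_w$ are eigenvectors of $M_\varphi^\ast$ with $M_\varphi^\ast K_w = \overline{\varphi(w)} K_w$, one always has $\|M_\varphi\| \ge \|\varphi\|_\infty$, so it suffices to exclude equality. Suppose, for contradiction, that $\|M_\varphi\| = \|\varphi\|_\infty$; scaling away this common value (the case $\varphi \equiv 0$ being trivial) we may assume $\|\varphi\|_\infty = 1$ and $\|M_\varphi\| \le 1$. As recalled in Section 5, the latter is equivalent to positive semi-definiteness on $\mathbb{D} \times \mathbb{D}$ of the de Branges--Rovnyak kernel $K_w^\varphi(z) = (1 - \overline{\varphi(w)}\varphi(z))K_w(z)$.

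The decisive step is to transport this positivity to the boundary, which is exactly where the two special features of $S_1^2(\mathbb{D})$ enter. Because $S_1^2(\mathbb{D}) \subseteq A(\mathbb{D})$, the map $\varphi$ extends continuously to $\overline{\mathbb{D}}$; because the kernel coefficients $2/[(n+1)(n+2)]$ are summable, the series $K_w(z) = \sum_{n\ge 0} \frac{2}{(n+1)(n+2)}(\overline{w} z)^n$ converges uniformly on $\overline{\mathbb{D}}\times\overline{\mathbb{D}}$, so $K$ is continuous there. As $\varphi$ is non-constant, the maximum modulus principle forces $|\varphi|$ to attain its supremum $1$ at some $\zeta_0 \in \mathbb{T}$. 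Take interior points $\zeta_0^{(k)} \to \zeta_0$ and fix an arbitrary $z \in \mathbb{D}$: the $2\times 2$ matrices formed from the kernel $K^\varphi$ on the two-point set $\{\zeta_0^{(k)}, z\}$ are positive semi-definite, and letting $k \to \infty$ (using continuity of $\varphi$ on $\overline{\mathbb{D}}$ and of $K$ on $\overline{\mathbb{D}}\times\overline{\mathbb{D}}$) produces a positive semi-definite $2 \times 2$ matrix whose first diagonal entry is $K_{\zeta_0}^\varphi(\zeta_0) = (1 - |\varphi(\zeta_0)|^2)K_{\zeta_0}(\zeta_0) = 0$. Hence its off-diagonal entry $K_{\zeta_0}^\varphi(z) = (1 - \overline{\varphi(\zeta_0)}\varphi(z))K_{\zeta_0}(z)$ vanishes. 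Since this holds for every $z \in \mathbb{D}$ and $z \mapsto K_{\zeta_0}(z)$ is analytic with $K_{\zeta_0}(0) = 1 \ne 0$, it is not identically zero, so $1 - \overline{\varphi(\zeta_0)}\varphi(z) \equiv 0$ by the identity theorem; as $|\varphi(\zeta_0)| = 1$ this forces $\varphi \equiv \varphi(\zeta_0)$ to be constant, a contradiction. Thus $\|\varphi\|_\infty < \|M_\varphi\|$.

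For the stated equivalence, if $\varphi$ is constant then $M_\varphi$ is a scalar multiple of the identity, hence normal and in particular hyponormal. Conversely, if $M_\varphi$ is hyponormal then $\|M_\varphi\| = r(M_\varphi)$, its spectral radius; by (\ref{spectrum}) --- which remains valid on $S_1^2(\mathbb{D})$, this space being norm-equivalent to $D_2$ and the spectrum being an invariant of equivalent norms --- we have $\sigma(M_\varphi) = \varphi(\overline{\mathbb{D}})$, so $r(M_\varphi) = \max_{z \in \overline{\mathbb{D}}}|\varphi(z)| = \|\varphi\|_\infty$ by continuity of $\varphi$ on $\overline{\mathbb{D}}$. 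Hence $\|M_\varphi\| = \|\varphi\|_\infty$, and the first part forces $\varphi$ to be constant.

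The step I expect to be delicate is the boundary limit in the second paragraph: one must be careful that positive semi-definiteness of $K_w^\varphi$, known a priori only on the open disk, genuinely survives passage to the boundary point $\zeta_0$. This is exactly where membership in the disk algebra and the absolute convergence of the reproducing kernel up to $\overline{\mathbb{D}}$ are used essentially; for a space such as the Dirichlet space, where the kernel blows up at the boundary, this device is unavailable --- consistent with the statement being specific to $S_1^2(\mathbb{D})$.
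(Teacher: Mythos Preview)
Your argument is correct, and it takes a genuinely different route from the paper's. The paper normalises to $\psi=\varphi/\|M_\varphi\|$ so that $\|M_\psi\|=1$, then invokes Proposition~\ref{schmidt} (which uses $\|\psi^n\|\le 1$ together with the summability of $1/\|z^n\|^2$) to conclude that $C_\psi$ is Hilbert--Schmidt, hence compact; finally it appeals to Shapiro's theorem on boundary-regular spaces \cite{Shapiro} to deduce $\|\psi\|_\infty<1$. Your proof bypasses composition operators entirely: you translate $\|M_\varphi\|\le 1$ into positivity of the de Branges--Rovnyak kernel $K_w^\varphi$, push this positivity to a boundary point $\zeta_0\in\mathbb{T}$ where $|\varphi|$ attains its maximum (using that both $\varphi$ and $K$ extend continuously to $\overline{\mathbb{D}}$), and read off constancy of $\varphi$ from the degeneracy of the resulting $2\times 2$ Pick matrix. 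Both arguments ultimately rest on the same two features of $S_1^2(\mathbb{D})$---membership in the disk algebra and summability of the kernel coefficients---but your approach is more self-contained (no external citation to Shapiro) and isolates the obstruction more transparently. The paper's route, on the other hand, fits naturally into the composition-operator theme of Section~5 and yields the Hilbert--Schmidt conclusion as a byproduct. The hyponormality deduction via $r(M_\varphi)=\|\varphi\|_\infty$ is identical in both proofs.
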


\begin{proof}
Let $\psi=\frac{\varphi}{\|M_{\varphi}\|}$, then $\|M_{\psi}\|=1$. Thus by
Proposition \ref{schmidt}, $C_{\psi}$ is a Hilbert-Schmidt operator. It then
follows from \cite[Theorem 2.1]{Shapiro} that $\|\psi\|_{\infty}<1$. So
$\|\varphi\|_{\infty}<\|M_{\varphi}\|$. Recall from (\ref{spectrum}) that
$\sigma(M_{\varphi})=\varphi(\overline{{\mathbb{D}}})$, hence $r(M_{\varphi
})=\|\varphi\|_{\infty}$. If $M_{\varphi}$ is hyponormal, then $\|M_{\varphi
}\|=r(M_{\varphi})=\|\varphi\|_{\infty}$, therefore $\varphi$ is a constant.
If $\varphi$ is a constant, we have $M_{\varphi}$ is hyponormal.
\end{proof}

\begin{corollary}
Let $\varphi\in S_{1}^{2}({\mathbb{D)}}$ be such that $\left\Vert
\varphi\right\Vert _{S_{1}^{2}}\leq1/(2\sqrt{2}).$ Then $C_{\varphi}$ is a
Hilbert-Schmidt operator on $S_{1}^{2}({\mathbb{D)}}$ and
\[
\left\Vert C_{\varphi}\right\Vert ^{2}\leq\frac{1+\left\vert \varphi
(0)\right\vert }{1-\left\vert \varphi(0)\right\vert }.
\]

\end{corollary}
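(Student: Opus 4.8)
The plan is to observe that the hypothesis $\left\Vert\varphi\right\Vert_{S_{1}^{2}}\le 1/(2\sqrt{2})$ is precisely what is needed to reduce this corollary to results already established, namely Theorem \ref{norm}, Proposition \ref{schmidt}, and Corollary \ref{normestimate}. First I would apply the upper bound in Theorem \ref{norm}: since $\left\Vert M_{\varphi}\right\Vert\le 2\sqrt{2}\left\Vert\varphi\right\Vert_{S_{1}^{2}}$, the assumption gives $\left\Vert M_{\varphi}\right\Vert\le 1$. In particular $\varphi$ is a self-map of ${\mathbb{D}}$, so $C_{\varphi}$ is well defined on $S_{1}^{2}({\mathbb{D}})$, and we are now in the situation $\left\Vert M_{\varphi}\right\Vert\le 1$ covered by the earlier results.

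Next, with $\left\Vert M_{\varphi}\right\Vert\le 1$ in hand, Proposition \ref{schmidt} immediately yields that $C_{\varphi}$ is a Hilbert--Schmidt operator on $S_{1}^{2}({\mathbb{D}})$, which is the first assertion. For the norm estimate I would invoke Corollary \ref{normestimate} with $H_{K}=S_{1}^{2}({\mathbb{D}})$; the only thing to check is that its hypothesis on the kernel coefficients holds, i.e.\ that $a_{n}=1/\left\Vert z^{n}\right\Vert_{S_{1}^{2}}^{2}=2/[(n+1)(n+2)]\le 1$ for all $n\ge 0$. This is clear, since $a_{0}=1$ and the sequence $a_{n}$ is strictly decreasing in $n$. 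Hence Corollary \ref{normestimate} applies and gives $\left\Vert C_{\varphi}\right\Vert^{2}\le (1+\left\vert\varphi(0)\right\vert)/(1-\left\vert\varphi(0)\right\vert)$, completing the proof.

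There is essentially no obstacle here beyond bookkeeping: the content is entirely contained in the three cited results, and the role of the numerical constant $1/(2\sqrt{2})$ is simply to force $\left\Vert M_{\varphi}\right\Vert\le 1$ via the sharp multiplication bound of Theorem \ref{norm}. If one wanted to be careful, the single point worth a sentence is the verification $a_{n}\le 1$, which is exactly the condition under which Corollary \ref{normestimate} was stated, so even that is routine.
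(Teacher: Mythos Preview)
Your proposal is correct and follows exactly the paper's own proof: apply Theorem \ref{norm} to get $\left\Vert M_{\varphi}\right\Vert\le 1$, then invoke Proposition \ref{schmidt} for the Hilbert--Schmidt conclusion and Corollary \ref{normestimate} for the norm bound. Your explicit check that $a_{n}=2/[(n+1)(n+2)]\le 1$ is a welcome bit of care that the paper leaves implicit, but otherwise the arguments are identical.
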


\begin{proof}
Assume $\left\Vert \varphi\right\Vert _{S_{1}^{2}}\leq1/(2\sqrt{2}).$ By
Theorem \ref{norm}, $\left\Vert M_{\varphi}\right\Vert \leq1.$ So $C_{\varphi
}$ is a Hilbert-Schmidt operator. The upper bound for $\left\Vert C_{\varphi
}\right\Vert $ follows from Corollary \ref{normestimate}.
\end{proof}

\section{Higher order derivative Hardy spaces}

Using high order derivatives, for a positive integer $m,$ we may define the
following spaces,
\[
K_{m}^{2}({\mathbb{D)}}{\mathbb{=}}\left\{  f\in H({\mathbb{D)}}:\left\Vert
f\right\Vert _{K_{m}^{2}}^{2}={\sum_{n\geq0}}\frac{\left(  n+1\right)
\cdots(n+m+1)}{(m+1)!}\left\vert f_{n}\right\vert ^{2}<\infty\right\}  .
\]
Then $K_{m}^{2}({\mathbb{D)}}$ is norm equivalent to $D_{\alpha}$ with
$\alpha=m+1.$ These spaces are related to the Hardy space and the Bergman
space in the following way: for $m=2k-1$, $K_{m}^{2}({\mathbb{D)}}$ is norm
equivalent to the space
\[
\left\{  f\in H({\mathbb{D)}}:\left\Vert f\right\Vert ^{2}=|f(0)|^{2}%
+\left\Vert f^{(k)}\right\Vert _{H^{2}}^{2}<\infty\right\}  ,
\]
where $f^{(k)}(z)$ is the $k$-th derivative of $f(z),$ for $m=2k-2$,
$K_{m}^{2}({\mathbb{D)}}$ is norm equivalent to the space
\[
\left\{  f\in H({\mathbb{D)}}:\left\Vert f\right\Vert ^{2}=|f(0)|^{2}%
+\left\Vert f^{(k)}\right\Vert _{A^{2}}^{2}<\infty\right\}  .
\]
Again the reproducing kernel of $K_{m}^{2}({\mathbb{D)}}$ has a closed form by
using logarithmic functions, is a complete Nevalinna-Pick kernel, and $M_{z}$
on $K_{m}^{2}({\mathbb{D)}}$ is an $(m+2)$-isometry. In fact the following
theorem holds.

\begin{theorem}
Assume $\psi$ is not a constant. Then $M_{\psi}$ on $K_{m}^{2}({\mathbb{D)}}$
is a $k$-isometry for some $k\geq1$ if and only if $k=m+2$ and $\psi$ is a
finite Blaschke product.
\end{theorem}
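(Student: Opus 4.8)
The plan is to follow the proof of Theorem~\ref{lessone} almost verbatim, since that is essentially the case $m=1$ (note that $K_1^2({\mathbb D})$ carries the same weights $\tfrac{(n+1)(n+2)}{2}$ as $S_1^2({\mathbb D})$); only the weighted-shift computation and one norm comparison change.

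For the ``if'' direction I would first observe that $M_z z^n=z^{n+1}$ makes $M_z$ a unilateral weighted shift on $K_m^2({\mathbb D})$ with
\[
w_n^2=\frac{\|z^{n+1}\|_{K_m^2}^2}{\|z^n\|_{K_m^2}^2}=\frac{(n+2)(n+3)\cdots(n+m+2)}{(n+1)(n+2)\cdots(n+m+1)}=\frac{n+m+2}{n+1},\qquad n\geq0.
\]
Putting $P(x)=(x+1)(x+2)\cdots(x+m+1)$, a polynomial of degree $m+1$ with $P(n)>0$ for $n\geq0$ and $w_n^2=P(n+1)/P(n)$, Lemma~\ref{polycoro} shows $M_z$ is a strict $(m+2)$-isometry. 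Exactly as for $S_1^2({\mathbb D})$ one has $\sigma(M_z)=\overline{\mathbb D}$ and $\ker(M_z^\ast-\overline{\lambda}I)=\mathrm{span}\{K_\lambda\}$ for $\lambda\in{\mathbb D}$, so for a finite Blaschke product $\theta$ the functional calculus gives $\theta(M_z)=M_\theta$, and Lemma~\ref{finite} then yields that $M_\theta$ is a strict $(m+2)$-isometry on $K_m^2({\mathbb D})$.

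For the ``only if'' direction, assume $\psi\in K_m^2({\mathbb D})$ is non-constant with $M_\psi$ a $k$-isometry. Since $K_m^2({\mathbb D})$ is norm equivalent to $D_{m+1}$ and $m+1>1$, formula~(\ref{spectrum}) gives $\sigma(M_\psi)=\psi(\overline{\mathbb D})$, which (as $K_m^2({\mathbb D})\subseteq S_1^2({\mathbb D})\subseteq A({\mathbb D})$ and $\psi$ is non-constant) has nonempty interior; Lemma~1.21 of~\cite{AS} then forces $\sigma(M_\psi)=\overline{\mathbb D}$. I would then run the semi-Fredholm-index argument of Theorem~\ref{lessone}: for $\lambda\in{\mathbb D}$, $M_\psi-\lambda$ is left semi-Fredholm with $\mathrm{ind}(M_\psi-\lambda)=\mathrm{ind}\,M_\psi=-\dim[K_m^2({\mathbb D})\ominus\psi K_m^2({\mathbb D})]\leq0$, so $\sigma_e(M_\psi)=\psi({\mathbb T})$ either lies in ${\mathbb T}$ or contains ${\mathbb D}$. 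Because $K_m^2({\mathbb D})\subseteq S_1^2({\mathbb D})$ forces $\psi'\in H^2({\mathbb D})$, the case $\psi({\mathbb T})=\overline{\mathbb D}$ is impossible, so $\psi({\mathbb T})\subseteq{\mathbb T}$; the maximum modulus principle upgrades this to $\psi({\mathbb T})={\mathbb T}$, whence $\psi$ is a finite Blaschke product. The first paragraph then makes $M_\psi$ a strict $(m+2)$-isometry, and since a strict $(m+2)$-isometry is a $k$-isometry exactly for $k\geq m+2$, reading the statement in the strict sense (as in Theorem~\ref{lessone}) gives $k=m+2$.

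I do not expect a serious obstacle here: every tool is already in place. The only genuinely new inputs are the weight identity $w_n^2=(n+m+2)/(n+1)$ and the elementary norm comparison $K_m^2({\mathbb D})\subseteq S_1^2({\mathbb D})$ (the $K_m^2$ weights dominate a constant multiple of the $S_1^2$ weights for $m\geq1$), which is what lets me recycle ``$\psi'\in H^2$.'' The one point to state carefully is that ``$k=m+2$'' refers to the strict isometry order, since any finite Blaschke product multiplier is automatically a $k$-isometry for every $k\geq m+2$.
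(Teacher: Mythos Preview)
The paper does not actually write out a proof of this theorem; it is stated in Section~6 as a direct extension, preceded only by the remark that $M_z$ on $K_m^2(\mathbb{D})$ is an $(m+2)$-isometry. Your proposal fills in precisely the argument the paper is implicitly invoking: compute the weights to see $M_z$ is a strict $(m+2)$-isometry via Lemma~\ref{polycoro} with $P(x)=(x+1)\cdots(x+m+1)$, apply Lemma~\ref{finite} for the ``if'' direction, and rerun the spectral/semi-Fredholm argument of Theorem~\ref{lessone} for the ``only if'' direction, using $K_m^2(\mathbb{D})\subseteq S_1^2(\mathbb{D})$ to import $\psi'\in H^2$. Your caveat about the strict reading of ``$k=m+2$'' is appropriate and matches how Theorem~\ref{lessone} is phrased.
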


Furthermore, Corollary \ref{normestimate} is also valid on $K_{m}%
^{2}({\mathbb{D)}}$.

\bigskip\bigskip

Caixing Gu

Department of Mathematics, California Polytechnic State University,

San Luis O-bispo, CA 93407, USA

E-mail address: cgu@calpoly.edu \bigskip\bigskip

Shuaibing Luo

College of Mathematics and Econometrics, Hunan University,

Changsha, Hunan, 410082, PR China

E-mail address: shuailuo2@126.com \bigskip\bigskip

Mathematics Subject Classification (2010). Primary 47B32, 47B33, 47B35,
Secondary 30J10, 30H10, 30H50.

Keywords. Derivative Hardy space, composition operator, multipication
operator, $m$-isometry, reducing subspace, Nevalinna-Pick kernel.

\end{document}